\numberwithin{equation}{section}
\newtheorem{defi}{Definition}[section]
\newtheorem{thm}[defi]{Theorem}
\newtheorem{lemma}[defi]{Lemma}
\newtheorem{ass}[defi]{Assumption}
\newtheorem{cor}[defi]{Corollary}
\newtheorem{rem}[defi]{Remark}
\numberwithin{equation}{section}
\renewcommand{\div}{\operatorname{div}}
\newcommand{\e}{\varepsilon}
\newcommand{\tx}{(t,x)}
\newcommand{\txy}{(t,x,y)}
\newcommand{\txsy}{(t,x,s,y)}
\newcommand{\txs}{(t,x,s)}
\newcommand{\txe}{\left(t,x,\tfrac{t}{\e},\tfrac{x}{\e}\right)}
\newcommand{\Oe}{{\Omega_\e}}
\newcommand{\Oes}{{\Omega_\e^\mathrm{s}}}
\newcommand{\Ge}{{\Gamma_\e}}
\newcommand{\Le}{\Lambda_\e}
\newcommand{\OeT}{{\Omega_\e^T}}
\newcommand{\OesT}{{\Oes^T}}
\newcommand{\Gtxs}{{\Gamma(t,x,s)}}
\newcommand{\psie}{\psi_\e}
\newcommand{\psiem}{\psi_\e^{-1}}
\newcommand{\Pe}{\Psi_\e}
\newcommand{\Je}{J_\e}
\newcommand{\Ae}{A_\e}
\newcommand{\Adj}{\operatorname{Adj}}
\newcommand{\Jem}{\Je^{-1}}
\newcommand{\Pem}{\Pe^{-1}}
\newcommand{\PemT}{\Pe^{-\top}}
\newcommand{\psin}{\psi_0}
\newcommand{\psinm}{\psi_0^{-1}}
\newcommand{\Pn}{\Psi_0}
\newcommand{\Pnm}{\Psi_0^{-1}}
\newcommand{\PnmT}{\Psi_0^{-\top}}
\newcommand{\Jn}{J_0}
\newcommand{\Jnm}{J_0^{-1}}
\newcommand{\An}{A_0}
\newcommand{\Anm}{A_0^{-1}}
\newcommand{\Aem}{A_\e^{-1}}
\newcommand{\PnT}{\Psi_0^{\top}}
\newcommand{\init}{\mathrm{in}}
\newcommand{\ue}{u_\e}
\newcommand{\ve}{v_\e}
\newcommand{\uein}{u_\e^\init}
\newcommand{\hue}{\hat{u}_\e}
\newcommand{\huein}{\hat{u}_\e^\init}
\newcommand{\fe}{f_\e}
\renewcommand{\ge}{g_\e}
\newcommand{\vGe}{v_\Ge}
\newcommand{\fn}{f_0}
\newcommand{\gn}{g_0}
\newcommand{\vGn}{v_{\Gamma}}
\newcommand{\hfe}{\hat{f}_\e}
\newcommand{\hge}{\hat{g}_\e}
\newcommand{\hvGe}{\hat{v}_\Ge}
\newcommand{\hfn}{\hat{f}_0}
\newcommand{\hgn}{\hat{g}_0}
\newcommand{\hvGn}{\hat{v}_{\Gamma}}
\newcommand{\vn}{v_0}
\newcommand{\wn}{w_0}
\newcommand{\un}{u_0}
\newcommand{\uo}{u_1}
\newcommand{\unin}{u_0^\init}
\newcommand{\hun}{\hat{u}_0}
\newcommand{\huo}{\hat{u}_1}
\newcommand{\hunin}{\hat{u}_0^\init}
\newcommand{\dd}{\, \mathrm{d}}
\newcommand{\dt}{\dd t}
\newcommand{\dx}{\dd x}
\newcommand{\ds}{\dd s}
\newcommand{\dy}{\dd y}
\newcommand{\xy}{(x,y)}
\newcommand{\dxt}{\dx \dt}
\newcommand{\dys}{\dy \ds}
\newcommand{\dyx}{\dy \dx}
\newcommand{\dsxt}{\ds \dx \dt}
\newcommand{\dysxt}{\dy \ds \dx \dt}
\newcommand{\Ee}{E_\e}
\newcommand{\N}{\mathbb{N}}
\newcommand{\Z}{\mathbb{Z}}
\newcommand{\R}{\mathbb{R}}
\newcommand{\Yp}{Y^*}
\newcommand{\Ypp}{{Y^*_\#}}
\newcommand{\Yptxs}{Y^*(t,x,s)}
\newcommand{\Ys}{{Y^\mathrm{s}}}
\newcommand{\Ysp}{{Y^\mathrm{s}_\#}}
\newcommand{\Ystxs}{Y^\mathrm{s}(t,x,s)}
\newcommand{\intT}{\int\limits_0^T}
\newcommand{\intS}{\int\limits_S}
\newcommand{\intO}{\int\limits_\Omega}
\newcommand{\intY}{\int\limits_Y}
\newcommand{\intYp}{\int\limits_{\Yp}}
\newcommand{\intG}{\int\limits_{\Gamma}}
\newcommand{\intSG}{\intS \intG}
\newcommand{\intYptxs}{\int\limits_{\Yptxs}}
\newcommand{\intGtxs}{\int\limits_{\Gtxs}}
\newcommand{\intOe}{\int\limits_\Oe}
\newcommand{\intTOe}{\intT \intOe}
\newcommand{\intTO}{\intT \intO}
\newcommand{\intOYp}{\intO \intYp}
\newcommand{\intTOS}{\intT \intO \intS}
\newcommand{\intTOSYp}{\intT \intO \intS \intYp}
\newcommand{\intTOSY}{\intT \intO \intS \intY}
\newcommand{\intTOSG}{\intT \intO \intS \intG}
\newcommand{\intSYp}{\intS \intYp}
\newcommand{\intTOSYptxs}{\intT \intO \intS \intYptxs}
\renewcommand{\hom}{\mathrm{hom}}
\newcommand{\Ke}{K_\e}
\newcommand{\one}[1]{{\bf{1}}_{#1}}
\newcommand{\1}{{\mathbbm{1}}}
\newcommand{\tss}[1]{\xrightarrow[]{\makebox[0.5cm]{$#1$}}	\hspace{-0.7cm}\xrightarrow[]{\makebox[0.6cm]{}}}
\newcommand{\tsw}[1]{{\xrightharpoonup[]{\makebox[0.5cm]{$#1$}}\hspace{-0.7cm}\xrightharpoonup[]{\makebox[0.6cm]{}}}}
\title[Effective advection induced by oscillating microstructure]{An effective advection induced by oscillating microstructure in a diffusion equation}
\author{David Wiedemann}
\address{Department of Mathematics\\
	Technical University of Dortmund, 44227 Dortmund, Germany.}
\email{david.wiedemann@math.tu-dortmund.de}
\subjclass[2020]{35B27, 80M40, 35R37}
\keywords{Homogenisation, oscillating microstructure, advection, diffusion equation, two-scale convergence}
\begin{document}
	\begin{abstract}
We consider the homogenisation of a diffusion equation in a porous medium.
The microstructure is time-dependent and oscillating on a small time scale.
This oscillation causes a novel advection in the homogenised equations. 
Allowing for a locally varying geometry, the oscillating microstructure demonstrates the ability to generate arbitrary and locally varying advection velocity fields.
	\end{abstract}
\maketitle
\section{Introduction}	
Transport of solutes in heterogeneous media is critical in various scientific domains like porous media flow, biological systems and chemical engineering. In materials with evolving microstructures, such as those undergoing pulsation or periodic deformation, the traditional diffusion equation may not fully capture the dynamics. Therefore, incorporating advective transport terms due to microstructural pulsation is crucial for accurately describing solute transport.

\subsection*{Goal of this work}
In this paper, we homogenise a diffusion equation within a porous medium featuring a time-pulsating pore structure. This dynamic microstructure induces a novel advection in the resulting homogenised equation, forming an advection--diffusion equation. The effective diffusivity and advection velocity are determined through the solutions of cell problems.
Specifically, we also examine scenarios where the pulsation of the microstructure varies across macroscopic space and time, rather than remaining constant. As a result, an effective advection velocity emerges, which varies both spatially and temporally. This presents an opportunity for inducing and actively controlling advective transport locally through modulation of the microstructure pulsation.

We consider a macroscopic domain $\Omega \subset \R^d$ for $d \geq 2$ on a macroscopic time interval $(0,T)$. The porous medium consists on two time-dependent phases $\Oe(t)$ and $\Oes(t)$ for $t \in (0,T)$. The domain $\Oe(t)$ with unit outer normal vector $\nu$ represents the in time and space oscillating pore space at time $t \in (0,T)$ where the diffusion happens and $\Oes(t)$ is the complementary solid space. The time pulsation interval shares the same scaling as the spatial periodicity cell, both being of the order of $\e$.
We represent their evolving interface as $\Ge(t)$ and denote the velocity of this boundary $\Ge(t)$ by $\vGe(t)$. We denote the remaining part of the boundary $\Oe(t)$, which is located at the boundary of $\Omega$ by $\Le(t)$. A precise formulation of the domain and the assumptions is given in Section~\ref{sec:MicroModel}.
For given initial values $\uein$ and source terms $\fe$ and $\ge$, we consider the homogenisation of
\begin{subequations}\label{eq:StrongForm}
	\begin{align}\label{eq:StrongForm:1}
		\partial_t u_\e - \div(D\nabla u_\e) &= \fe && \textrm{ in } \Oe(t) \, ,
		\\\label{eq:StrongForm:2}
		(- D \nabla u_\e - v_{\Ge} u ) \cdot \nu &= \e \ge && \textrm{ on }\Ge(t) \, ,
		\\\label{eq:StrongForm:3}
		(- D \nabla u_\e - v_{\Ge} u ) \cdot \nu &= 0 && \textrm{ on }\Le(t) \, ,
		\\\label{eq:StrongForm:4}
		\ue(0) &= \uein &&\textrm{ in } \Oe(0) \, ,
	\end{align}
\end{subequations}
where $\nu$ denotes the unit outer normal vector of $\Oe(t)$.
The term $-v_{\Ge} u \cdot \nu$ on the left-hand side of \eqref{eq:StrongForm:2} and \eqref{eq:StrongForm:3} accounts for the influence of the moving boundary and ensures mass conservation within $\Oe(t)$. In the case one models a solute diffusion, this term ensures that the solute at the interface moves along with the interface itself. The change in solute concentration by transport across the moving boundary $\Ge(t)$ is entirely determined by $\ge$. Specifically, if $\ge =0$, there is no exchange of solute at the interface, regardless of the movement of the domain.

\subsection*{Homogenisation result}
We show that the extension by zero of $\ue$ converges weakly to $u$, which is the unique solution of the following advection--diffusion equation:
\begin{subequations}\label{eq:Homogenised}
	\begin{align}
		\partial_t u -\div(D_\hom \nabla u - (W_\hom + V_\hom) u ) &= F + G && \textrm{in }(0,T) \times \Omega \, ,
		\\
		({D}_\hom \nabla u - ({W}_\hom + {V}_\hom)u)\cdot n & = 0 && \textrm{on }(0,T) \times \partial \Omega \, ,
		\\
		u(0) &= u^{\init} && \textrm{in } \Omega \, ,
	\end{align}
\end{subequations}
where $D_\hom$ denotes the effective diffusivity and $W_\hom$ and $V_\hom$ denote two advective transport velocities. Formulas for these coefficients are given in \eqref{eq:HomogenisedCoefficient} by means of the solutions of the cell problems \eqref{eq:CellProblem:I} and \eqref{eq:CellProblem:0}.
The bulk source terms $F$ and $G$ originate from the bulk source term $\fe$ and the source term $\e \ge$ at the interface, respectively.

\subsection*{Origin of the advection}
The homogenised equations \eqref{eq:Homogenised} contain the advection velocities $W_\hom$ and $V_\hom$. The term $W_\hom$ arises only for macroscopically modulated microstructures and vanishes if the porosity is macroscopically constant. It originates from a reformulation of the limit problem, which becomes useful for the separating the micro-and macroscopic variables in the general case.
A detailed analysis and physical explanation of this term is given in Section~\ref{subsec:SeparationTimeVariables}.
The more interesting advection is driven by $V_\hom$. It takes into account the transport arising from the pulsating microstructure. This velocity can be non zero even if $W_\hom$ vanishes. In the case of a strictly periodic microstructure it is a constant vector while a macroscopically modulated microstructure oscillation, leads to a time- and space dependent vector field. Thus local modulations of the microscopic oscillations can be employed for generating arbitrary advections.
A detailed analysis and discussion of te advective velocity $V_\hom$ including an example for which $V_\hom = 0$, while $W_\hom \neq 0$ is given in Section \ref{sec:Con:V} and illustrated in Figure \ref{fig:CellEvolution}.

\subsection*{Homogenisation approach}
For the homogenisation of \eqref{eq:StrongForm}, we use the two-scale transformation approach. We map the problem to a fixed periodically perforated domain $\e$ by means of a family of diffeomorphisms $\psie$. For the substitute problems, we pass to the homogenisation limit by means of two-scale convergence.
The resulting two-scale limit problem is defined on the macroscopic time--space domain $(0,T) \times \Omega$ times the cell time- and space- domain, i.e.~$(0,T) \times \Omega \times S \times \Yp$. In order to derive the homogenisation result for the actual two-scale limit domain, we transform the substitute pore domain $S \times \Yp$ by the two-scale limit diffeomorphisms $\psin(t,x,s, \cdot)$ to the actual reference pore domains $\bigcup\limits_{s \in S} \{s\} \times \Yptxs$. This approach is illustrated in Figure \ref{fig:TrafoMethodPulsating}. 

\begin{figure}[ht]
	\centering
	\includegraphics[width=1\linewidth]{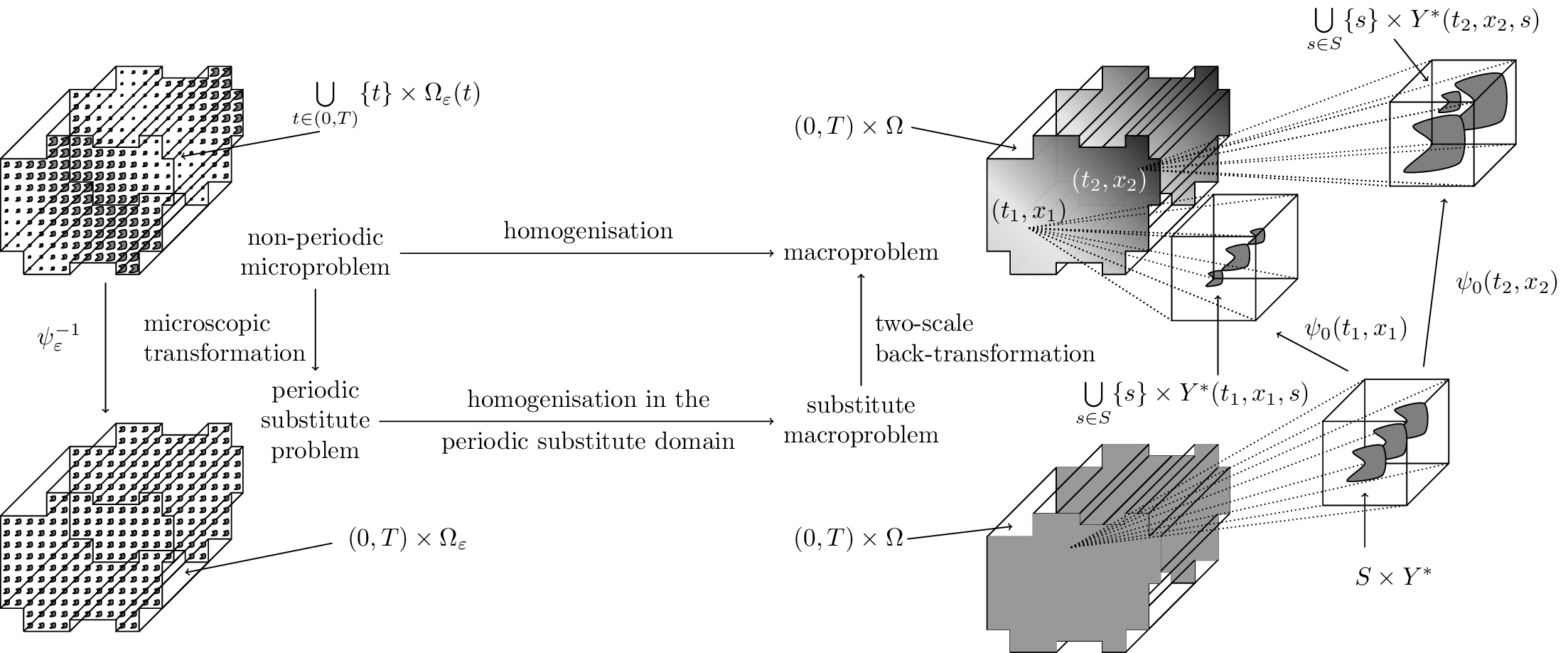}
	\caption{Two-scale transformation method for time-oscillating microstructure}
	\label{fig:TrafoMethodPulsating}
\end{figure}

In order to pass to the limit $\e \to 0$ in the substitute equations, we derive uniform a priori estimates for the solutions. Due to the time oscillations, one can not control uniformly the time derivative of $\hue$, which is the solution of the substitute equation. From a physical point of view one can also not expect that one can control it uniformly, since the oscillating porosity has to be compensated by some time oscillations of the concentration. Instead, we show a uniform estimate for the time derivative of the concentration $\hue$ times the local porosity.
We show a two-scale convergence compactness result fitting to those a priori estimates. Using this compactness result, we pass to the two-scale limit in the substitute equations.

In order to deduce the homogenised equations from the two-scale limit equation, we separate the micro- and macroscopic variables. For the separation of the spatial variables, we introduce effective coefficients and an advection term, which are determined by the solutions of cell problems. 
To separate the micro- and macroscopic time variable, we introduce an advective counter flow as described above.
For the transformation of the two-scale limit equations and the homogenised equations with its cell problems to the actual two-scale limit domain, we use the results and arguments of \cite{AA23,Diss} and introduce further two-scale transformation arguments.

\subsection*{Literature overview}
The homogenisation of linear and non-linear elliptic and parabolic problems in periodically perforated domains has been addressed in several works. See, for instance, \cite{CP79, Hor97} and the references therein. It enables not only the determination of effective coefficients but also the identification of new relevant terms in the effective equations e.g., the ``strange coming from nowhere'' \cite{CM82, CM97} or the Brinkman term \cite{All90}.	
Tools as the two-scale convergence \cite{All92, Ngu89, LNW02}, extension operators \cite{ACM+92} and the unfolding operator \cite{CDG08, CDG18} simplify the homogenisation for such kind of problems.

In many applications, chemical or biological reactions cause a slow change in the pore geometry. 
For given evolving microstructure, the homogenisation of (advection--) reaction--diffusion problems is considered in \cite{Pet07, Pet09, EM17, GNP21} the homogenisation of elasticity problems is considered in \cite{Ede19} and the homogenisation of Stokes flow is considered in \cite{JDE24, InstStokes, Diss}. There the problems are mapped to a fixed periodic microstructure in order to pass to the homogenisation limit for a substitute problem instead.
This approach is also used for the homogenisation of (advection--) reaction--diffusion problems in the case that the domain evolution is not given a priori but coupled with the unknowns \cite{GP23, NA23, GPPW24}.
This transformation approach is justified in \cite{AA23, Diss} by showing that the transformation and homogenisation commute in a meaningful way. Moreover, back-transformation arguments for the limit equations are derived that enable the derivation of transformation independent homogenisation results.

For a pulsating microstructure, only a few homogenisation results are available. In \cite{CP03}, the case of a rapidly pulsating microstructure is considered and extended to the case of random pulsation in \cite{CP06}. There the microscopic time interval length is of order $\e^2$ compared to the spatial microscopic scale of order $\e$. This scaling leads to an asymptotically large drift term such that the diffusion dynamics can be only observed in moving spatial coordinates and, thus, requires a macroscopically constant microstructure. There a non-mass-conserving boundary condition is considered leading also to a non-trivial drift term in the case where the homogenisation of our mass conserving model shows trivial advection. 

Parabolic problems with coefficients oscillating in both space and time were considered in \cite{BLP78}. The concept of two-scale convergence, featuring distinct scaling for oscillations in both time and space, is introduced in \cite{Hol97}. Within this framework, compactness results for two-scale convergence are established, facilitating the homogenisation of parabolic problems with oscillating coefficients in both temporal and spatial domains.

\subsection*{Structure of the article}
This article is organized as follows:
In Section~\ref{sec:MicroModel}, we formulate the $\e$-scaled parabolic problem and present the assumptions on the oscillating geometry. In Section~\ref{sec:Trafo}, we transform the problem into the substitute domain. In Section~\ref{sec:Existence}, we show the existence and uniqueness of a solution for this substitute problem and derive uniform a priori estimates. 
In Section~\ref{sec:Convergence}, we derive the convergence results by means of compactness arguments. In Section~\ref{sec:To-Scale-LimitIdent}, we pass to the limit in the parabolic equations by means of two-scale convergence leading to the two-scale limit equation. We separate the micro- and macroscopic variables leading to the homogenised equations in Section~\ref{sec:SeparationVariables}.
In Section~\ref{sec:Back-Trafo}, we transform the two-scale limit equations as well as the homogenised problems with its cell problems back to the actual two-scale domain. This leads to an transformation independent homogenisation result.
Finally, we investigate the new advection term, which is induces by the pulsating microstructure in Section~\ref{sec:Con:V}.

\subsection*{Notations}
Let $d \in \N$ and $U\subset \R^d$. For a function $u \colon U \to \R$ and a vector field $v \colon U\subset \R^{d}$, we use the following notation for its derivatives. For $x \in U$, we write $\nabla u(x) \in \R^d$ for the gradient of $u$ at $x\in U$, i.e.~$(\nabla u)_i(x) \coloneqq \partial_{x_i} u(x)$, and $\partial_x u(x) \coloneqq \nabla u^\top(x)\in \R^{1 \times d}$ for its transposed. We denote the Jacobian matrix of $v$ at $x \in U$ by $\partial_x v(x) \in \R^{d \times d}$ i.e.~$\partial_x v(x)_{ij} \coloneqq \partial_{x_j} v_i(x)$ and its transposed by $\nabla v(x) = \partial_x v^\top(x)$. 
Moreover, for $v \colon U\subset \R^{d}$, we define the divergence $\div v(x) = \sum_{i =1}^d\partial_{x_i} v_i(x)$.

We write $\1$ for the identity matrix and $\operatorname{Adj}(A)$ for the adjugate matrix of $A$, i.e.~$\operatorname{Adj}(A) A = \det(A) \1$.
With the above notation for derivatives, the Piola identity is written as $\div(\operatorname{Adj}(\partial_x v)) = 0$.

We use the index $\#$ to denote the periodicity of a function space, i.e.~for a domain $U \subset (0,1)^d$, $C_\#(U)$ denotes the subset of continuous functions on $\R^d$, which are $Y$-periodic, restricted to $U$. Similarly, we write $H^1_{\#}(U)$ to indicate the periodicity. 

We use $C>0$ as generic constant that can change during estimates but is independent of $\e$. 
For an measurable set $U$, we denote the indicator function by $\one{U}$, i.e.~$\one{U}(x) = 1$ if $x \in U$ and $\one{U}$, i.e.~$\one{U}(x) = 0$ if $x \not\in U$

\section{Microproblem}\label{sec:MicroModel}
In this section, we give a mathematical formulation of the $\e$-scaled problem and state the assumptions on the domain evolution and the data.

\subsection{Geometry}
The assumptions on the pulsating domain $\Oe(t)$ is given by means of a reference domain and a family of $\e$-scaled diffeomorphisms.
Let $\Omega \subset \R^d$ be an open domain representing the macroscopic domain and $[0,T]$ for $T \in \R$ the macroscopic time interval.
Let $(\e_n)_{n \in \N}$ be a positive sequence tending to zero as for instance $\e_n = n^{-1}$. In the following we write $\e = (\e)_{n \in \N}$.

\subsubsection*{Reference domain}
We assume that the macroscopic domain $\Omega \subset \R^d$ is open and bounded and consists on entire $\e$-scaled reference cells $Y = (0,1)^d$, i.e.~let $\Ke \coloneqq \{ k \in \Z^d \mid \e (k +Y) \subset \Omega \}$, we assume that $\Omega = \operatorname{int} \left(\bigcup\limits_{k \in \Ke} \e( k + \overline{Y}) \right)$.
Similarly, we assume that the time interval $(0,T)$ consists on entire $\e$-scaled reference time-intervals $S = (0,1)$.

We denote the open reference pore space in the periodicity cell by $\Yp \subset Y$ and its complementary solid part by $\Ys \coloneqq Y \setminus \Yp$. We denote the periodic extensions of $\Yp$ and $\Ysp$ by $\Ypp \coloneqq \operatorname{int} \left(\bigcup\limits_{k \in \Z^d} k + \overline{\Yp} \right)$ and $\Ysp \coloneqq \operatorname{int} \left(\bigcup\limits_{k \in \Z^d} k + \overline{\Ys} \right)$, respectively. We denote the interface of the pore and solid domain by $\Gamma\coloneqq \partial \Ypp \cap \partial \Ysp \cap \overline{Y}$.
We assume that $\Ypp$ is a Lipschitz domain.

The $\e$-scaled reference pore space $\Oe$, the $\e$-scaled reference solid space $\Oes$, their interface $\Ge$ and the reference outer boundary $\Le$ are given by
\begin{align*}
	\Oe \coloneqq \Omega \cap \e \Ypp \, , 
	\qquad 
	\Oes \coloneqq \Omega \cap \e \Ysp \, ,
	\qquad 
	\Ge \coloneqq \Omega \cap \partial \Oe \, ,
	\qquad 
	\Le \coloneqq \partial \Omega \cap \partial \Oe \, .
\end{align*}

\subsubsection*{Evolving microdomain}
In order to define the domains $\Oe(t)$, we use a family of mappings $\psie : [0,T] \times \overline{\Omega} \to \overline{\Omega}$. The assumptions on $\psie$ are given in Assumption~\ref{ass:psie}, below.
At time $t \in [0,T]$, we define the $\e$-scaled pore space $\Oe(t)$, the solid space $\Oes(t)$, their interface $\Ge(t)$ and the outer boundary $\Le(t)$ by
\begin{align*}
	\Oe(t) \coloneqq \psie(t, \Oe) \, , \quad 
	\Oes(t) \coloneqq \psie(t, \Oes) \, , \quad 
	\Ge(t) \coloneqq \psie(t, \Ge) \, , \quad 
	\Le(t) \coloneqq \psie(t, \Le) \, .
\end{align*}
We define the time-space sets by
\begin{align*}
	&\OeT \coloneqq \bigcup\limits_{t \in (0,T)} \{t\} \times \Oe(t) \, , \quad 
	&&\Omega_\e^{\mathrm{s}T} \coloneqq \bigcup\limits_{t \in (0,T)} \{t\} \times \Oes(t) \, , \\ 
	&\Gamma_\e^T \coloneqq \bigcup\limits_{t \in (0,T)} \{t\} \times \Ge(t) \, , \quad 
	&&\Le^T \coloneqq \bigcup\limits_{t \in (0,T)} \{t\} \times \Le(t) \, .
\end{align*}

\subsubsection*{Assumptions on the transformations}
We assume that the coordinate transformations $\psie$ fulfil the following regularity \ref{item:R1}--\ref{item:R2} and uniform bounds \ref{item:B1}--\ref{item:B3} as well as the exists some limit function \ref{item:L1}--\ref{item:L4} such that the asymptotic behaviour \ref{item:A1}--\ref{item:A5} is satisfied:

\begin{ass}[Assumptions on the transformations]\label{ass:psie}
	We assume that $\psie$ has the following regularity:
	\begin{enumerate}
		\item[{\crtcrossreflabel{(R1)}[item:R1]}] $\psie \in C^{1}([0,T];C^2(\overline{\Omega};\R^d))$\,,
		\item[{\crtcrossreflabel{(R2)}[item:R2]}] $\psie(t, \, \cdot \,)$ is a $C^2$-diffeomorphism from $\overline{\Omega}$ onto $\overline{\Omega}$ for every $t \in [0,T]$\,.
	\end{enumerate}
	We assume that $\psie$ satisfies the following uniform bounds:
	\begin{itemize}
		\item[{\crtcrossreflabel{(B1)}[item:B1]}] $\e^{j+l-1} \|\psie -x\|_{C^{j}([0,T]; C^l(\overline{\Omega}))} \leq C$ for $j \in \{0,1\}$ and $l \in \{0,1,2\}$\,,
		\item[{\crtcrossreflabel{(B2)}[item:B2]}] $\det(\partial_x\psie \tx ) \geq c_J$ for all $\tx \in [0,T] \times \Omega$ and some constant $c_J >0$\,,
		\item[{\crtcrossreflabel{(B3)}[item:B3]}] $\partial_x (\det(\partial_x\psie \tx )) \leq C$ for a.e.~$\tx \in [0,T] \times \Omega$ and some constant $c_J >0$\,. 
	\end{itemize}
	For the asymptotic behaviour of $\psie$, we assume that there exists a limit function $\psin$, which satisfies the following regularity 
	\begin{itemize}
		\item[{\crtcrossreflabel{(L1)}[item:L1]}] $\psin \in L^\infty((0,T) \times \Omega;C^1(S;C^2(\overline{Y}; \R^d)))$\,,
		\item[{\crtcrossreflabel{(L2)}[item:L2]}] $\psin(t,x,s,\cdot ) : \overline{Y} \to \overline{Y}$ is a for a.e.~$\tx \in (0,T) \times \Omega$ and every $s \in \overline{S}$ a $C^2$-diffeomorphism,
		\item[{\crtcrossreflabel{(L3)}[item:L3]}] the displacement mapping $y \mapsto \psin(t,x,y) -y$ can be extended $Y$-periodically i.e.~$(y \mapsto \psin(t,x,y) -y) \in L^\infty((0,T) \times \Omega;C^1(S;C^2_\#(\overline{Y}; \R^d)))$ \, ,
		\item[{\crtcrossreflabel{(L4)}[item:L4]}] the limit diffeomorphism can be extended $S$-periodically, i.e.~\newline
		$\psin \in L^\infty((0,T) \times \Omega;C^1_\#(S;C^2(\overline{Y}; \R^d)))$
	\end{itemize}
	and the following strong two-scale convergences hold
	\begin{itemize}
		\item[{\crtcrossreflabel{(A1)}[item:A1]}] $\e^{-1} (\psie\tx-x) \tss{2} \psin\txsy -y$\,,
		\item[{\crtcrossreflabel{(A2)}[item:A2]}] $\partial_x \psie \tss{2} \partial_y \psin$\,,
		\item[{\crtcrossreflabel{(A3)}[item:A3]}] $\partial_t \psie \tss{2} \partial_s \psin$\, ,
		\item[{\crtcrossreflabel{(A4)}[item:A4]}] $\e^{-1} (\psie(0,x)-x) \tss{2} \psin(0,x,0,y)-y$\,,
		\item[{\crtcrossreflabel{(A5)}[item:A5]}] $\partial_x \psie(0,x) \tss{2} \partial_y \psin(0,x,0,y)$\,.
	\end{itemize}
\end{ass}

The convergences in Assumption~\ref{ass:psie}\ref{item:A1}--\ref{item:A5} denote the strong two-scale convergence (see Definition \ref{def:two-scale}). Due to the uniform essential bounds, which are given by Assumption~\ref{ass:psie}\ref{item:B1}, the strong two-scale converges in Assumption~\ref{ass:psie}\ref{item:A1}--\ref{item:A5} hold also for arbitrary $p \in (1,\infty)$ instead of $2$.

The two-scale convergences of Assumption~\ref{ass:psie}\ref{item:A4}--\ref{item:A5} are formulated for a macroscopic domain $\Omega$ instead of $(0,T) \times \Omega$ and a reference cell $Y$ instead of $S \times Y$. They are used to identify for transforming and passing to the limit for the initial condition.

We use the following notation for the transformation quantities:
\begin{align}\label{eq:def:Pe}
	\Pe \coloneqq \partial_x \psie \, \qquad \Je\coloneq \det(\Pe) \, \qquad \Ae \coloneqq \operatorname{Adj}(\Pe) \, ,
	\\\label{eq:def:Pn}
	\Pn \coloneqq \partial_y \psin \, \qquad \Jn \coloneq \det(\Pn) \, \qquad \An \coloneqq \operatorname{Adj}(\Pn) \,.
\end{align}

We note that the above assumption ensure that $\Je \geq c_J$ and, thus, $\Pe$ is invertible and it holds $\Ae = \Je \Pem$. The uniform bound for $\Je$ from below can be transferred to $\Jn$ via the strong two-scale convergence of $\partial_x \psie$ and one gets $\Jn \geq c_J$ and, thus, also $\An = \Jn \Pnm$. 

For clarification, we note that the above uniform bounds for $\psie$ give
\begin{align*}
	\e^{-1} \| \psie -x \|_{L^\infty((0,T) \times \Omega)} + 
	\|\partial_x \psie \|_{L^\infty((0,T) \times \Omega)} + 
	\e \|\partial_x \partial_x \psie \|_{L^\infty((0,T) \times \Omega)} &\leq C \, ,
	\\
	\\|\partial_t \psie \|_{L^\infty((0,T) \times \Omega)} + \e
	\|\partial_x \partial_t\psie \|_{L^\infty((0,T) \times \Omega)} + 
	\e^2 \|\partial_x \partial_x \partial_t \psie \|_{L^\infty((0,T) \times \Omega)}& \leq C \,.
\end{align*}

\begin{rem}
	The Assumption~\ref{ass:psie} are used as follows: The regularity assumptions on $\psie$ allow us to transform the differential equation in the reference domain. 
	
	The uniform estimates on $\psie$ and its derivatives are crucial for the derivation of the uniform a priori estimates. Compared to the case of a slowly evolving domain, we can only assume the uniform bound for $\e^l \|\partial_t \psie\|_{C^{0}([0,T]; C^l(\overline{\Omega}))}$ instead of $\e^{l-1} \|\partial_t \psie\|_{C^{0}([0,T]; C^l(\overline{\Omega}))}$ for $l \in \{0,1,2\}$ in Assumption \ref{ass:psie}\ref{item:B1} due to the microscopic time oscillations.
	Moreover, Assumption \ref{ass:psie}\ref{item:B2} ensures that the Jacobian determinant $\Je$ is only varying macroscopically. Consequently, one has that $\Jn$ is constant with respect to $y$.
	
	The asymptotic behaviour of $\psie$ ensures that the coefficients in the transformed equations strongly two-scale converge and, hence, we can pass to the homogenisation limit. Moreover, it guarantees that the two-scale convergence of transformed and untransformed data and unknowns are equivalent (see \cite{AA23}) and thus, the homogenisation of the actual problem is equivalent to the homogenisation of the transformed problem. 
\end{rem}

\subsubsection*{Two-scale limit domain }
The two-scale limit domain of $\OeT$ should not be understood as domain in $(0,T) \times \Omega \times S \times Y$ but rather as family of domains $\Yptxs \subset Y$ with parameters $\txs \in (0,T) \times \Omega \times S$. In particular, for our homogenisation task it is not even necessary that $\Yptxs$ is defined for every $\tx \in (0,T) \times \Omega$. Indeed, it suffices that it is defined for a.e.~$\tx \in (0,T) \times \Omega$, where the null-set has to be chosen independent of the time $s \in S$. Nevertheless, at some points it simplifies the notation if one defines $\Yptxs$ for every $\tx \in (0,T) \times \Omega$ and defines the measurable set $\Omega_0^T$ by
\begin{align*}
	\Omega_0^T \coloneqq \bigcup\limits_{\txs \in (0,T) \times \Omega \times S} \{t\} \times \{x\}\times \{s\} \times \Yptxs\, .
\end{align*}
The set $\Omega_0^T$ and the domains $\Yptxs$ can be obtained by means of the two-scale convergence of the characteristic function of $\OeT$.
In this sense, the reference domain $(0,T) \times \Oe$ two-scale converges to $(0,1) \times \Yp$ for a.e.~$\tx \in (0,T) \times \Omega$.
The two-scale limit of the characteristic function $\one{(0,T) \times \Oe}$ is given by the function $\one{(0,T) \times \Omega \times S \times \Yp}$, which is an element in $L^p((0,T) \times \Omega \times S\times Y)$ and, thus, it does not define the domain uniquely. 
Indeed, for a.e.~$\txs \in (0,T) \times \Omega \times S$, $\one{\Yp}(t, x, s, \cdot)$ provides only the domain $\Yp \setminus N_1\txs \cup N_2\txs$ up to null sets $N_1\txs, N_2\txs \subset Y$.
The uniqueness can be obtained by requiring that for a.e.~$\txs \in \Omega$ the periodic extension of the domain is a Lipschitz domain. We
address the non-uniqueness of the two-scale limit representative of $\one{\OeT}$ in the same way. This provides the sets $\Yptxs$ for a.e.~$\tx \in (0,T) \times \Omega$. Lemma \ref{lem:TwoScaleEquiv} shows that
$\one{\OeT}\tx \tss{2,2} \one{(0,T) \times \Omega \times S \times \Yp}\txy$ if and only if $\one{(0,T) \times \Oe}(t,x,\psiem\tx) \tss{2,2} \one{(0,T) \times \Omega \times S \times \Yp}(t,x,s,\psinm\txsy)$. Thus, we can determine the two-scale limit for $\OeT$ and $\OesT$ by
\begin{align*} 
	\Yptxs &= \psin(t,x,s,\Yp) &&\textrm{ for every } s\in [0,1] \textrm{ and a.e.~}\tx \in (0,T) \times \Omega \, ,
	\\
	\Ystxs &= \psin(t,x,s,\Ys) &&\textrm{ for every } s\in [0,1] \textrm{ and a.e.~}\tx \in (0,T) \times \Omega\, .
\end{align*}
Their interface, is given by
\begin{align*}
	\Gtxs &= \psin(t,x,s,\Gamma) &&\textrm{ for every } s\in [0,T] \times[0,1] \textrm{ and a.e.~}\tx \in (0,T) \times \Omega \,
\end{align*}
and we define analogously to $\Omega_0^T$ the solid region $\Omega_0^{\mathrm{s}T}$ by
\begin{align*}
	\Omega_0^{\mathrm{s}T} \coloneqq \bigcup\limits_{\txs \in (0,T) \times \Omega \times S} \{t\} \times \{x\} \times\{s\} \times \Ystxs \,.
\end{align*}

For functions defined on $\OeT$ or $(0,T) \times \Oe$, we denote by $\widetilde{\cdot}$ the extension by $0$ to $(0,T) \times \Omega$. In the same way, for functions defined on $\Omega_0^T$ or $(0,T) \times \Omega \times S \times \Yp$, we denote by $\widetilde{\cdot}$ the extension by $0$ to $(0,T) \times \Omega \times S \times Y$. 

\subsection{Weak formulation of the $\e$-scaled problem}
The weak form of \eqref{eq:StrongForm} is given by:
Find $\ue \in L^2(\OeT)$ with $\nabla \ue \in L^2(\OeT)^d$ such that
\begin{align}\label{eq:WeakForm}
	\begin{aligned}
		-\int\limits_{\OeT} &\ue \partial_t \varphi+
		\int\limits_{\OeT} D \nabla \ue \cdot \nabla \varphi 
		+
		\int\limits_{(0,T)} \int\limits_{\Ge(t)} v_\Ge \cdot \nu u \varphi 
		\\
		&=
		\int\limits_{\OeT} \fe \varphi 
		-
		\int\limits_{(0,T)} \int\limits_{\Ge(t)} \e \ge \varphi + \int\limits_{\Oe(0)}\uein \varphi(0) \,
	\end{aligned}
\end{align}
for all $\varphi \in H^1(\OeT)$ with $\varphi(T, \cdot ) = 0$. 

\subsection{Assumptions on the data}
We make the following assumptions regarding the data:
\begin{ass}[Assumptions on the data]\label{ass:data}
	We assume that:
	\begin{itemize}
		\item the diffusivity coefficient is positive, i.e.~$D >0$,
		\item the velocity $\vGe$ of the boundary $\Ge(t)$ is given by 
		\begin{align}\label{eq:normalInterfaceVelocityEps}
			\vGe\tx = \partial_t \psie(t, \psiem\tx) \, .
		\end{align}
		
		\item $\fe$ is a bounded sequence in $L^2(\OeT)$, i.e.~
		\begin{align*}
			\|\fe\|_{L^2(\OeT)} \leq C \, ,
		\end{align*} 
		and there exists $\fn \in L^2(\Omega_0^T)$ such that
		\begin{align*}
			\fe \tsw{2} \fn \, .
		\end{align*}
		\item $(t \mapsto \|\ge(t)\|_{L^2(\Ge(t))}) \in L^2(0,T)$ with $\ge(t) \in L^2(\Ge(t))$ for a.e.~$t\in (0,T)$\,.
		\begin{align*}
			\e \intT \int\limits_{\Ge(t)} |\ge\tx|^2 \dd \sigma_x \, \dt \leq C
		\end{align*}
	and there exists $\gn\txs \in L^2(\Gtxs)$ with \newline $(\txs \mapsto \|g_0\txs\|_{L^2(\Gamma\txs)}) \in L^2((0,T) \times \Omega \times S)$ such that
		\begin{align*}
			\ge(t,\psie\tx) \tsw{2} \gn(t,x,s,\psin\txsy)
		\end{align*}
		in the sense of two-scale convergence on surfaces.
		\item 
		Let $\uein$ be a bounded sequence in $L^2(\Oe(0))$, i.e.~
		\begin{align*}
			\|\uein\|_{L^2(\Oe(0))} \leq C
		\end{align*}
		and there exists a function $\unin \in L^2(\Omega)$ such that
		\begin{align*}
			\uein(x) \tsw{2} \chi_{\Yp(0,x,0)}(y) \unin(x)
		\end{align*}
	\end{itemize}
\end{ass}

\begin{rem}
	The diffusivity constant $D$ can be generalized to a sequence of functions $D_\e \in L^\infty(\OeT)^{d \times d}$ for which a constant $\alpha >0$ exists such that
	$\xi^\top D\tx \xi \geq \alpha |\xi|^2$ for a.e.~$\tx \in \OeT$ and $D_0 \in L^\infty(\Omega_0^T)$ exists such that
	$\widetilde{D_\e}\tx \tss{p} \widetilde{D_0}\txsy$ for every $p \in (1,\infty)$, where $\widetilde{\cdot}$ denotes the extension by $0$ to $(0,T) \times \Omega$ and $(0,T) \times \Omega \times S \times Y$, respectively.
\end{rem}
\begin{rem}
	Note that the assumptions on the two-scale convergence of $\ge$ are given for the transformed quantity $\hge\tx = \ge(t,\psie\tx)$. Instead one can assume that $\ge$ can be extended to a function on $\Omega$ in $L^2(0,T;H^1(\Omega))$ and $\gn$ to a function in $L^2((0,T) \times \Omega \times S;H^1_\#(Y))$ such that
	\begin{align*}
		\ge \tsw{2} g_0 \,, \qquad \e \nabla_x \ge \tsw{2} \nabla_y g_0 \, .
	\end{align*} 
	Then, one has for the corresponding transformed function $\hge$ also the two-scale convergence
	\begin{align*}
		\hge \tsw{2} \hgn \,, \qquad \e \nabla_x \hge \tsw{2} \nabla_y \hgn \, ,
	\end{align*}
	which implies the two-scale convergence on surfaces.
\end{rem}

\section{Transformed microproblem}\label{sec:Trafo}
Using the mapping $\psie$, we can transform \eqref{eq:StrongForm} onto $[0,T] \times \Oe$ and the boundary conditions onto $[0,T] \times \Ge$ and $[0,T] \times \Le$, respectively.
We denote the transformed unknown and data by
\begin{align}\label{eq:TransfromationOfQuantities}
	\begin{aligned}
		&\hue\tx \coloneqq \ue(t,\psi\tx) && \textrm{ for } \tx \in (0,T) \times \Oe \, ,
		\\
		&\huein(x) \coloneqq \uein(\psi(0,x)) && \textrm{ for } \tx \in (0,T) \times \Oe \, ,
		\\
		&\hfe\tx \coloneqq \fe(t,\psi\tx) && \textrm{ for } \tx \in (0,T) \times \Oe \, ,
		\\
		&\hge\tx \coloneqq \ge(t,\psi\tx) && \textrm{ for } \tx \in (0,T) \times \Ge \, ,
		\\
		&\hvGe\tx \coloneqq \vGe(t,\psi\tx) && \textrm{ for } \tx \in (0,T) \times \partial \Oe \, .
	\end{aligned}
\end{align}
We denote the unit outer normal of $\Oe$ by $\hat{\nu}(x)$ and note the following identity 
\begin{align*}
	\| \PemT\tx \hat{\nu}\tx\|^{-1}\PemT \tx\hat{\nu}\tx = \nu(t,\psie\tx) \, ,
\end{align*}
which transforms the unit outer normal vector. The transformation of \eqref{eq:StrongForm} leads to the following transformed strong formulation:
\begin{subequations}\label{eq:StrongForm:Trafo}
	\begin{align}\label{eq:StrongForm:Trafo:1}
		\partial_t \hue - \partial_x \hue \Pem \partial_t \psie - \Jem\div(\Ae D \PemT \nabla \hue) &= \hfe && \textrm{ in }(0,T) \times \Oe \, ,
		\\\label{eq:StrongForm:Trafo:2}
		(-D\PemT \nabla \hue - \hue \hvGe) \cdot \|\PemT \hat{\nu}\|^{-1} \PemT \hat{\nu} &= \e \hge && \textrm{ on }(0,T) \times \Ge \, ,
		\\\label{eq:StrongForm:Trafo:3}
		(-D\PemT \nabla \hue - \hue \hvGe) \cdot \|\PemT \hat{\nu}\|^{-1} \PemT \hat{\nu} &= 0 && \textrm{ in }(0,T) \times \Le \, ,
		\\\label{eq:StrongForm:Trafo:4}
		\hue(t=0) &= \huein && \textrm{ in }\Oe \, ,
	\end{align}
\end{subequations}

One can derive the corresponding weak form by either transforming the integrals in the weak form \eqref{eq:WeakForm} or transforming directly the strong form \eqref{eq:StrongForm:Trafo}. 
For the latter approach, one has to multiply \eqref{eq:StrongForm:Trafo:1} by $\Je$.
Noting that the Jacobi formula together with the Piola identity gives $\partial_t \Je = \div(\Ae \partial_t \psie)$, we get with the product rule
\begin{align}\label{eq:BulkTermStrongFormTrafoModified}
	\partial_t (\Je \hue) - \div(\Ae \partial_t \psie \hue) - \div(\Ae D \PemT \nabla \hue) = \Je \hfe \,.
\end{align}
For the transformation of the boundary terms, we note that \eqref{eq:normalInterfaceVelocityEps} leads to 
\begin{align}\label{eq:normalInterfaceVelocityEps:Trafo}
	\hvGe\tx \cdot \PemT\tx \hat{\nu} = \partial_t \psie\tx \cdot \PemT\tx \hat{\nu} \, .
\end{align}
We multiply the boundary terms by $\Je \|\PemT \hat{\nu}\|$ and use the identity
\eqref{eq:normalInterfaceVelocityEps:Trafo}, which leads to
\begin{align}\label{eq:BoundaryConditionTrafoModified}
	\begin{aligned}
		(-\Ae D\PemT \nabla \hue - \hue \Ae \partial_t \psie) \cdot \hat{\nu} &= \e \Je \|\PemT \hat{\nu}\| \hge && \textrm{ on }(0,T) \times \Ge \,,
		\\
		(-\Ae D\PemT \nabla \hue - \hue\Ae \partial_t \psie) \cdot \hat{\nu} &=0 && \textrm{ on }(0,T) \times \Le \,.
	\end{aligned}
\end{align} 
We multiply \eqref{eq:BulkTermStrongFormTrafoModified} by a test function $\varphi$, integrate the divergence terms by parts and use the boundary conditions \eqref{eq:BoundaryConditionTrafoModified}, which leads to the following weak form of \eqref{eq:StrongForm:Trafo}:

\noindent Find $\hue \in L^2(0,T;H^1(\Oe))$ with $\partial_t \hue, \, \partial_t (\Je \hue) \in L^2(0,T;H^1(\Oe)')$ such that
\begin{subequations}\label{eq:WeakForm:Trafo}
	\begin{align}\label{eq:WeakForm:Trafo:1}
		\begin{aligned}
			\intT \langle \partial_t(\Je \hue)(t), &\varphi(t) \rangle_{H^1(\Oe)',H^1(\Oe)} \dt 
			+
			\int\limits_{(0,T) \times \Oe}
			\big(\Ae (\PemT \nabla \hue + \partial_t \psie) \hue \big) \cdot \nabla \varphi 
			\\
			&=
			\int\limits_{(0,T) \times \Oe} \Je \hfe \varphi
			-
			\int\limits_{(0,T) \times \Ge} \Je \|\PemT \hat{\nu}\| \e \hge \varphi \, ,
		\end{aligned}\\
		\begin{aligned}\label{eq:WeakForm:Trafo:2}
			\hue(0) &= \huein \hspace{8cm}
		\end{aligned}
	\end{align}
\end{subequations}
for all $\varphi \in L^2(0,T;H^1(\Oe))$.

\begin{lemma}\label{lem:EquiWeakForm}
	The weak forms \eqref{eq:WeakForm} are equivalent \eqref{eq:WeakForm:Trafo}, i.e.~$\ue$ is a solution of \eqref{eq:WeakForm:Trafo} if and only if $\hue$ is a solution of \eqref{eq:WeakForm:Trafo} and it holds $\hue\tx = \ue(t, \psie\tx)$ for a.e.~$\tx \in (0,T) \times \Oe$.
\end{lemma}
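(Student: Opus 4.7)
The plan is to prove equivalence by a direct change of variables $y = \psie\tx$, which induces the bijections $u_\e(t,y) \leftrightarrow \hue\tx := u_\e(t,\psie\tx)$ on unknowns and $\varphi(t,y) \leftrightarrow \hat\varphi\tx := \varphi(t,\psie\tx)$ on test functions. By Assumption~\ref{ass:psie}\ref{item:R1}--\ref{item:R2} and \ref{item:B2}, $\psie(t,\cdot)$ is for each $t$ a $C^2$-diffeomorphism of $\overline\Omega$ with $\Je \geq c_J > 0$, so this map is a bi-Lipschitz isomorphism of the relevant function spaces: $u_\e \in L^2(\OeT)$ with $\nabla u_\e \in L^2(\OeT)^d$ corresponds to $\hue \in L^2(0,T;H^1(\Oe))$, and test functions $\varphi \in H^1(\OeT)$ with $\varphi(T,\cdot) = 0$ correspond bijectively to $\hat\varphi \in H^1((0,T)\times\Oe)$ with $\hat\varphi(T,\cdot) = 0$.

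The key transformation identities I would invoke are: the volume element $\dy = \Je \dx$; Nanson's formula $\dd\sigma_y = \Je \|\PemT\hat{\nu}\|\dd\sigma_x$ for the surface element; the chain rules $(\nabla_y u_\e)\circ\psie = \PemT\nabla\hue$ and $(\partial_t\varphi)\circ\psie = \partial_t\hat\varphi - \nabla\hat\varphi \cdot \Pem\partial_t\psie$; the algebraic identity $\Je\Pem = \Ae$; the transformed unit normal $\nu\circ\psie = \|\PemT\hat{\nu}\|^{-1}\PemT\hat{\nu}$; and the velocity compatibility \eqref{eq:normalInterfaceVelocityEps:Trafo}, which gives $\hvGe \cdot \PemT\hat{\nu} = \partial_t\psie \cdot \PemT\hat{\nu}$. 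The crucial coupling between the two formulations of the time derivative is the Jacobi--Piola identity $\partial_t\Je = \div(\Ae\partial_t\psie)$, already used in the paper's derivation of \eqref{eq:BulkTermStrongFormTrafoModified}.

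Applying these identities termwise to \eqref{eq:WeakForm} yields: the diffusion term becomes $\int\Ae D\PemT\nabla\hue\cdot\nabla\hat\varphi$; the data integrals become $\int\Je\hfe\hat\varphi$, $\int_{\Ge}\e\Je\|\PemT\hat{\nu}\|\hge\hat\varphi$, and $\int\Je(0)\huein\hat\varphi(0)$; the time-derivative summand $-\int u_\e\partial_t\varphi$ splits via the time chain rule into $-\int\Je\hue\partial_t\hat\varphi + \int\hue(\Ae\partial_t\psie)\cdot\nabla\hat\varphi$; and the moving-boundary integral $\int_{\Ge(t)} v_\Ge\cdot\nu u\varphi$ becomes $\int_{\Ge}(\Ae\partial_t\psie)\cdot\hat{\nu}\hue\hat\varphi$ via \eqref{eq:normalInterfaceVelocityEps:Trafo} and Nanson's formula. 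To match this with \eqref{eq:WeakForm:Trafo:1}, I would integrate the duality pairing $\int_0^T\langle\partial_t(\Je\hue),\hat\varphi\rangle\dt$ by parts in time, using $\hat\varphi(T) = 0$ and $\hue(0) = \huein$, obtaining $-\int\Je\hue\partial_t\hat\varphi - \int\Je(0)\huein\hat\varphi(0)$; combined with the divergence theorem on $\Oe$ and the Jacobi--Piola identity, the surface contribution from the boundary-velocity term and the advective volume term $\int\Ae\partial_t\psie\hue\cdot\nabla\hat\varphi$ reorganise into exactly the terms appearing in \eqref{eq:WeakForm:Trafo:1}. Reversing each step delivers the converse implication.

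The main obstacle will be the bookkeeping around the time derivative and matching the regularity classes: the formulation \eqref{eq:WeakForm:Trafo} explicitly demands $\partial_t\hue, \partial_t(\Je\hue) \in L^2(0,T;H^1(\Oe)')$, whereas \eqref{eq:WeakForm} only asks for $u_\e, \nabla u_\e \in L^2(\OeT)$ with the time derivative appearing in the weak sense via integration by parts against $\varphi$. I would deal with this by first verifying the pointwise identity between the two variational forms for smooth (or mollified) $\hue$ and smooth test functions, where all manipulations are classical, and then extending by density using the uniform bounds from Assumption~\ref{ass:psie}\ref{item:B1}--\ref{item:B3}; the required dual regularity of $\partial_t(\Je\hue)$ is then read off from the $L^2$-regularity of the remaining terms in \eqref{eq:WeakForm:Trafo:1}, and the initial-trace identity $\hue(0) = \huein$ follows from evaluating the transformation rule \eqref{eq:TransfromationOfQuantities} at $t = 0$.
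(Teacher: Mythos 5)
Your approach — transform each integral termwise via the change of variables $y = \psie(t,x)$, using Nanson's formula, the gradient and time chain rules, $\Je\Pem = \Ae$, the normal transformation, the velocity identity \eqref{eq:normalInterfaceVelocityEps:Trafo} and the Jacobi--Piola identity — is exactly the paper's one-line proof made explicit, and the termwise transformation identities you list are correct: the time-derivative integral splits into $-\int\Je\hue\,\partial_t\hat{\varphi} + \int \hue\, \Ae\partial_t\psie\cdot\nabla\hat{\varphi}$, the diffusion term becomes $\int\Ae D\PemT\nabla\hue\cdot\nabla\hat{\varphi}$, and the data and initial integrals transform as you say.

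The gap is in the final ``reorganisation.'' You assert that the transformed moving-boundary term $\int_{(0,T)\times\Ge}(\Ae\partial_t\psie\cdot\hat{\nu})\,\hue\hat{\varphi}$ and the advective volume term $\int\Ae\partial_t\psie\,\hue\cdot\nabla\hat{\varphi}$ ``reorganise into exactly the terms appearing in \eqref{eq:WeakForm:Trafo:1}'' via the divergence theorem and Jacobi--Piola. That step cannot succeed: the advective volume term already appears verbatim in \eqref{eq:WeakForm:Trafo:1} (and is supplied by the time chain rule alone), while \eqref{eq:WeakForm:Trafo:1} has no boundary integral other than the $\hge$-source; applying the divergence theorem to the advective volume term produces another copy of $\int_{\partial\Oe}(\Ae\partial_t\psie\cdot\hat{\nu})\,\hue\hat{\varphi}$ with the same sign, so the two add rather than cancel. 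Carrying the computation through to the end, the transformed \eqref{eq:WeakForm} differs from \eqref{eq:WeakForm:Trafo:1} (after integrating the duality pairing by parts in time) by precisely the surviving surface integral $\int_{(0,T)\times\Ge}(\Ae\partial_t\psie\cdot\hat{\nu})\,\hue\hat{\varphi}$, which is nonzero for a genuinely pulsating microstructure. The source of the discrepancy is not your calculus but the statement of \eqref{eq:WeakForm} itself: deriving it from \eqref{eq:StrongForm} via the Reynolds transport theorem, the lateral-boundary contribution of the time-derivative integration-by-parts, $-\int_{\Ge(t)}v_\Ge\cdot\nu\,u\,\varphi$, cancels the identical term injected by the boundary conditions \eqref{eq:StrongForm:2}--\eqref{eq:StrongForm:3}, so no $v_\Ge$-term should appear in the weak form. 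Once \eqref{eq:WeakForm} is corrected (drop $\int_{\Ge(t)}v_\Ge\cdot\nu\,u\,\varphi$), the termwise change of variables you set up yields \eqref{eq:WeakForm:Trafo} directly with no further reorganisation; as written, the vague ``reorganise into exactly'' hides the fact that the calculation does not close.
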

\begin{proof}
	Lemma \ref{lem:EquiWeakForm} follows directly from transforming the integrals \eqref{eq:WeakForm} in \eqref{eq:WeakForm:Trafo} and employing the identities \eqref{eq:TransfromationOfQuantities}.
\end{proof}

\section{Existence and a priori estimates}\label{sec:Existence}
In this section, we show that the weak form \eqref{eq:WeakForm:Trafo} has a unique solution. 
Moreover, we derive a priori estimates for $\ue$ as well as $\e$-independent a priori estimates for $(\Je\hue)$.
\begin{thm}\label{thm:ExistenceEps}
	For every $\e >0$, there exists a unique solution ${\hue \in L^2(0,T;H^1(\Oe))}$ with $\partial_t \hue, \, \partial_t (\Je \hue) \in L^2(0,T;H^1(\Oe)')$ of \eqref{eq:WeakForm:Trafo}.
	Moreover, there exists a constant $C>0$ such that
	\begin{align}\label{eq:est:Je-ue}
		\| \Je u_\e\|_{L^\infty(0,T;L^2(\Oe))} + \|\nabla (\Je u_\e)\|_{L^2((0,T) \times \Oe)} &\leq C \, ,
		\\\label{eq:est:dt:Je-ue}
		\|\partial_t(\Je u_\e)\|_{L^2(0,T;H^1(\Oe)')} &\leq C \,.
	\end{align}
\end{thm}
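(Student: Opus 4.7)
My plan is to reformulate \eqref{eq:WeakForm:Trafo} in terms of the multiplicative variable $\ve \coloneqq \Je \hue$, whose time derivative is precisely the quantity controlled by the data. Since $\Je$ is smooth in space with $\|\nabla \Je\|_\infty \leq C$ by~\ref{item:B3} and $\Je \geq c_J > 0$ by~\ref{item:B2}, one has $\ve \in L^2(0,T; H^1(\Oe))$ if and only if $\hue \in L^2(0,T; H^1(\Oe))$. Substituting $\hue = \ve / \Je$ into \eqref{eq:WeakForm:Trafo:1} and using $\Ae = \Je \Pem$ together with the product rule for $\nabla(\ve/\Je)$ recasts the problem as
\begin{align*}
\intT \langle \partial_t \ve, \varphi \rangle \dt + \int\limits_{(0,T) \times \Oe} \bigl( A_\e \nabla \ve + B_\e \ve \bigr) \cdot \nabla \varphi = \int\limits_{(0,T) \times \Oe} \Je \hfe \varphi - \int\limits_{(0,T) \times \Ge} \e \Je \| \PemT \hat{\nu} \| \hge \varphi
\end{align*}
with $A_\e \coloneqq D\, \Pem \PemT$ and $B_\e \coloneqq \Ae \partial_t \psie / \Je - D\, \Pem \PemT \nabla \Je / \Je$. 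By Assumption~\ref{ass:psie}, $A_\e$ is uniformly coercive (its smallest eigenvalue is bounded below by $D \|\Pe\|_\infty^{-2}$) and both $A_\e, B_\e$ are uniformly bounded in $L^\infty((0,T) \times \Oe)$; the right-hand side defines a functional in $L^2(0,T; H^1(\Oe)')$. Standard Galerkin/Lions--Magenes theory then yields a unique $\ve \in L^2(0,T; H^1(\Oe))$ with $\partial_t \ve \in L^2(0,T; H^1(\Oe)')$ and $\ve(0) = \Je(0) \huein$; the solution of \eqref{eq:WeakForm:Trafo} is obtained via $\hue = \ve / \Je$, and $\partial_t \hue \in L^2(0,T; H^1(\Oe)')$ then follows, for each fixed $\e$, from $\partial_t \hue = \Jem \partial_t \ve - (\partial_t \Je / \Je) \hue$.

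For the uniform estimate \eqref{eq:est:Je-ue}, I would test the reformulated equation with $\varphi = \ve$. The chain rule $\intT \langle \partial_t \ve, \ve \rangle \dt = \tfrac{1}{2} \| \ve(T) \|_{L^2(\Oe)}^2 - \tfrac{1}{2} \| \ve(0) \|_{L^2(\Oe)}^2$ is applicable, the diffusion term contributes $\int\int A_\e \nabla \ve \cdot \nabla \ve \geq c \| \nabla \ve \|_{L^2}^2$, and the convection term $\int\int B_\e \ve \cdot \nabla \ve$ together with the bulk source $\int \Je \hfe \ve$ are absorbed via Young's inequality into $\delta \| \nabla \ve \|^2 + C_\delta \| \ve \|^2$. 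The boundary source is controlled by the standard $\e$-uniform trace inequality $\| \ve \|_{L^2(\Ge)}^2 \leq C (\e^{-1} \| \ve \|_{L^2(\Oe)}^2 + \e \| \nabla \ve \|_{L^2(\Oe)}^2)$ for periodically perforated domains: the explicit factor $\e$ in front of $\hge$ combines with the data bound $\e \intT \| \hge \|_{L^2(\Ge)}^2 \dt \leq C$ to yield a contribution $\leq C \| \ve \|_{L^2((0,T) \times \Oe)} + C \e \| \nabla \ve \|_{L^2}$, both absorbable. Gr\"onwall's inequality then delivers \eqref{eq:est:Je-ue}. Estimate \eqref{eq:est:dt:Je-ue} is obtained by testing \eqref{eq:WeakForm:Trafo:1} against an arbitrary $\varphi \in L^2(0,T; H^1(\Oe))$ and bounding each right-hand side term by $C \| \varphi \|_{L^2(H^1)}$ using \eqref{eq:est:Je-ue}, the data assumptions, and once more the trace inequality. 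Uniqueness follows from linearity by applying the energy estimate to the difference of two solutions with vanishing data.

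The main subtlety, already signalled in the introduction, is that a direct test with $\varphi = \hue$ fails: the chain rule then produces $\tfrac{1}{2} \intT \intOe (\partial_t \Je) \hue^2 \dx \dt$, and by~\ref{item:B1} one only has $\partial_t \Je = \operatorname{tr}(\Ae\, \partial_x \partial_t \psie) = \mathcal{O}(\e^{-1})$ because of the microscopic time oscillation, so this term is not pointwise absorbable and would cause exponential blow-up through Gr\"onwall. Choosing $\varphi = \Je \hue$, or equivalently working in the variable $\ve = \Je \hue$, sidesteps the issue: the fast-oscillating $\partial_t \Je$ is entirely hidden inside $\partial_t \ve$ on the left-hand side, while the effective coefficients $A_\e, B_\e$ involve only $\Je$, $\Jem$, $\nabla \Je$, $\PemT$ and $\partial_t \psie$, all uniformly bounded by Assumption~\ref{ass:psie}. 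This is the concrete mechanism behind the assertion that $\partial_t (\Je \hue)$, but not $\partial_t \hue$, can be controlled uniformly in $\e$.
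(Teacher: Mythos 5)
Your proposal is correct and follows essentially the same route as the paper: both pass to the variable $\ve = \Je \hue$ (the paper does this by rewriting $\Ae D \PemT \nabla \hue = \Ae D\PemT \nabla\Jem(\Je\hue) + \Pem D\PemT\nabla(\Je\hue)$ and then testing with $\varphi = \chi_{(0,t)}\Je\hue$, while you phrase it as an explicit change of unknown), use the uniform coercivity of $\Pem D \PemT$, absorb the lower-order and boundary terms via Young's inequality together with the $\e$-scaled trace inequality of Lemma~\ref{lem:TraceInequality}, and close with Gr\"onwall; the bound on $\partial_t(\Je\hue)$ then follows by duality exactly as you describe. Your concluding paragraph explaining why the naive test with $\varphi=\hue$ must fail (the $\mathcal{O}(\e^{-1})$ term $\partial_t\Je$ entering through the chain rule) is a worthwhile addition that the paper only hints at in its introduction.
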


For the derivation of the uniform bounds in Theorem \ref{thm:ExistenceEps}, we note that the following uniform bounds for the coefficients in \eqref{eq:WeakForm:Trafo} hold.
\begin{lemma}\label{lem:unifromBoundsPe}
	Assume $\psie$ satisfies Assumption \ref{ass:psie}\ref{item:R1}--\ref{item:R2} and Assumption \ref{ass:psie}\ref{item:B1} for $j = 0$ and $l \in \{0,1\}$ and Assumption \ref{ass:psie}\ref{item:B2}. Let $\Pe$, $\Je$, $\Ae$ be given by \eqref{eq:def:Pe}.
	Then, there exist constants $\alpha, C>0$ such that
	\begin{align*}
		\| \Pe \|_{L^\infty((0,T)\times \Oe))}
		+
		\| \Je \|_{L^\infty((0,T)\times \Oe))}
		+
		\| \Ae \|_{L^\infty((0,T)\times \Oe)))}
		&\leq C,
		\\
		\| \Pem \|_{L^\infty((0,T)\times \Oe))} 
		+
		\| \Jem\|_{L^\infty((0,T)\times \Oe))}
		+
		\| \Aem \|_{L^\infty((0,T)\times \Oe)))}
		&\leq C \, ,
	\end{align*}
	\begin{align*}
		\|\xi^\top \Pem\tx D \PemT\tx \xi\| &\geq \alpha |\xi|^2 && \textrm{for all }\xi \in \R^d \, ,\\
		\|\xi^\top \Ae\tx D \PemT\tx \xi\| &\geq \alpha |\xi|^22 && \textrm{for all }\xi \in \R^d\, .
	\end{align*}
	If additionally Assumption \ref{ass:psie}\ref{item:B3} is satisfied, it holds
	\begin{align}
		\|\nabla_x \Je\|_{L^\infty((0,T)\times \Oe))} + \|\nabla_x \Jem\|_{L^\infty((0,T)\times \Oe))} \leq C \,.
	\end{align}
	
\end{lemma}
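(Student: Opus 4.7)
\bigskip
\noindent\textbf{Proof plan for Lemma \ref{lem:unifromBoundsPe}.}
The strategy is to read off each bound directly from the assumptions via elementary linear algebra, exploiting that all matrices under consideration are built from $\Pe = \partial_x \psie$, together with its determinant and adjugate, and that $D$ is a positive scalar. No homogenisation or limit-passing is needed; the lemma is a pointwise statement which follows from standard multilinear identities once $\Pe$ is known to be bounded in $L^\infty$ with determinant bounded below.

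First I would establish the $L^\infty$ bounds. Taking $j=0$ and $l=1$ in Assumption~\ref{ass:psie}\ref{item:B1} immediately gives $\|\Pe\|_{L^\infty} = \|\partial_x \psie\|_{L^\infty} \leq C$. Since $\Je = \det \Pe$ and $\Ae = \operatorname{Adj}\Pe$ are polynomial expressions in the entries of $\Pe$, it follows at once that $\|\Je\|_{L^\infty} + \|\Ae\|_{L^\infty} \leq C$. Assumption~\ref{ass:psie}\ref{item:B2} yields $\Je \geq c_J > 0$, so $\|\Jem\|_{L^\infty} \leq c_J^{-1}$, and the Cramer-type identity $\Pem = \Je^{-1}\Ae$ together with its analogue $\Aem = \Je^{-1}\Pe$ (which follows from $\Ae \Pe = \Je \1$ after dividing by $\Je^2$) give $\|\Pem\|_{L^\infty} + \|\Aem\|_{L^\infty} \leq C$.

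For the coercivity estimates, note that $D$ is a positive scalar, so
\begin{equation*}
\xi^\top \Pem D \PemT \xi = D\, |\PemT \xi|^2, \qquad \xi^\top \Ae D \PemT \xi = D\, \Je\, |\PemT \xi|^2,
\end{equation*}
where in the second identity I used $\Ae = \Je \Pem$ so that $\Ae \PemT = \Je \Pem \PemT$. From $\xi = \PeT \PemT \xi$ and the first step one has $|\xi| \leq \|\Pe\|_{L^\infty} |\PemT \xi|$, hence $|\PemT \xi|^2 \geq \|\Pe\|_{L^\infty}^{-2} |\xi|^2$. Combining this with $\Je \geq c_J$ yields both inequalities with $\alpha = D \|\Pe\|_{L^\infty}^{-2}\min(1,c_J)$.

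Finally, the bound on $\nabla_x \Je$ is literally the content of Assumption~\ref{ass:psie}\ref{item:B3}, and the chain rule $\nabla_x \Jem = -\Jem^2 \nabla_x \Je$ combined with the already established lower bound on $\Je$ gives the corresponding bound on $\nabla_x \Jem$. There is no substantive obstacle; the only point requiring care is to invoke Assumption~\ref{ass:psie}\ref{item:B2} (rather than just invertibility at each point) to obtain the \emph{uniform} lower bound on $\Je$, which is what turns pointwise invertibility of $\Pe$ into the uniform $L^\infty$ bounds for $\Pem$ and $\Aem$.
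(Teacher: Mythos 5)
Your proof is correct and follows essentially the same route as the paper: read off the bound on $\Pe$ from \ref{item:B1}, observe that $\Je$ and $\Ae$ are polynomials in the entries of $\Pe$, invert using the lower bound $\Je \geq c_J$ from \ref{item:B2}, and differentiate $\Jem$ via the chain rule. The only substantive difference is that you actually spell out the coercivity estimates (via $\xi^\top\Pem D\PemT\xi = D|\PemT\xi|^2$ and the bound $|\PemT\xi|\geq\|\Pe\|_{L^\infty}^{-1}|\xi|$ from $\xi=\Pe^\top\PemT\xi$), which the paper's proof silently omits; you also correctly trace the $\Jem$ bound to \ref{item:B2} where the paper's text cites \ref{item:B1} a second time, apparently a typo.
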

\begin{proof}
	The uniform estimate of $\Pe$ is given in Assumption \ref{ass:psie}\ref{item:B1} and the estimate for $\Jem$ by \ref{ass:psie}\ref{item:B1}. Since $\Je = \det(\Pe)$ and the entries of $\Ae = \Adj(\Pe)$ are polynomials in the entries of $\Pe$, we obtain also the uniform boundedness for $\Je$. 
	The entries of $\Pem$ are polynomials in the entries of $\Pe$ and $\Jem$ and, thus, it is also uniformly bounded. The entries of $\Aem = \Jem \Pe$ are polynomials in the entries of $\Pe$ and $\Jem$ and, thus, it is also uniformly bounded. 
	
	The uniform bound for $\nabla \Je$ is given by Assumption \ref{ass:psie}\ref{item:B3} and the estimate for $\nabla \Jem$ can be deduced with the uniform bound for $\Jem$ via the product rule, which gives $\nabla \Jem = -\Je^{-2} \nabla \Je$.
\end{proof}

For the transformed data, we obtain the following uniform bounds.
\begin{lemma}\label{lem:unifromBoundsData}
	Assume $\psie$ satisfy Assumption \ref{ass:psie}\ref{item:R1}--\ref{item:R2}, \ref{item:B1}--\ref{item:B2} and let $\fe$ and $\ge$satisfy Assumption~\ref{ass:data}.
	Then, for $\hfe$, $\hge$, $\huein$ given by \eqref{eq:TransfromationOfQuantities} there exist constant $C>0$ such that
	\begin{align*}
		\|\hfe \|_{L^2((0,T) \times \Oe)} +
		\e \|\hge \|_{L^2((0,T) \times \Ge)} +
		\|\huein\|_{L^2(\Oe)} \leq C \, .
	\end{align*}
	\begin{proof}
		Since $\Jem$ is uniformly essentially bounded, we get
		\begin{align*}
			\|\fe\|_{L^2((0,T) \times \Oe)}^2 &= \intTOe \hfe^2 = \int\limits_{\OeT} \Jem(t,\psie\tx) |f(t,x)|^2 \dxt
			\\
			&\leq C \int\limits_{\OeT} \Jem(t,\psie\tx) |f(t,x)|^2 \dxt = C \|\fe\|_{L^2(\OeT)}^2 \leq C\, .
		\end{align*}
		By a similar argument we obtain the bound for $\huein$ and with the uniform essential boundedness of $\Jem$
		and $\|\PemT \nu\|^{-1}$, we get also the uniform boundedness of $\huein$.
	\end{proof}
\end{lemma}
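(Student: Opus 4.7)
The plan is to reduce each of the three bounds to the corresponding hypothesis in Assumption~\ref{ass:data} via a change of variables driven by $\psie(t,\cdot)$, exploiting the uniform estimates on the associated Jacobians provided by Lemma~\ref{lem:unifromBoundsPe}.

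For the bulk term, $\hfe\tx = \fe(t,\psie\tx)$ with $\psie(t,\cdot)$ a $C^2$-diffeomorphism from $\Oe$ onto $\Oe(t)$. At each fixed $t$, the substitution $y = \psie\tx$ has Jacobian $\Je\tx$, so
\begin{align*}
	\|\hfe\|_{L^2((0,T) \times \Oe)}^2
	= \intT\intOe |\fe(t,\psie\tx)|^2 \dxt
	= \intT\int\limits_{\Oe(t)} \frac{|\fe\tx|^2}{\Je(t,\psiem\tx)}\,\dxt.
\end{align*}
Since Lemma~\ref{lem:unifromBoundsPe} gives $\Jem \leq C$ uniformly in $\e$, this is bounded by $C\|\fe\|_{L^2(\OeT)}^2 \leq C$. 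Applying the same argument at $t=0$ to the diffeomorphism $\psie(0,\cdot)\colon\Oe\to\Oe(0)$ yields $\|\huein\|_{L^2(\Oe)} \leq C\|\uein\|_{L^2(\Oe(0))} \leq C$.

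For the surface term I use the hypersurface change-of-variables formula: if $\hat\nu$ denotes the unit outward normal to $\Ge$, then $\psie(t,\cdot)$ maps $\Ge$ onto $\Ge(t)$ with $\dd\sigma_y = \Je\,\|\PemT\hat\nu\|\,\dd\sigma_x$ when $y=\psie\tx$, because $\Adj(\Pe)^\top = \Je\PemT$. Therefore
\begin{align*}
	\|\hge\|_{L^2((0,T)\times\Ge)}^2
	= \intT\int\limits_{\Ge(t)} \frac{|\ge\tx|^2}{\Je(t,\psiem\tx)\,\|\PemT(t,\psiem\tx)\hat\nu\|}\,\dd\sigma_x\,\dt.
\end{align*}
The factor $\Jem$ is uniformly bounded by Lemma~\ref{lem:unifromBoundsPe}; for the normal factor, the operator-norm bound $\|\Pe\|_{L^\infty}\leq C$ yields $\|\PemT\hat\nu\| \geq \|\Pe\|^{-1}|\hat\nu| \geq 1/C$ pointwise, so $1/\|\PemT\hat\nu\|$ is uniformly bounded as well. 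Combining this with the hypothesis $\e\intT\int_{\Ge(t)}|\ge|^2\dd\sigma_x\dt \leq C$ gives $\e^2\|\hge\|_{L^2((0,T)\times\Ge)}^2 \leq C\e$, and therefore $\e\|\hge\|_{L^2((0,T)\times\Ge)}\leq C$ as required.

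The only delicate point is the uniform lower bound on the surface Jacobian factor $\|\PemT\hat\nu\|$, and this is immediate from the $L^\infty$ bound on $\Pe$ combined with $|\hat\nu|=1$. No Gronwall inequality, extension operator, or two-scale convergence enters; the statement is a pure change-of-variables computation.
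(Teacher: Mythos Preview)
Your proof is correct and follows the same approach as the paper: transform each integral back to the moving domain via $y=\psie(t,x)$ and absorb the Jacobian factors using the uniform $L^\infty$ bounds from Lemma~\ref{lem:unifromBoundsPe}. You are in fact more explicit than the paper on the surface term, writing out the tangential Jacobian $\Je\|\PemT\hat\nu\|$ and deriving the lower bound $\|\PemT\hat\nu\|\geq \|\Pe\|^{-1}$ from the operator-norm bound on $\Pe$; the paper simply invokes the uniform boundedness of $\|\PemT\hat\nu\|^{-1}$ without justification.
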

The following trace inequality for periodically perforated Lipschitz domains $\Oe$ is well known. It can be shown by a standard decomposition argument.
\begin{lemma}\label{lem:TraceInequality}
	Let $p \in [1,\infty)$. For every $\theta>0$ there exists a constant $C(\theta)>0$ independent of $\e$, such that for all $\ue \in W^{1,p}(\Oe)$
	\begin{align*}
		\e^{\frac{1}{p}} \|\ue\|_{L^p(\Ge)} \le C(\theta)\|\ue\|_{L^p(\Oe)} + \theta \e \|\nabla \ue\|_{L^p(\Oe)}.
	\end{align*}
\end{lemma}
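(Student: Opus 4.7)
The plan is to reduce the estimate to the reference pore cell $\Yp$ by scaling, apply a sharp trace inequality on $\Yp$, and sum over the $\e$-scaled cells covering $\Oe$. Since the paper assumes $\Omega = \operatorname{int}\bigl(\bigcup_{k \in \Ke} \e(k+\overline{Y})\bigr)$, one has the clean decompositions $\Oe = \bigcup_{k \in \Ke} \e(k+\Yp)$ and $\Ge = \bigcup_{k \in \Ke}\e(k+\Gamma)$ (modulo null sets), which is exactly what the decomposition argument needs.

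The key ingredient I would prove first is the following refined trace inequality on the reference pore cell: for every $\eta>0$ there exists $\tilde C(\eta)>0$ such that for all $v \in W^{1,p}(\Yp)$,
\begin{equation*}
\|v\|_{L^p(\Gamma)}^p \leq \tilde C(\eta)\|v\|_{L^p(\Yp)}^p + \eta\|\nabla v\|_{L^p(\Yp)}^p.
\end{equation*}
Because $\Ypp$ is assumed Lipschitz and $\Gamma$ sits inside the compact set $\overline Y$, the trace operator $W^{1,p}(\Yp) \to L^p(\Gamma)$ is compact. An Ehrling-type contradiction argument then yields the refinement: otherwise one would extract a sequence $(v_n)$ with $\|v_n\|_{L^p(\Gamma)}=1$, $\|v_n\|_{L^p(\Yp)}\to 0$ and $\|\nabla v_n\|_{L^p(\Yp)}$ bounded, whose weak $W^{1,p}$-limit vanishes but which converges strongly in $L^p(\Gamma)$ to a function of norm one, a contradiction.

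With the refined reference inequality in hand, I would apply it cell by cell to $v_k(y) \coloneqq u_\e(\e(k+y))$ for each $k \in \Ke$. The change of variables $x = \e(k+y)$ produces the scaling factors $\e^d$ for the bulk integrals, $\e^{d-1}$ for the surface integral, and $\e^{-1}$ for the gradient; multiplying through by $\e^d$ and summing over $k \in \Ke$ yields
\begin{equation*}
\e\|u_\e\|_{L^p(\Ge)}^p \leq \tilde C(\eta)\|u_\e\|_{L^p(\Oe)}^p + \eta\e^p\|\nabla u_\e\|_{L^p(\Oe)}^p.
\end{equation*}
Taking $p$-th roots, using the subadditivity $(a+b)^{1/p}\leq a^{1/p}+b^{1/p}$ for $p\geq 1$, and relabeling $\theta \coloneqq \eta^{1/p}$, $C(\theta) \coloneqq \tilde C(\theta^p)^{1/p}$ delivers the claimed inequality. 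The only non-routine step is the Ehrling-type refinement on the reference cell, and that in turn reduces to the compactness of the trace; the scaling and summation are straightforward bookkeeping of $\e$-powers, so I do not expect a serious obstacle beyond the reference-cell inequality.
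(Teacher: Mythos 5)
Your proof is exactly the ``standard decomposition argument'' the paper alludes to without writing out: rescale to the reference pore cell $\Yp$, establish an $\eta$-refined trace inequality there, then scale back and sum over the $\e$-cells. The bookkeeping of powers of $\e$ ($\e^{-d}$ for bulk, $\e^{-(d-1)}$ for surface, an extra $\e^p$ from the chain rule on the gradient, then multiply by $\e^d$ and sum) is correct, and the passage from the $p$-th power inequality to the stated one via subadditivity of $t\mapsto t^{1/p}$ is fine.

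One genuine caveat: your Ehrling-type refinement on $\Yp$ hinges on compactness of the trace $W^{1,p}(\Yp)\to L^p(\Gamma)$, and this holds only for $p>1$. For $p=1$ the trace $W^{1,1}(\Yp)\to L^1(\Gamma)$ is bounded but \emph{not} compact, so the contradiction argument breaks down (the extracted sequence need not converge strongly in $L^1(\Gamma)$). This is not merely a defect of the method: taking $v_n(y)=\max\{1-n\operatorname{dist}(y,\Gamma),0\}$ gives $\|v_n\|_{L^1(\Gamma)}=|\Gamma|$, $\|v_n\|_{L^1(\Yp)}\to 0$ and $\|\nabla v_n\|_{L^1(\Yp)}\to|\Gamma|$, which shows the refined reference-cell inequality fails for any $\eta<1$, and hence the Lemma as stated fails for $p=1$ and $\theta<1$. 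So strictly speaking the hypothesis in the statement should read $p\in(1,\infty)$; since the paper only invokes the lemma with $p=2$, this is harmless for the rest of the argument, but you should flag the restriction $p>1$ when you cite compactness of the trace.

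A small cosmetic point: compactness is really needed for $\Yp$ (equivalently, that the part $\Gamma$ of $\partial\Yp$ is reachable by Lipschitz coordinate patches), not for $\Ypp$ per se; the paper's assumption that $\Ypp$ is Lipschitz is what guarantees this, so your invocation is fine, just phrase it in terms of $\Yp$.
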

\begin{proof}[Proof of Theorem \ref{thm:ExistenceEps}]
	\textbf{Existence:}
	The existence of a solution $\hue \in L^\infty(0,T;L^2(\Omega)) \cap L^2(0,T;H^1(\Oe))$ with ${\partial_t \hue , \partial_t (\Je \hue) \in L^2(0,T;H^1(\Oe)')}$ of \eqref{eq:WeakForm:Trafo} can be deduced by a standard Galerkin approach.
	
	\textbf{Uniform boundedness of $\hue$ and $\nabla \hue$:}
	Using the product rule, we obtain
	\begin{align*}
		\Ae D \PemT \nabla \hue 
		= \Ae D \PemT \nabla \Jem( \Je \hue) + \Pem D \PemT \nabla (\Je \hue).
	\end{align*}
	
	Thus, we can rewrite \eqref{eq:WeakForm:Trafo} as
	\begin{align}
		\begin{aligned}\label{eq:weakForm:Ju}
			\int\langle \partial_t (\Je u_\e )(t),& \varphi(t)\rangle_{H^1(\Oe)', H^1(\Oe)} \dt + (\Pem D \PemT \nabla (\Je \hue), \nabla \varphi)_{L^2((0,T) \times \Oe)} 
			\\
			&+ ((\Ae D \PemT \nabla \Jem + \Psi_\e^{-1}\partial_t\psi_\e) (\Je u_\e), \nabla \varphi)_{L^2((0,T) \times \Oe)} 
			\\
			=&(\Je\hfe, \phi)_{L^2((0,T) \times \Oe)}
			- (\Je \|\PemT \hat{\nu}\|\e \hge, \varphi)_{L^2((0,T) \times \Ge)} \, .
		\end{aligned}
	\end{align}
	Since $\Je \in L^\infty((0,T) \times \Oe)$, $\nabla \Je \in L^\infty((0,T) \times \Oe)^d$ and $\hue \in L^2(0,T;H^1(\Oe))$, we have $(\Je\hue) \in L^2(0,T;H^1(\Oe))$. Thus, we can test \eqref{eq:weakForm:Ju} with $\varphi = \chi_{(0,t)} \Je \hue$ for $t \in (0,T)$. After integrating the time derivative term by parts, we get
	\begin{align}
		\begin{aligned}\label{eq:TestedWithJu}
			\tfrac{1}{2}&\| (\Je\hue)(t)\|_{L^2(\Oe)}^2 + (\Pem D \PemT \nabla (\Je\hue), \nabla (\Je\hue))_{L^2((0,t) \times \Oe)}
			\\
			=& 
			\tfrac{1}{2}\| (\Je\huein)(0)\|_{L^2(\Oe)}^2
			-((\Ae D \PemT \nabla \Jem + \Pem \partial_t \psie )(\Je\hue), \nabla(\Je\hue) )_{L^2((0,t) \times \Oe)} 
			\\
			&+(\Je\hfe, \Je\hue )_{L^2((0,t) \times \Oe)}
			-
			(\Je \|\PemT \hat{\nu}\| \e \hge, \Je\hue)_{L^2((0,T) \times \Ge)} \, .
		\end{aligned}
	\end{align}
	We estimate the left-hand side of \eqref{eq:TestedWithJu} with the coercivity estimate for $\Pem D \PemT$ given by Lemma \ref{lem:unifromBoundsPe} from below. We estimate the right-hand side of \eqref{eq:TestedWithJu} from above using the uniform bounds of the coefficients and data, which are given in Lemma \ref{lem:unifromBoundsPe} and Lemma \ref{lem:unifromBoundsData}, respectively, and employing the H\"older inequality
	\begin{align*}
		\tfrac{1}{2}&\| (\Je\hue)(t)\|_{L^2(\Oe)}^2 + \alpha \|\nabla (\Je\hue)\|_{L^2((0,t) \times \Oe)} 
		\\
		\leq& C \|\huein\|_{L^2(\Oe)}^2 + C\|\Je\hue\|_{L^2((0,t) \times \Oe)}\|\nabla (\Je\hue)\|_{L^2((0,t) \times \Oe)}
		\\
		&+C \|\Je\hue\|_{L^2((0,t) \times \Oe)}
		+
		C \e^{1/2}\|\Je\hue\|_{L^2((0,t) \times \Ge)}\, .
	\end{align*}
	Using the uniform bound of $\huein$, the Young inequality and the $\e$-scaled trace inequality from Lemma \ref{lem:TraceInequality}, we obtain for every $\delta >0$ a constant $C_\delta$ such that
	\begin{align*}
		\tfrac{1}{2}\|J_\e(t) u_\e(t)& \|_{L^2(\Oe)} ^2
		+ \alpha \|\nabla(\Je u_\e)\|_{L^2((0,t) \times \Oe)}^2
		\\
		&\leq
		C + C_\delta \|\Je u_\e\|_{L^2((0,t) \times \Oe)}^2 + \delta \|\nabla (\Je u_\e)\|_{L^2((0,t) \times \Oe)}^2.
	\end{align*}
	After choosing $\delta$ small enough, we can compensate all the gradient terms on the right-hand side by means of the left-hand side. Then, the Lemma of Gronwall shows the uniform estimate \eqref{eq:est:Je-ue}.
	
	\textbf{Uniform boundedness of $\partial_t (\Je\hue)$:}
	Having the uniform estimate for $\Je \hue$ and $\nabla (\Je \hue)$, we obtain from Corollary \ref{cor:estimate:ue} uniform bounds for $\hue$ and $\nabla \hue$
	We rewrite \eqref{eq:WeakForm:Trafo}, use the H\"older inequality and inserting the uniform estimates for the coefficients as well as the uniform bounds for $\hue$ and $\nabla \hue$ and get
	\begin{align}
		\begin{aligned}
			| \langle \partial_t(&\Je \hue), \varphi \rangle_{L^2(0,T;H^1(\Oe)') , L^2(0,T;H^1(\Oe))} |
			\\
			\leq& |(\Ae D\PemT \nabla \hue, \varphi )_{L^2((0,T) \times \Oe)}| 
			+ |(\Ae\partial_t \psie \hue, \nabla \varphi )_{L^2((0,T) \times \Oe)}|
			\\
			& +	|(\Je\hfe, \nabla \varphi )_{L^2((0,T) \times \Oe)}|
			+|(\Je \|\PemT \hat{\nu}\| \e \hge, \varphi )_{L^2((0,T) \times \Ge)}|
			\\
			\leq& 
			C (\|\varphi\|_{L^2((0,T)\times \Oe)} + \|\nabla \varphi\|_{L^2((0,T)\times \Oe)})
			+ C(\e^{1/2}\|\varphi\|_{L^2((0,T)\times \Ge)}) 
			\\
			\leq& C \|\varphi\|_{L^2(0,T;H^1(\Oe))} \, ,
		\end{aligned}
	\end{align}
	which shows \eqref{eq:est:dt:Je-ue}.
\end{proof}
\begin{cor}\label{cor:estimate:ue}
	Let $\hue \in L^2(0,T;H^1(\Oe))$ be the solution of \eqref{eq:WeakForm:Trafo}.
	There exists a constant $C$ such that
	\begin{align}\label{eq:est:ue}
		\| u_\e \|_{L^2(0,T;H^1(\Oe))} \leq C.
	\end{align}
\end{cor}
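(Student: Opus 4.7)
The plan is to derive the bound on $\hue$ as a direct algebraic consequence of the uniform estimate \eqref{eq:est:Je-ue} already established for $\Je \hue$ in the proof of Theorem \ref{thm:ExistenceEps}. The key observation is that Assumption~\ref{ass:psie}\ref{item:B2} gives $\Je \geq c_J > 0$, so $\Jem$ is a well-defined uniformly bounded multiplier, and by Lemma~\ref{lem:unifromBoundsPe} both $\Jem$ and $\nabla \Jem$ are bounded in $L^\infty((0,T)\times\Oe)$ uniformly in $\e$. Thus we can ``divide out'' the factor $\Je$ from the estimate on $\Je \hue$ without losing control of either $\hue$ or $\nabla \hue$.

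Concretely, I would write $\hue = \Jem \cdot (\Je \hue)$ and apply the product rule to obtain
\begin{equation*}
  \nabla \hue = \Jem \, \nabla (\Je \hue) + (\nabla \Jem)\, (\Je \hue).
\end{equation*}
Taking $L^2((0,T)\times\Oe)$-norms and using the triangle inequality together with the $L^\infty$-bounds on $\Jem$ and $\nabla \Jem$ gives
\begin{equation*}
  \|\hue\|_{L^2((0,T)\times\Oe)} + \|\nabla \hue\|_{L^2((0,T)\times\Oe)}
  \leq C \bigl(\|\Je\hue\|_{L^2((0,T)\times\Oe)} + \|\nabla(\Je\hue)\|_{L^2((0,T)\times\Oe)}\bigr),
\end{equation*}
and the right-hand side is uniformly bounded by \eqref{eq:est:Je-ue} (the $L^2$-in-time norm of $\Je\hue$ is dominated by $\sqrt{T}$ times the $L^\infty(0,T;L^2(\Oe))$-norm). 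This yields \eqref{eq:est:ue}.

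There is no real obstacle in this argument; it is essentially a bookkeeping step. The only point worth highlighting is that the gradient term forces the use of $\nabla \Jem$, which is precisely why Assumption~\ref{ass:psie}\ref{item:B3} was imposed and correspondingly incorporated into Lemma~\ref{lem:unifromBoundsPe}. Without that assumption, one would only obtain an $L^2$-bound for $\hue$ but not for $\nabla\hue$.
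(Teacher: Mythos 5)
Your proof is correct and follows essentially the same route as the paper: write $\hue = \Jem(\Je\hue)$, differentiate with the product rule, and invoke the uniform $L^\infty$-bounds on $\Jem$ and $\nabla\Jem$ from Lemma~\ref{lem:unifromBoundsPe} together with the estimate~\eqref{eq:est:Je-ue}. Your remark that the gradient bound is exactly what requires Assumption~\ref{ass:psie}\ref{item:B3} is accurate and a useful observation.
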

\begin{proof}
	Employing the uniform essential boundedness of $J_\e^{-1}$ and \eqref{eq:est:Je-ue} as well as the uniform estimate on $\Je\hue$ given in \eqref{eq:est:Je-ue}, we obtain with the H\"older inequality
	\begin{align*}
		\begin{aligned}
			\| u_\e \|_{L^2((0,T)\times \Oe)} &= \|J_\e^{-1} \Je u_\e \|_{L^2((0,T)\times \Oe)} 
			\\
			&\leq \| J_\e^{-1} \|_{L^\infty((0,T)\times \Oe)} \| \Je u_\e \|_{L^2((0,T)\times \Oe)} \leq C \,.
		\end{aligned}
	\end{align*}
	Using also the uniform essential boundedness of $\nabla J_\e^{-1}$ and the uniform estimate on $\nabla ( \Je\hue)$ given in \eqref{eq:est:Je-ue}, we get
	\begin{align*}
		\begin{aligned}
			\| \nabla u_\e \|_{L^2((0,T)\times \Oe)} &= \| \nabla (J_\e^{-1} \Je u_\e) \|_{L^2((0,T)\times \Oe)} 
			\\&\leq \| \nabla J_\e^{-1} \Je u_\e)\|_{L^2((0,T)\times \Oe)} 
			+
			\| J_\e^{-1} \nabla (\Je u_\e))\|_{L^2((0,T)\times \Oe)} 
			\\
			&\leq C\| \Je u_\e\|_{L^2((0,T)\times \Oe)} 
			+
			C \| \nabla (\Je u_\e))\|_{L^2((0,T)\times \Oe)} \leq C \, .
		\end{aligned}
	\end{align*}
\end{proof}

\section{Convergence results}\label{sec:Convergence}
In order to pass to the homogenisation limit in \eqref{eq:WeakForm:Trafo}, we use the notion of two-scale convergence, which was introduced in \cite{All92, Ngu89} (see also \cite{LNW02}).

\begin{defi}[Two-scale convergence]
	\label{def:two-scale}
	Let $p\in [1,\infty)$ and $\tfrac{1}{p} + \tfrac{1}{q}=1$. A sequence $\ue \in L^p((0,T) \times \Omega)$ two-scale converges to $\un \in L^p((0,T) \times \Omega \times S \times Y))$ if
	\begin{align*}
		\lim\limits_{\e \to 0} \int\limits_{(0,T)} \intO \ue\tx \varphi \left(t, x, \frac{t}{\e}, \frac{x}{\e}\right) \dxt = \intTOSY \un\txsy \varphi\txsy \dysxt
	\end{align*}
	for all $\varphi \in L^{q}((0,T) \times \Omega;C_\#(S \times Y)))$.
	We write $\ue \tsw{p} \un$ or $\ue\tx \tsw{p} \un\txsy$ if we want to emphasize the dependence on the variables.
	For $p \in (1,\infty)$, we say that $\ue$ is strongly two-scale converging to $\un \in L^p((0,T) \times \Omega \times S \times Y)$ if and only if $\ue \tsw{p} \un$ and $\lim\limits_{\e \to 0} \|\ue\|_{ L^p((0,T) \times \Omega)} =
	\|\un\|_{L^p((0,T) \times \Omega \times S \times Y)}$. We write $\ue \tss{p} \un$, or $\ue\tx \tss{p} \un\txsy$ if we want to emphasize the dependence on the variables.
\end{defi}
The following two-scale convergence compactness results is well known.
\begin{lemma}
	Let $p \in (1,\infty)$ and $\ue$ a bounded sequence in $L^p((0,T) \times \Omega)$. Then, there exists a subsequence $\ue$ and $\un \in L^p((0,T) \times \Omega \times S \times Y)$ such that $\ue \tsw{p} \un$.
\end{lemma}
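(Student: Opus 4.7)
The plan is to follow the classical Nguetseng--Allaire compactness strategy, adapted to the product test space $L^q((0,T)\times\Omega;C_\#(S\times Y))$ that accounts for both time and space oscillations. First I would establish the fundamental mean-value property: for every admissible $\varphi \in L^q((0,T)\times\Omega;C_\#(S\times Y))$,
\begin{equation*}
\lim_{\e\to 0}\left\|\varphi\!\left(t,x,\tfrac{t}{\e},\tfrac{x}{\e}\right)\right\|_{L^q((0,T)\times\Omega)} = \|\varphi\|_{L^q((0,T)\times\Omega\times S\times Y)}.
\end{equation*}
This can be shown by a periodic unfolding argument: partitioning $(0,T)\times\Omega$ into $\e$-scaled time-space cells (using that $\Omega$ and $(0,T)$ are each assumed to consist of entire $\e$-cells), on each such cell one rescales the fast variables and invokes Riemann-type averaging together with density of tensor products in $L^q((0,T)\times\Omega;C_\#(S\times Y))$.

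Next, define the linear functionals
\begin{equation*}
L_\e(\varphi) := \int_0^T\!\int_\Omega u_\e(t,x)\,\varphi\!\left(t,x,\tfrac{t}{\e},\tfrac{x}{\e}\right)\dx\dt.
\end{equation*}
By H\"older's inequality and the mean-value property, $|L_\e(\varphi)| \le \|u_\e\|_{L^p}\,\|\varphi(\cdot,\cdot/\e)\|_{L^q}$, so $\limsup_{\e\to 0}|L_\e(\varphi)|\le C\|\varphi\|_{L^q((0,T)\times\Omega\times S\times Y)}$ uniformly in $\varphi$. Since $p>1$, the target space $L^q((0,T)\times\Omega;C_\#(S\times Y))$ is separable. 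Applying a Cantor diagonal extraction on a countable dense subset produces a subsequence (still denoted $u_\e$) along which $L_\e(\varphi)$ converges for every $\varphi$ in that dense subset, and hence, by uniform boundedness, for every $\varphi$ in the whole space, to some continuous linear functional $L$ with $\|L\|\le C$.

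Finally, since $L^q((0,T)\times\Omega;C_\#(S\times Y))$ embeds densely and continuously into $L^q((0,T)\times\Omega\times S\times Y)$, the functional $L$ extends uniquely to a bounded linear functional on the latter space. The Riesz representation theorem on $L^q$ of this $\sigma$-finite product measure space then yields $\un \in L^p((0,T)\times\Omega\times S\times Y)$ with $L(\varphi) = \int\un\varphi$, which is exactly the two-scale convergence $u_\e \tsw{p} u_0$.

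The main obstacle I expect is the mean-value property in the time-space oscillating setting: standard references typically treat only spatial oscillations, so one must carefully handle the joint $(t/\e,x/\e)$ argument. This is eased here by the assumption that $(0,T)$ and $\Omega$ are unions of entire $\e$-cells, reducing it to a cell-by-cell computation, but one still needs a density argument reducing the general case to smooth tensor-product $\varphi$, where Fubini and classical Riemann--Lebesgue apply directly.
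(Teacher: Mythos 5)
Your proposal reconstructs the classical Nguetseng--Allaire compactness argument (mean-value property for admissible oscillating test functions, uniform boundedness of the functionals $L_\e$, Cantor diagonal extraction over a separable dense subset, and Riesz representation in $L^q$ of the product space), extended to the joint time--space periodicity cell $S \times Y$; this is precisely the approach in the references the paper cites for this lemma (Allaire, Nguetseng, and the time--space version of Holmbom), and the paper itself simply states the result as well known without reproducing a proof. The argument is correct; the only point worth tightening is that the uniform-in-$\e$ bound needed for the equicontinuity/density step should be taken with respect to the Bochner norm $\|\varphi\|_{L^q((0,T)\times\Omega;C_\#(S\times Y))}$ (via the pointwise sup estimate $|\varphi(t,x,t/\e,x/\e)| \le \sup_{(s,y)}|\varphi(t,x,s,y)|$), while the mean-value property is then used afterward to show the limit functional is bounded in the weaker $L^q((0,T)\times\Omega\times S\times Y)$ norm, so that Riesz applies.
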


In order to derive a more subtle compactness result, we use the following extension operator $\Ee$.
\begin{lemma}\label{lem:ExtensionOperator}
	There exists a family of extension operators $\Ee : H^1(\Oe) \to H^1(\Omega)$ and a constant $C$ such that for all $\varphi \in H^1(\Oe)$
	\begin{align*}
		\|\Ee \varphi \|_{H^1(\Omega)} &\leq \|\varphi \|_{H^1(\Oe)} \, ,
		\\
		\|\Ee \varphi \|_{L^2(\Omega)} &\leq \|\varphi \|_{L^2(\Oe)} \, .
	\end{align*}
\end{lemma}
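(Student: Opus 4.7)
The construction follows the classical framework for periodically perforated Lipschitz domains due to Acerbi, Chiadò Piat, Dal Maso and Percivale \cite{ACM+92}. The plan is in two steps: first, build a reference extension operator using only the $Y$-periodicity and the Lipschitz regularity of $\Ypp$; then, scale it by $\e$ and exploit the assumption that $\Omega$ is a union of entire $\e$-cells so as to transfer the bounds to $\Oe$ uniformly in $\e$.

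For the reference step, I would invoke the construction of \cite{ACM+92} to obtain, for every $U \subset \R^d$ that is a union of integer translates of $\overline{Y}$, a linear extension operator $P_U : H^1(U \cap \Ypp) \to H^1(U)$ with
\begin{align*}
    \|P_U u\|_{L^2(U)} &\leq C\|u\|_{L^2(U \cap \Ypp)}\,,\\
    \|\nabla P_U u\|_{L^2(U)} &\leq C\|\nabla u\|_{L^2(U \cap \Ypp)}\,,
\end{align*}
with a constant $C$ depending only on $\Ypp$. The construction is local: the value of $P_U u$ on each cell is determined by the values of $u$ in a bounded neighbourhood (a fixed number of adjacent cells), which is the content of the ACM+92 construction and is what makes the constant independent of $U$.

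For the $\e$-rescaling, given $\varphi \in H^1(\Oe)$ I would set $u(y) := \varphi(\e y)$ on $(\Omega/\e) \cap \Ypp$. By assumption, $\Omega$ is the union of cells $\e(k + \overline{Y})$ for $k \in \Ke$, so $U := \Omega/\e$ is a union of integer translates of $\overline{Y}$ and $P_U$ applies. Then $\Ee \varphi(x) := (P_U u)(x/\e)$ defines a function in $H^1(\Omega)$ that agrees with $\varphi$ on $\Oe$. A change of variables $y = x/\e$ gives the scalings $\|\cdot\|_{L^2(\e V)} = \e^{d/2}\|\cdot\|_{L^2(V)}$ and $\|\nabla\cdot\|_{L^2(\e V)} = \e^{d/2 - 1}\|\nabla\cdot\|_{L^2(V)}$, which multiply through identically on both sides of the reference estimates, so the bounds carry over to $\Ee$ with the same constant.

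The only genuinely non-trivial ingredient is the reference extension; once it is in hand, the scaling is routine. The main obstacle would be establishing the local structure of $P_U$ — in particular, ensuring that the extension across cell interfaces remains in $H^1$ and that the bounding constant does not degenerate when boundary cells of $\Omega/\e$ have neighbours outside of $U$. Both issues are handled by the local reflection construction in \cite{ACM+92}, which uses only information from cells inside $U$ and requires exactly the Lipschitz hypothesis on $\Ypp$ imposed here.
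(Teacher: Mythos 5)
Your approach --- build a periodic reference extension in the spirit of Acerbi--Chiad\`o Piat--Dal Maso--Percivale, then rescale by $\e$ and use that $\Omega$ is a union of entire $\e$-cells --- is the standard one and is exactly the route the paper takes (by citation). One point deserves more care, though. The result in \cite{ACM+92} as actually stated gives bounds only for $H^1_{\mathrm{loc}}$ and $L^2_{\mathrm{loc}}$, i.e.~on subdomains $\Omega' \Subset \Omega$, precisely because the extension in a given cell draws on values of $u$ from a bounded neighbourhood of adjacent cells, some of which can lie outside $\Omega/\e$ for cells touching $\partial \Omega$. In your last paragraph you assert that the ACM+92 construction ``uses only information from cells inside $U$'', but that is not how their argument is actually set up, and it is the reason they only obtain the local estimates. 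Obtaining the global bound on all of $\Omega$ requires a further argument --- either a one-sided reflection scheme near boundary cells or a modification of the periodic extension operator exploiting that $\Omega$ consists of whole cells. The paper handles this by citing \cite{Hoe16}, which supplies precisely that refinement. So you have the right structure and have identified the right potential gap, but the claim that ACM+92 already closes it is too quick; you should invoke the refinement for whole-cell domains (or prove it) rather than attributing it to the local reflection construction itself.
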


\begin{proof}
	Lemma \ref{lem:ExtensionOperator} was shown for spaces $H^1_{\textrm{loc}}$ and $L^2_{\textrm{loc}}$ in \cite{ACM+92} and extended for domains $\Omega$ that consists on entire $\e$-scaled cell to Lemma \ref{lem:ExtensionOperator} in \cite{Hoe16}.
\end{proof}

\begin{lemma}\label{lem:CompactnessStrongConvergence}
	Let $v_\e \in L^2(0,T;H^1(\Oe)) \cap H^1(0,T;H^1(\Oe)')$ be a bounded sequence, i.e.~there exists a constant $C>0$ such that
	\begin{align}
		\|v_\e \|_{L^2(0,T;H^1(\Oe))} + \|\partial_t v_\e \|_{L^2(0,T;H^1(\Oe)')} \leq C.
	\end{align}
	Then, there exists $v_0 \in L^2(0,T; H^1(\Omega)) \cap H^1(0,T;H^1(\Omega)')$, $v_1 \in L^2((0,T) \times \Omega \times S;H^1_\#(Y))$ and a subsequence $v_\e$ and such that
	\begin{align}
		\begin{aligned}\label{eq:CompactnessTheorem:v,grad}
			& \Ee v_\e \to v_0 && \textrm{ in } L^2((0,T) \times \Omega) \, ,
			\\
			&\nabla_x(\Ee v_\e)\tx \tsw{2} \nabla_x v_0\tx +\nabla_y v_1\txsy \, .
		\end{aligned}
	\end{align}
\end{lemma}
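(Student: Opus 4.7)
The plan is threefold: (i) extend $v_\e$ to all of $\Omega$ using the $\e$-uniform operator $\Ee$ from Lemma~\ref{lem:ExtensionOperator}, (ii) deduce strong $L^2((0,T)\times\Omega)$-compactness of the extensions by a Fr\'echet--Kolmogorov argument whose time-translate estimate is extracted from the dual bound on $\partial_t v_\e$, and (iii) identify the limit $v_0$, its time regularity, and the two-scale gradient corrector $v_1$. The bound in Lemma~\ref{lem:ExtensionOperator} immediately gives $\|\Ee v_\e\|_{L^2(0,T;H^1(\Omega))}\leq C$, so up to a subsequence $\Ee v_\e\rightharpoonup v_0$ weakly in $L^2(0,T;H^1(\Omega))$.

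The main step is the strong $L^2$-compactness. Because $\Ee$ is time-independent, linear, and satisfies $\|\Ee\psi\|_{L^2(\Omega)}\leq\|\psi\|_{L^2(\Oe)}$, it suffices to control time translates of $v_\e$ in $L^2((0,T-h)\times\Oe)$ uniformly in $\e$. Writing $v_\e(t+h)-v_\e(t)=\int_t^{t+h}\partial_\tau v_\e\dd\tau$ in $H^1(\Oe)'$, pairing with itself via the embedding $L^2(\Oe)\hookrightarrow H^1(\Oe)'$, and applying Cauchy--Schwarz followed by Fubini in $(t,\tau)$ yields
\begin{equation*}
	\int_0^{T-h}\|v_\e(t+h)-v_\e(t)\|_{L^2(\Oe)}^2\dt
	\;\leq\; C\,h\,\|\partial_t v_\e\|_{L^2(0,T;H^1(\Oe)')}\,\|v_\e\|_{L^2(0,T;H^1(\Oe))}\;\leq\;C h.
\end{equation*}
The same bound transfers to $\Ee v_\e$ on $\Omega$; together with the Rellich compactness $H^1(\Omega)\hookrightarrow\hookrightarrow L^2(\Omega)$, a Fr\'echet--Kolmogorov/Simon compactness theorem then gives, along a further subsequence, $\Ee v_\e\to v_0$ strongly in $L^2((0,T)\times\Omega)$.

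For $\partial_t v_0\in L^2(0,T;H^1(\Omega)')$, I test $\partial_t v_\e$ against the restriction $(\xi\varphi)|_{\Oe}$ of arbitrary $\xi\in C_c^\infty(0,T)$ and $\varphi\in H^1(\Omega)$: the uniform dual bound on $\partial_t v_\e$ gives $\bigl|\int_0^T\int_{\Oe}v_\e\,\varphi\,\xi'\dxt\bigr|\leq C\|\xi\|_{L^2}\|\varphi\|_{H^1(\Omega)}$, and passing to the limit with the strong $L^2$-convergence of $\Ee v_\e$ combined with the weak $L^2$-convergence $\one{\Oe}\rightharpoonup|\Yp|$ yields $\bigl||\Yp|\int_0^T\int_\Omega v_0\,\varphi\,\xi'\dxt\bigr|\leq C\|\xi\|_{L^2}\|\varphi\|_{H^1(\Omega)}$; by density of tensor products this defines $\partial_t v_0\in L^2(0,T;H^1(\Omega)')$. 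For the gradient, I would invoke the standard two-scale gradient compactness theorem (\cite{All92,Ngu89,LNW02}, extended to time--space oscillations as in \cite{Hol97}) applied to the $L^2(0,T;H^1(\Omega))$-bounded sequence $(\Ee v_\e)$ whose strong $L^2$-limit $v_0$ is independent of $(s,y)$: this produces $v_1\in L^2((0,T)\times\Omega\times S;H^1_\#(Y))$ (normalised by $\int_Y v_1\dy=0$) with $\nabla_x(\Ee v_\e)\tsw{2}\nabla_x v_0+\nabla_y v_1$. I expect the time-translate estimate to be the main obstacle: it rests on the Gelfand triple $H^1(\Oe)\hookrightarrow L^2(\Oe)\hookrightarrow H^1(\Oe)'$ having embedding constants uniform in $\e$ and on the time independence of $\Ee$, which together allow one to trade a time derivative for a spatial one uniformly in $\e$; the remainder is standard bookkeeping with two-scale convergence.
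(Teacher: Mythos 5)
Your proposal is correct and follows essentially the same route as the paper. The core time-translate estimate $\int_0^{T-h}\|v_\e(t+h)-v_\e(t)\|_{L^2(\Oe)}^2\dt\leq C\,h$ is derived exactly as in the paper (by representing the increment through $\int_t^{t+h}\partial_\tau v_\e\dd\tau$ in the duality pairing, applying Cauchy--Schwarz, and rearranging the order of integration, which is the content of the reference to \cite[Lemma~9]{GNK16}); you transfer it to $\Ee v_\e$ using that $\Ee$ is linear, time-independent and $L^2$-nonexpansive, and you then invoke a Simon-type compactness theorem, which is precisely the role of \cite[Lemma~15]{NA23} in the paper. The identification of the macroscopic part of the two-scale gradient limit with the strong $L^2$-limit $v_0$, and the consequence that it is $s$-independent so $v_0\in L^2(0,T;H^1(\Omega))$, also matches the paper's concluding step. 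The one genuine addition in your write-up is the explicit verification that $\partial_t v_0\in L^2(0,T;H^1(\Omega)')$ via testing against tensor products $\xi\varphi$, using the strong convergence of $\Ee v_\e$ together with $\one{\Oe}\rightharpoonup|\Yp|$ to pass to the limit in the duality bound, and density of tensor products in $L^2(0,T;H^1(\Omega))$; the paper's proof asserts this membership in the statement but does not spell out the argument, so this is a welcome supplement rather than a different method.
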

\begin{proof}
	Following \cite[Lemma 9]{GNK16}, we obtain for a.e.~$t \in (0,T)$ and $h\in (0,T)$ with $t+h \in (0,T)$,
	\begin{align*}
		\|\ve(t + h) -\ve\|_{L^2(\Oe)}^2 \leq \sqrt{h} \| \ve(t+h) -\ve(t) \|_{H^1(\Oe)} \|\partial_t \ve\|_{L^2(t,t+h, H^1(\Oe)')}
	\end{align*}
	After integration over $(0,T-h)$, we apply the H\"older inequality and get
	\begin{align}
		\begin{aligned}\label{eq:proof:ve+dtve<C:StrongCompact}
			\|\ve&(\cdot +h) -\ve(t) \|_{L^2(0,T-h;L^2(\Oe))}^2
			\\
			&\leq \sqrt{h} \int\limits_0^{T-h} \| \ve(t+h) -\ve(t) \|_{H^1(\Oe)} \|\partial_t \ve\|_{L^2(t,t+h;H^1(\Oe)')} \dt
			\\
			&\leq \sqrt{h} 2 \| \ve\|_{L^2(0,T;H^1(\Oe))} \Big( \int\limits_0^{T-h} \|\partial_t \ve\|_{L^2(t,t+h;H^1(\Oe)')}^2 \dt \Big)^{1/2}.
		\end{aligned}
	\end{align}
	By rearranging the order of integration in the last term, we can estimate it by
	\begin{align*}
		\int\limits_0^{T-h} \|\partial_t \ve\|_{L^2(t,t+h;H^1(\Oe)')}^2 \dt &= \int\limits_0^{T-h} \int\limits_t^{t+h} (\partial_t \ve(\tau))^2 \dd \tau \dt = \int\limits_0^{h} \int\limits_{\tau}^{T-h+\tau} (\partial_t \ve(t))^2 \dt \dd \tau 
		\\
		&\leq h\| \partial_t \ve\|_{L^2(0,T;H^1(\Oe)')}^2.
	\end{align*}
	Together with the boundedness of $\ve$ and $\partial_t \ve$, we can estimate \eqref{eq:proof:ve+dtve<C:StrongCompact} further and get
	\begin{align*}
		\|\ve(\cdot +h) -\ve \|_{L^2(0,T-h;L^2(\Oe))}^2 &\leq 2 h \| \ve\|_{L^2(H^1(\Oe))} \| \partial_t \ve\|_{L^2(0,T;H^1(\Oe)')} 
		\leq 2h C.
	\end{align*}
	Having this uniform convergence, Lemma \cite[Lemma 15]{NA23} provides the strong convergence of $\Ee \ve$ to some $v_0 \in L^2((0,T) \times \Omega)$.

	For some subsequence, the two-scale convergence of $\nabla (\Ee \ve)$ to $\nabla_x w_0 + \nabla_y w_1$ for some $w_0 \in L^2((0,T) \times S; H^1(\Omega))$, $w_1 \in L^2((0,T) \times \Omega \times S;H^1_\#(Y))$ can be shown as in \cite{All92} or \cite[Theorem 13]{LNW02}. Then, the two-scale limits $w_0$ and $v_0$ can be identified. Consequently $\wn$ is independent of $s \in S$ and $\vn \in L^2(0,T;H^1(\Omega))$
\end{proof}

\begin{lemma}\label{lem:CompactnessH1-Omega)}
	Let $v_\e \in L^2(0,T;H^1(\Oe))$ be a bounded sequence, i.e.~there exists a constant $C>0$ such that
	\begin{align}
		\|v_\e \|_{L^2(0,T;H^1(\Oe))} \leq C.
	\end{align}
	Then, there exists $v_0 \in L^2((0,T)\times S;H^1(\Omega))$, $v_1 \in L^2((0,T) \times \Omega \times S;H^1_\#(Y))$ and a subsequence $v_\e$ and such that
	\begin{align}
		\begin{aligned}\label{eq:CompactnessTheorem:v,grad:s-abh.}
			& \Ee v_\e \tx \tsw{2} v_0 \txs\, ,
			\\
			&\nabla_x(\Ee v_\e)\tx \tsw{2} \nabla_x v_0\txs +\nabla_y v_1\txsy \, .
		\end{aligned}
	\end{align}
\end{lemma}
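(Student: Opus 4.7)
The plan is to reduce to standard two-scale compactness on the fixed domain $\Omega$ via the extension operator, and then identify the structure of the two-scale limits by the usual oscillating test function arguments. The statement is essentially the $s$-parametrised analogue of Lemma \ref{lem:CompactnessStrongConvergence}: without a uniform bound on $\partial_t \ve$, no Aubin--Lions-type improvement is available and the dependence of the limit $\vn$ on the fast time variable $s$ simply propagates through the two-scale limit.

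First I would apply Lemma \ref{lem:ExtensionOperator} to see that $\Ee \ve$ is bounded in $L^2(0,T;H^1(\Omega))$. The two-scale compactness theorem for sequences oscillating in both fast space and fast time (see \cite{Hol97}) then supplies, along a subsequence, limits $\vn \in L^2((0,T) \times \Omega \times S \times Y)$ and $\xi \in L^2((0,T) \times \Omega \times S \times Y)^d$ with $\Ee \ve \tsw{2} \vn$ and $\nabla_x(\Ee \ve) \tsw{2} \xi$. To show that $\vn$ is $y$-independent, I would test with $\varphi \in C^\infty_c((0,T) \times \Omega; C^\infty_\#(S \times Y;\R^d))$ using the integration by parts identity
\begin{align*}
\intTO \e \nabla_x(\Ee \ve) \cdot \varphi\txe \dxt = -\intTO \Ee \ve \left[\e (\div_x \varphi) + (\div_y \varphi)\right]\txe \dxt.
\end{align*}
The left-hand side vanishes as $\e \to 0$ thanks to the uniform bound on $\nabla_x(\Ee \ve)$, while the right-hand side converges to $-\intTOSY \vn \, \div_y \varphi \, \dysxt$. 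This forces $\nabla_y \vn = 0$ distributionally, and $\vn$ may thus be identified with an element of $L^2((0,T) \times \Omega \times S)$.

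For the gradient decomposition, I would first test with $y$-independent fields $\psi(t,x,s,y) = \varphi(t,x,s)\,\vec{e}_i$, which after integration by parts yields $\int_Y \xi_i \dy = \partial_{x_i} \vn$ in the distributional sense; this upgrades $\vn$ to $L^2((0,T) \times S;H^1(\Omega))$. Then, testing with $y$-periodic profiles of the form $\psi(t,x,s,y) = \varphi(t,x,s)\,\psi_1(y)$ satisfying $\div_y \psi_1 = 0$ shows that $\xi - \nabla_x \vn$ annihilates every $y$-solenoidal field of this form, and a standard de Rham-type argument on the periodic cell then produces $v_1 \in L^2((0,T) \times \Omega \times S; H^1_\#(Y)/\R)$ with $\xi = \nabla_x \vn + \nabla_y v_1$. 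No hard obstacle appears: the only mildly delicate point is the identification $\partial_{x_i} \vn = \int_Y \xi_i \dy$, which nonetheless follows directly from the $y$-constancy of $\vn$ together with the integration by parts above.
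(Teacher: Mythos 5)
Your proposal is correct and follows essentially the same route as the paper, which simply cites \cite{All92} and \cite[Theorem 13]{LNW02}; your argument — extension to $\Omega$, two-scale compactness for space--time oscillations, $y$-independence of $v_0$ via $\e$-scaled test functions, identification $\int_Y \xi_i\,\dy = \partial_{x_i}v_0$ with general test vectors, and the de Rham step with $y$-solenoidal, zero-mean periodic profiles — is exactly the standard argument behind those citations, here straightforwardly carried over with the extra fast-time parameter $s$.
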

\begin{proof}
	Lemma \ref{lem:CompactnessH1-Omega)} can be shown as in \cite{All92} or \cite[Theorem 13]{LNW02}.
\end{proof}

\begin{lemma}\label{lem:Conv:ue}
	Let $\Je$ by given by \eqref{eq:def:Pe} and $\hue$ as the solution of \eqref{eq:WeakForm:Trafo}. Then, there exists $u_0 \in L^2((0,T) \times (0,1) \times \Omega)$ with $\un \in L^2((0,T) \times (0,1) \times \Omega)^d$, $(\Jn \hun) \in L^2(0,T;H^1(\Omega))$ and $\hat{u}_1 \in L^2((0,T) \times \Omega \times S;H^1_\#(Y))$ such that for a subsequence
	\begin{subequations}
		\begin{align}
			&\Ee (\Je \hue) \tx \to (\Jn \hun) (t,x) \qquad \quad\textrm{ in } L^2((0,T) \times \Omega) \, ,
			\\
			&\widetilde{\nabla_x ( \Je \hue)} \tx = \one{\Yp}(y) \nabla_x (\Jn \hun) (t,x) + \one{\Yp}(y) \nabla_y (\Jn \hat{u}_1)(t,x,s,y) \, 
			\\
			&\widetilde{\ue}\tx \tss{2} \one{\Yp}(y) u_0(t,x,s) \, ,
			\\
			&\widetilde{\nabla_x \hue} \tx = \one{\Yp} \nabla_x \un(t,x,s) (t,x) + \one{\Yp} \nabla_y \hat{u}_1(t,x,s,y) \,
		\end{align}
	\end{subequations}
\end{lemma}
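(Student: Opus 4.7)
The strategy is to apply the two compactness results, Lemma \ref{lem:CompactnessStrongConvergence} and Lemma \ref{lem:CompactnessH1-Omega)}, to the sequences $(\Je \hue)$ and $(\hue)$, respectively, and then to identify the resulting two-scale limits using the multiplicative relation $\Je \widetilde{\hue} = \widetilde{\Je \hue}$ together with the strong two-scale convergence $\Je \tss{2} \Jn$ from Assumption \ref{ass:psie}\ref{item:A2}.

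\textbf{Step 1.} By Theorem \ref{thm:ExistenceEps}, $\Je \hue$ is uniformly bounded in $L^2(0,T;H^1(\Oe))$ with $\partial_t(\Je \hue)$ uniformly bounded in $L^2(0,T;H^1(\Oe)')$, so Lemma \ref{lem:CompactnessStrongConvergence} applies. Up to a subsequence there exist $w_0 \in L^2(0,T;H^1(\Omega))$ and $w_1 \in L^2((0,T) \times \Omega \times S;H^1_\#(Y))$ with $\Ee(\Je \hue) \to w_0$ strongly in $L^2((0,T) \times \Omega)$ and $\nabla_x \Ee(\Je \hue) \tsw{2} \nabla_x w_0 + \nabla_y w_1$. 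Because $\Jn$ is bounded away from zero (by Assumption \ref{ass:psie}\ref{item:B2} and the strong two-scale convergence of $\Je$) and independent of $y$ (by Assumption \ref{ass:psie}\ref{item:B3}, which precludes microscopic oscillations of $\Je$ in $x$), we may set $\hun := w_0/\Jn$ and $\hat{u}_1 := w_1/\Jn$ so that $w_0 = \Jn \hun$ and $w_1 = \Jn \hat{u}_1$. Multiplication by $\one{\Oe}(x) \tss{2} \one{\Yp}(y)$ turns the strong $L^2$ convergence into strong two-scale convergence of $\widetilde{\Je \hue}$ and yields the first two convergences of the lemma.

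\textbf{Step 2.} Corollary \ref{cor:estimate:ue} gives a uniform bound on $\hue$ in $L^2(0,T;H^1(\Oe))$, so Lemma \ref{lem:CompactnessH1-Omega)} delivers, along a further subsequence, $u_0 \in L^2((0,T) \times S;H^1(\Omega))$ and $\hat{u}_1^\ast \in L^2((0,T) \times \Omega \times S;H^1_\#(Y))$ with $\Ee \hue \tsw{2} u_0$ and $\nabla_x \Ee \hue \tsw{2} \nabla_x u_0 + \nabla_y \hat{u}_1^\ast$. Multiplying again by $\one{\Oe}$ gives the weak two-scale convergence of $\widetilde{\hue}$ and $\widetilde{\nabla_x \hue}$, with $\one{\Yp}$ as characteristic factor.

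\textbf{Step 3 (identification and strong two-scale convergence).} Since $\widetilde{\Je \hue} = \Je \widetilde{\hue}$, the product of the strongly two-scale convergent $\Je \to \Jn$ with the weakly two-scale convergent $\widetilde{\hue}$ gives $\widetilde{\Je \hue} \tsw{2} \one{\Yp} \Jn u_0$. Comparing with the limit from Step 1 and invoking uniqueness of the two-scale limit forces $\Jn u_0 = \Jn \hun$, hence $u_0 = \hun$; in particular $u_0$ is $s$-independent. Applying the same argument to $\nabla(\Je \hue) = \Je \nabla \hue + \hue \nabla \Je$, and using that $\Jn$ does not depend on $y$, identifies $\hat{u}_1^\ast$ with $\hat{u}_1$ (up to a $y$-constant absorbed in $\hun$). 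The strong two-scale convergence $\widetilde{\hue} \tss{2} \one{\Yp} u_0$ follows from dividing the strongly two-scale convergent $\widetilde{\Je \hue}$ by $\Je$, whose limit $\Jn$ is uniformly bounded below by $c_J > 0$, so that the strong mode is preserved.

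\textbf{Main obstacle.} The delicate point is Step 3: the strong $L^2$ limit of $\Ee(\Je \hue)$ is automatically $s$-independent, whereas the limit $u_0$ produced by Lemma \ref{lem:CompactnessH1-Omega)} is a priori $s$-dependent. Reconciling the two via the multiplicative relation $\Je \widetilde{\hue} = \widetilde{\Je \hue}$ crucially depends on Assumption \ref{ass:psie}\ref{item:B3}, which ensures that $\Jn$ has no $y$-oscillation, and on the uniform positivity of $\Jn$, so that multiplication and division by $\Je$ respect both strong $L^2$ and strong two-scale convergence.
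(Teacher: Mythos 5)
Your proof is correct and follows essentially the same route as the paper: you apply Lemma~\ref{lem:CompactnessStrongConvergence} to $\Je\hue$ and Lemma~\ref{lem:CompactnessH1-Omega)} to $\hue$, then reconcile the two sets of limits via the strong two-scale convergence of $\Je$ (and of $\one{\Oe}$) and the uniqueness of the two-scale limit, finally passing to strong two-scale convergence of $\widetilde{\hue}$ by dividing by $\Je$. One small slip in Step~3: the identification $u_0 = \hun = w_0/\Jn$ shows that $\Jn u_0$ (equivalently $\Theta u_0$) is independent of $s$, not that $u_0$ itself is -- as you correctly emphasise in your ``main obstacle'' paragraph, $u_0$ remains $s$-dependent in general.
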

\begin{proof}
	We combine the a priori estimates for $\Je \hue$ given by \eqref{eq:est:Je-ue}--\eqref{eq:est:dt:Je-ue} with the compactness result Lemma~\ref{lem:CompactnessStrongConvergence} and
	the estimate on $\hue$ given by \eqref{eq:est:ue} with Lemma~\ref{lem:CompactnessH1-Omega)}. We obtain $v_0 \in L^2((0,T) \times \Omega)$ and $v_1 \in L^2((0,T) \times \Omega \times S;H^1_\#(Y)/\R))$ as well as $\hun \in L^2((0,T) \times S ;H^1(\Omega))$ and $\hat{u}_1 \in L^2((0,T) \times \Omega \times S;H^1_\#(Y))$ such that for a subsequence
	\begin{align*}
		&\Ee (\Je \hue) \tx \to v_0(t,x) \qquad \quad\textrm{ in } L^2((0,T) \times \Omega) \, ,
		\\
		&\nabla_x ( \Ee (\Je \hue)) \tx = \nabla_x v_0(t,x) + \nabla_y v_1(t,x,s,y) \, .
		\\
		&\Ee (\hue) \tx \tsw{2} \hun \txs \, ,
		\\
		&\nabla_x ( \Ee \hue) \tx = \nabla_x \hun (t,x,s) + \nabla_y \hat{u}_1(t,x,s,y) \, .
	\end{align*}
	To identify $\vn = \Jn \hun$ and $v_1 = \Jn \hat{u}_1$ in $\Yp$, we use the strong two-scale convergence of $\one{\Oe}$, which gives
	\begin{align*}
		\Je \widetilde{\hue} =
		\one{\Oe} \Ee (\Je \hue) \tss{2} \one{\Yp} \vn(t,x) \, ,
		\\
		\widetilde{\hue} =
		\one{\Oe} \Ee \hue \tss{2} \one{\Yp} \hat{u}_0(t,x,s) \,.
	\end{align*}
	With the strong two-scale convergence of $\Je \tss{2} \Jn$ and the uniqueness of the two-scale limit, we can identify $\vn = \Jn \hun$.
	
	Similarly, we get
	\begin{align*}
		\nabla_x \widetilde{\ue}\tx 
		&= \Je^{-1}\tx\widetilde{\nabla_x (\Je u_\e)}\tx + \nabla_x J_\e^{-1}\tx (\Je \widetilde{u_\e})\tx
		\\
		&\to \Jnm\txs \one{\Yp} \big( \nabla_x \vn\tx + \nabla_y v_1 \txsy \big)
		+
		\nabla_x \Jnm\txs \one{\Yp}\vn\tx
		\\
		&=
		\one{\Yp} \big(\nabla_x (\Jnm\txs \vn \tx) + \nabla_y (\Jnm v_1)\txsy \big)
	\end{align*}
	Consequently, $v_1 = \Jn \hat{u}_1$ in $\Yp$.
\end{proof}

\section{Identification of the limit problem}\label{sec:To-Scale-LimitIdent}
In this section, we pass to the limit $\e \to 0$ in \eqref{eq:WeakForm:Trafo}, which leads to the the following two-scale limit problem: 

Find $(\hun, \huo) \in L^2((0,T) \times S;H^1(\Omega)) \times L^2((0,T) \times \Omega \times S; H^1_\#(\Yp))$, with 
$(\Theta \hun) \in L^2(0,T;H^1(\Omega)) \cap H^1(0,T;H^1(\Omega)')$ such that
\begin{subequations}\label{eq:ts-limit-eq:transformed}
	\begin{align}
		\begin{aligned}\label{eq:ts-limit-eq:transformed:1}
			&\intT \langle \partial_t (\Theta \hun)(t), \varphi_0(t) \rangle_{H^1(\Omega)', H^1(\Omega)} \dt
			\\
			&+
			\intTOSYp \big( \An D \PnmT(\nabla_x \hun + \nabla_y \huo ) 
			+
			\An \partial_s \psin \hun\big)
			\cdot (\nabla_x \varphi_0 + \nabla_y \varphi_1) 
			\\
			=&
			\intTOSYp \Jn \hfn \varphi_0
			+
			\intTOSG \Jn \|\PnmT \hat{\nu}\| \hgn \, \varphi_0 \, ,
		\end{aligned}
		\\
		\begin{aligned}
			\label{eq:ts-limit-eq:transformed:2}
			(\Jn\hun)(0) &= \Jn(0,\cdot,0)\hunin \, \qquad \textrm{ in } L^2(\Omega) \,,\hspace{3.3cm}
		\end{aligned}
		\\
		\begin{aligned}\label{eq:ts-limit-eq:transformed:3}
			\Theta &\coloneqq |\Yp| \Jn \hspace{7cm}
		\end{aligned}
	\end{align}
\end{subequations}
for all $(\varphi_0, \varphi_1) \in L^2(0,T;H^1(\Omega)) \times L^2((0,T) \times \Omega \times S;H^1_\#(Y))$.

We note that $(\Theta \hun) \in L^2(0,T;H^1(\Omega))$ means in particular that $(\Theta \hun)$ is independent of $s \in S$, i.e.~it does not has oscillation with respect to the microscopic time $s \in S$. The microscopic time oscillations of $\Theta$ and $\hun$ cancel each other.

To pass to the limit $\e \to 0$ in \eqref{eq:WeakForm:Trafo}, we use the two-scale convergence of the transformed data and the transformation coefficients which is given in Lemma \ref{lem:Two-scale-conv:Data:Trafo} and Lemma~\ref{lem:Two-scale-conv:Coef}, respectively.

To pass to the limit $\e \to 0$ in \eqref{eq:WeakForm:Trafo}, we use the two-scale convergence of the transformed data and the transformation coefficients which is given in Lemma \ref{lem:Two-scale-conv:Data:Trafo} and Lemma~\ref{lem:Two-scale-conv:Coef}, respectively.
The two-scale convergence can be transferred from the untransformed sequence by the following equivalence.
\begin{lemma}\label{lem:TwoScaleEquiv}
	Let $p \in (1,\infty)$. Assume that $\psie$ and $\psin$ satisfy 
	\ref{ass:psie}\ref{item:R1}--\ref{item:R2}, \ref{item:B1}--\ref{item:B2}, \ref{item:L1}--\ref{item:L4}, \ref{item:A1}--\ref{item:A3}. Let $\ue \in L^p(\OeT)$, $\hue \in L^p((0,T) \times \Oe)$ with $\hue\tx = \ue(t,\psie\tx)$ and $\un \in L^p((0,T)\times \Omega \times S \times \Yp)$, $\hun \in L^p(\Omega_0^T)$ with $\hun\txsy = \un(t,x,s, \psin\txsy)$
	\begin{align*}
		\widetilde{\ue} &\tsw{p} \widetilde{\un}  && \textrm{ if and only if }&&\widetilde{\hue} \tsw{p} \widetilde{\hun} \, ,
		\\
		\widetilde{\ue} &\tss{p} \widetilde{\un}  && \textrm{ if and only if }&&\widetilde{\hue} \tss{p} \widetilde{\hun} \, . 
	\end{align*}
\end{lemma}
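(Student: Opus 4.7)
The plan is to prove the equivalence by changing variables $x \mapsto \psie(t,x)$ in the integral that defines two-scale convergence, then transferring the convergence through the resulting ``twisted'' test function. By \ref{item:R2}, $\psie(t,\cdot)$ is a diffeomorphism of $\overline{\Omega}$ onto itself that maps $\Oe$ to $\Oe(t)$, so $\widetilde{\ue}(t,\psie\tx) = \widetilde{\hue}\tx$ for a.e.~$\tx$, which makes the extension by zero compatible with the transformation.

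For any admissible test function $\varphi \in L^q((0,T)\times\Omega;C_\#(S\times Y))$, the substitution gives
\[
\intTO \widetilde{\ue}\tx\, \varphi\txe \dxt = \intTO \widetilde{\hue}\tx\, \psi_\e\tx \dxt,
\]
where $\psi_\e\tx := \varphi\bigl(t,\psie\tx, \tfrac{t}{\e}, \tfrac{\psie\tx}{\e}\bigr)\,\Je\tx$. The first key claim is that $\psi_\e$ strongly two-scale converges to $\psi_0\txsy := \varphi(t,x,s,\psin\txsy)\,\Jn\txs$. This uses \ref{item:A2} (for $\Jn$, which is $y$-independent thanks to \ref{item:B2}--\ref{item:B3}), the uniform convergence $\psie \to x$ from \ref{item:B1}, and, most crucially, the decomposition $\psie\tx/\e = x/\e + (\psie\tx-x)/\e$ together with \ref{item:A1} and the $Y$-periodicity of $\varphi$ in its last slot (consistent with \ref{item:L3}) to identify the microscopic limit of the fourth argument as $\psin\txsy$. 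Establishing this composition-style strong two-scale convergence is the delicate point; it follows by first taking $\varphi$ smooth, using uniform continuity to swap limits with evaluation, and then extending to general admissible $\varphi$ by a density argument.

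Given this, if $\widetilde{\hue} \tsw{p} \widetilde{\hun}$, the weak-strong pairing of two-scale convergences yields
\[
\intTO \widetilde{\hue}\,\psi_\e \dxt \to \intTOSY \widetilde{\hun}\txsy\, \varphi(t,x,s,\psin\txsy)\, \Jn\txs \dysxt.
\]
Performing the inner substitution $z = \psin(t,x,s,y)$ (whose Jacobian is $\Jn$, which cancels the factor, and which maps $\Yp$ onto $\Yptxs$) and applying $\hun\txsy = \un(t,x,s,\psin\txsy)$ turns the right-hand side into $\intTOSY \widetilde{\un}\,\varphi\, \dysxt$. By uniqueness of the two-scale limit this gives $\widetilde{\ue} \tsw{p} \widetilde{\un}$. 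The reverse implication is symmetric, using $\psiem$ in place of $\psie$; its strong two-scale limit $\psinm$ and the required regularity follow from \ref{item:A1}--\ref{item:A3} and \ref{item:L1}--\ref{item:L2} via inverse-function arguments as in \cite{AA23}.

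For the strong two-scale equivalence it suffices to add norm convergence. The same change of variables gives $\|\widetilde{\ue}\|_{L^p}^p = \intTO |\widetilde{\hue}|^p\,\Je \dxt$ and, at the two-scale level, $\intTOSY |\widetilde{\hun}|^p\,\Jn \dysxt = \|\widetilde{\un}\|_{L^p((0,T)\times\Omega\times S\times Y)}^p$; combining this with the weak two-scale convergence just established promotes it to the strong one. The main obstacle throughout is the composition step for $\psi_\e$; the rest is clean change of variables supported by the regularity and asymptotic assumptions on $\psie$.
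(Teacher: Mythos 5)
The paper's proof consists entirely of a citation to \cite[Theorems~3.8 and~3.14]{AA23}, applied with the microscopic transformation $(t,x)\mapsto(t,\psie(t,x))$ and the limit transformation $(s,y)\mapsto(s,\psin(t,x,s,y))$; your change-of-variables argument, with the pulled-back test function $\psi_\e(t,x)=\varphi(t,\psie(t,x),t/\e,\psie(t,x)/\e)\Je(t,x)$ and its strong two-scale limit $\varphi(t,x,s,\psin(t,x,s,y))\Jn$, is precisely the mechanism behind those cited results, so this is essentially the same approach. Two minor remarks: $y$-independence of $\Jn$ rests on \ref{item:B3}, which is not among this lemma's hypotheses and is in any case not needed here, since the substitution $z=\psin(t,x,s,y)$ absorbs the $\Jn$ factor whether or not it depends on $y$; and the ``delicate point'' you flag (strong two-scale convergence of the composed test function, including the decomposition $\psie(t,x)/\e=x/\e+\e^{-1}(\psie(t,x)-x)$ and the density argument) is indeed the technical core that \cite{AA23} carries out in detail.
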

\begin{proof}
	The equivalence of the weak two-scale convergence is shown in \cite[Theorem 3.8]{AA23} and the equivalence of the strong two-scale convergence is shown in \cite[Theorem 3.14]{AA23}. Note that we apply the results of \cite{AA23} for the microscopic transformation $\tx \mapsto (t,\psie\tx)$ and the limit transformation $(\tx, (s,y)) \mapsto (\tx , (s, \psin\txsy))$.
\end{proof}
\begin{lemma}\label{lem:Two-scale-conv:Data:Trafo}
	Let $\fn$ and $\gn$ be given by Assumption \ref{ass:data}.
	Define 
	\begin{align*}
		&\hfn \tx \coloneqq \fn(t, \psin\tx) \textrm{ for a.e.~} \txsy \in (0,T) \times \Omega \times S \times \Yp \,,
		\\
		&\hgn \tx \coloneqq \gn(t, \psin\tx) \textrm{ for a.e.~} \txsy \in (0,T) \times \Omega \times S \times \Yp \,,
		\\
		&\hunin(x,y) \coloneqq \unin(\psin(0,x,0,y)) \textrm{ for a.e.~} \xy \in \Omega \times \Yp \,
	\end{align*}
	Then,
	\begin{align*}
		\widetilde{\hfe} \tsw{2} \widetilde{\hfn} \,, \qquad \qquad \hge \tsw{2} \hgn \, , \qquad \qquad \huein \tsw{2} \hunin
	\end{align*}
	where the two-scale convergence of $\hge$ holds in the sense of the two-scale convergence on surfaces (see for instance \cite{Neu96} and the two-scale convergence of $\huein$ holds for the macroscopic domain $\Omega$ instead of $(0,T) \times \Omega$ and the periodicity cell $Y$ instead of $S \times Y$.
\end{lemma}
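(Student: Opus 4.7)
All three convergences are of the same type: they transfer a two-scale convergence from an untransformed sequence to the sequence obtained by precomposing with $\psie$. My plan is therefore to reduce each case to a single invocation of Lemma~\ref{lem:TwoScaleEquiv} (or its obvious time-independent analogue for the initial datum).

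First, I would handle the bulk source. Extending by zero, Assumption~\ref{ass:data} gives $\widetilde{\fe} \tsw{2} \widetilde{\fn}$ on $(0,T)\times\Omega\times S\times Y$. Since $\psie$ and $\psin$ satisfy the regularity, uniform-bound and asymptotic hypotheses required by Lemma~\ref{lem:TwoScaleEquiv}, a direct application of that lemma with $\ue:=\fe$, $\un:=\fn$ yields $\widetilde{\hfe} \tsw{2} \widetilde{\hfn}$ with $\hfn\txsy = \fn(t,x,s,\psin\txsy)$.

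The convergence of $\hge$ is then essentially free: by the very definitions of $\hge$ and $\hgn$, the claim $\hge \tsw{2} \hgn$ in the sense of two-scale convergence on surfaces is literally the statement already stipulated in Assumption~\ref{ass:data}, so this case is merely a relabelling.

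For the initial datum the only genuine work lies in the fact that Lemma~\ref{lem:TwoScaleEquiv} is formulated in the time-dependent setting, whereas $\uein$ lives on $\Omega$ alone. I would invoke the obvious time-independent analogue of Lemma~\ref{lem:TwoScaleEquiv}, whose proof is exactly the argument of \cite[Thm.~3.8]{AA23} with $\Omega$ replacing $(0,T)\times\Omega$ and $Y$ replacing $S\times Y$, the time-free assumptions \ref{item:A4}--\ref{item:A5} playing the role of \ref{item:A1}--\ref{item:A3}. Applied to the assumed convergence $\uein \tsw{2} \chi_{\Yp(0,x,0)}(y)\unin(x)$, this transfer yields $\huein \tsw{2} \hunin$ with $\hunin(x,y) = \unin(\psin(0,x,0,y))$; the factor $\chi_{\Yp(0,x,0)}\circ\psin(0,x,0,\cdot) = \chi_{\Yp}$ is automatically absorbed by the extension-by-zero convention. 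The principal obstacle, such as it is, is purely bookkeeping: checking that the proof in \cite{AA23} goes through verbatim with the frozen-in-time asymptotic assumptions. No new analytical ideas are required.
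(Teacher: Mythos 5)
Your proposal matches the paper's proof exactly: $\widetilde{\hfe}$ and $\huein$ are handled by invoking Lemma~\ref{lem:TwoScaleEquiv} (the paper actually misreferences itself here due to a typo, but Lemma~\ref{lem:TwoScaleEquiv} is the intended citation), while $\hge$ is a pure relabelling of the hypothesis in Assumption~\ref{ass:data}. The only difference is that you spell out the time-independent analogue of Lemma~\ref{lem:TwoScaleEquiv} for the initial datum more carefully than the paper does, which is a minor but accurate clarification.
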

\begin{proof}
The two-scale convergence of $\widetilde{\hfe}$ and $\huein$ is a direct consequence of Lemma \ref{lem:Two-scale-conv:Data:Trafo}. The two-scale convergence of $\ge$ is formulated in Assumption \ref{ass:data} in terms of the two-scale convergence of $\hge$.
\end{proof}

\begin{lemma}\label{lem:Two-scale-conv:Coef}
	Let $\psie$ and $\psin$ satisfy Assumption \ref{ass:psie} and let $\Pe$, $\Je$, $\Ae$ be given by \eqref{eq:def:Pe} and $\Pn$, $\Jn$, $\An$ be given by \eqref{eq:def:Pn}.
	Then, for every $p\in (1,\infty)$, it holds
	\begin{align*}
		&\Pe \tss{p} \Pn \, , \qquad
		\Je \tss{p} \Jn \, , \qquad
		\Ae \tss{p} \An \, , \qquad
		\Pem \tss{p} \Pnm \, ,
		\\
		&\Jem \tss{p} \Jnm \, , \qquad
		\Aem \tss{p} \Anm \, , \qquad
		\partial_t \psie \tss{p} \partial_s \psin
		\\
		&\Pe(0,x) \tss{p} \Pn(0,x,0,y) \, , \qquad
		\Je(0,x) \tss{p} \Jn(0,x,0,y) \, ,
	\end{align*}
	where the latter two two-scale convergences of $\Pe(0)$ and $\Je(0)$ are formulated for a macroscopic domain $\Omega$ instead of $(0,T) \times \Omega$ and a reference cell $Y$ instead of $S \times Y$.
\end{lemma}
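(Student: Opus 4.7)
The plan is to reduce all eight convergences to the strong two-scale convergences supplied by Assumption~\ref{ass:psie}\ref{item:A2},\ref{item:A3},\ref{item:A5} via two structural properties of strong two-scale convergence: (i) under a uniform $L^\infty$ bound, $\tss{2}$ upgrades to $\tss{p}$ for every $p\in(1,\infty)$; (ii) the relation $\tss{p}$ is stable under products of uniformly $L^\infty$-bounded factors and, using (B2), under inversion of scalars bounded below. Every quantity in the lemma is then either one of the given two-scale limits or a polynomial/rational expression in them, constructed from $\Pe$ and $\Je$.

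First I would upgrade \ref{item:A2},\ref{item:A3},\ref{item:A5} from $p=2$ to arbitrary $p\in(1,\infty)$. By (L1) the candidate limits $\Pn,\partial_s\psin,\Pn(0,\cdot,0,\cdot)$ are admissible (lie in $L^\infty$ with $C_\#$ regularity in $(s,y)$), so strong two-scale convergence is equivalent to $\|f_\e - f_0(\cdot,\cdot/\e)\|_{L^2}\to 0$. Combined with the uniform bound $\|f_\e\|_{L^\infty}+\|f_0\|_{L^\infty}\le C$ from (B1) and (L1), interpolation gives $\|f_\e - f_0(\cdot,\cdot/\e)\|_{L^p}\to 0$ for all $p\in(1,\infty)$. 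This immediately yields the claimed convergences for $\Pe$, $\partial_t\psie$ and $\Pe(0,x)$.

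For $\Je=\det(\Pe)$, $\Ae=\operatorname{Adj}(\Pe)$ and $\Je(0)=\det(\Pe(0))$, which are polynomial expressions in the entries of $\Pe$ respectively $\Pe(0)$, I would invoke a product property: if $f_\e\tss{p} f_0$ and $g_\e\tss{p} g_0$ with both sequences uniformly bounded in $L^\infty$, then $f_\e g_\e\tss{p} f_0 g_0$, which follows from the triangle inequality
\[
f_\e g_\e - f_0 g_0(\cdot,\cdot/\e) = f_\e\bigl(g_\e - g_0(\cdot,\cdot/\e)\bigr) + \bigl(f_\e - f_0(\cdot,\cdot/\e)\bigr)g_0(\cdot,\cdot/\e),
\]
together with the uniform $L^\infty$ bounds. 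Iterating this gives strong two-scale convergence of any polynomial in the entries, hence of $\Je$, $\Ae$ and $\Je(0)$.

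For the reciprocals, I would first note that the pointwise bound $\Je\ge c_J$ from (B2) together with the approximating-sequence convergence transfers to $\Jn\ge c_J$ a.e.\ by passing to the limit. The elementary estimate $|a^{-1}-b^{-1}|\le c_J^{-2}|a-b|$ for $a,b\ge c_J$ then gives $\|\Jem-\Jnm(\cdot,\cdot/\e)\|_{L^p}\to 0$, i.e.~$\Jem\tss{p}\Jnm$. Finally, the cofactor identity $\operatorname{Adj}(M)M=\det(M)\1$ rewrites $\Pem=\Jem\Ae$ and $\Aem=\Jem\Pe$; one further application of the product property supplies $\Pem\tss{p}\Pnm$ and $\Aem\tss{p}\Anm$. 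The only point that requires some care, and is the closest thing to an obstacle, is the admissibility of all candidate limits as test-type integrands in the $(s,y)$-variable; this is exactly what is secured by the regularity requirements (L1)--(L4), so no serious difficulty is anticipated.
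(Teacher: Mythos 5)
Your proof is correct and follows the same route as the paper: strong two-scale convergence of $\Pe$, $\partial_t\psie$, $\Pe(0)$ comes from Assumption~\ref{ass:psie}, upgraded from $p=2$ to all $p\in(1,\infty)$ via the uniform $L^\infty$ bounds; $\Je$, $\Ae$, $\Je(0)$ follow since they are polynomials in the entries of $\Pe$; $\Jem$ via the lower bound from \ref{item:B2}; and $\Pem=\Jem\Ae$, $\Aem=\Jem\Pe$ by the product argument. The only difference is that you spell out the elementary Lipschitz-type estimate $|a^{-1}-b^{-1}|\le c_J^{-2}|a-b|$ for the reciprocal step, whereas the paper simply cites \cite[Lemma~3.3]{AA23} for that point.
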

\begin{proof}
	We note that the strong two-scale convergence of $\Pem$ and $\partial_t \psie$ is given by Assumption \ref{ass:psie}. To deduce the strong two-scale convergence of $\Je$ and $\Ae$, we rewrite them as polynomials with respect to the entries of $\Pe$. Following the proof of \cite[Lemma 3.3]{AA23}, we obtain the strong two-scale convergence of $\Jem$. Then, the strong two-scale convergence of $\Pem$, $\Aem$ follows from the fact that they are polynomials with respect to the entries of $\Pe$ and $\Jem$.
\end{proof}

\begin{thm}
	Let $\hue \in L^2(0,T; H^1(\Oe))$ be the solution of \eqref{eq:WeakForm:Trafo}. Then,
	\begin{subequations}\label{eq:ConvergenceUe}
		\begin{align}\label{eq:ConvergenceUe:1}
			&\Ee (\Je \hue) \tx \to (\Jn \hun) (t,x) \qquad \quad\textrm{ in } L^2((0,T) \times \Omega) \, ,
			\\
			&\widetilde{\nabla_x ( \Je \hue)} \tx = \one{\Yp}(y) \nabla_x (\Jn \hun) (t,x) + \one{\Yp}(y) \nabla_y (\Jn \hat{u}_1)(t,x,s,y) \, 
			\\
			&\widetilde{\ue}\tx \tss{2} \one{\Yp}(y) u_0(t,x,s) \, ,
			\\
			&\widetilde{\nabla_x \hue} \tx = \one{\Yp} \nabla_x \un(t,x,s) (t,x) + \one{\Yp} \nabla_y \hat{u}_1(t,x,s,y) \,
		\end{align}
	\end{subequations}	
	where $\hun \in L^2((0,T) \times S; H^1( \Omega))$ with $\nabla_s (\Jn \hun) = 0$ and $\huo \in L^2((0,T) \times \Omega \times S; H^1_\#(\Yp))$ with $(\Jn \huo) \in L^2((0,T) \times \Omega \times S; H^1_\#(\Yp))$ is given as the unique solution of \eqref{eq:ts-limit-eq:transformed}.
\end{thm}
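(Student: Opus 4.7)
The plan is to combine the compactness from Lemma~\ref{lem:Conv:ue} with a passage to the limit in the transformed weak form \eqref{eq:WeakForm:Trafo} via oscillating test functions, and then invoke uniqueness of \eqref{eq:ts-limit-eq:transformed} to upgrade subsequential to full-sequence convergence. Since the convergences in \eqref{eq:ConvergenceUe} are already supplied by Lemma~\ref{lem:Conv:ue}, it suffices to identify $(\hun,\huo)$ as the unique solution of \eqref{eq:ts-limit-eq:transformed}.

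First, I would verify the required regularity of the limits. The strong $L^2((0,T)\times\Omega)$-convergence in \eqref{eq:ConvergenceUe:1} shows that the limit is $(t,x)$-measurable only, and combined with the $y$-independence of $\Jn$ (by the remark following Assumption~\ref{ass:psie}) this yields $\Theta\hun = |\Yp|\,\Jn\hun \in L^2(0,T;H^1(\Omega))$ independent of $s$. The bound $\partial_t(\Theta\hun) \in L^2(0,T;H^1(\Omega)')$ then follows by weak lower semicontinuity from the uniform estimate \eqref{eq:est:dt:Je-ue}.

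To derive \eqref{eq:ts-limit-eq:transformed:1}, I would insert oscillating test functions
\begin{align*}
\varphi_\e\tx = \varphi_0\tx + \e\,\varphi_1\txe
\end{align*}
with $\varphi_0 \in C^\infty([0,T] \times \overline{\Omega})$ satisfying $\varphi_0(T,\cdot) = 0$ and $\varphi_1 \in C^\infty_c((0,T) \times \Omega; C^\infty_\#(S \times Y))$ into \eqref{eq:WeakForm:Trafo} and pass to the limit term by term. The bulk integrals are handled by pairing the weak two-scale convergences of $\widetilde{\nabla_x \hue}$ and $\widetilde{\hue}$ from \eqref{eq:ConvergenceUe} with the strong two-scale convergences of the coefficients $\Ae$, $\PemT$, $\partial_t\psie$ supplied by Lemma~\ref{lem:Two-scale-conv:Coef}. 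The source terms are treated analogously using Lemma~\ref{lem:Two-scale-conv:Data:Trafo}, with the boundary term handled via two-scale convergence on surfaces together with the strong two-scale convergence of $\Je\|\PemT\hat\nu\|$. For the time derivative I would integrate by parts against $\varphi_\e$; exploiting $s$-independence of $\Theta\hun$ and $S$-periodicity of $\varphi_1$, the contribution involving $\partial_s\varphi_1$ vanishes, leaving $\int_0^T\langle\partial_t(\Theta\hun),\varphi_0\rangle$ plus an initial term which, by Lemma~\ref{lem:Two-scale-conv:Data:Trafo} and Lemma~\ref{lem:Two-scale-conv:Coef} at $t=0$, matches \eqref{eq:ts-limit-eq:transformed:2}. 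Choosing first $\varphi_1 = 0$ and then $\varphi_0 = 0$ separates the macroscopic and microscopic parts of \eqref{eq:ts-limit-eq:transformed:1}, and a density argument extends the identity to all admissible test pairs.

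The main obstacle will be uniqueness of the limit problem \eqref{eq:ts-limit-eq:transformed}, which is what promotes the subsequential convergences to the full sequence. I would mimic the existence argument of Theorem~\ref{thm:ExistenceEps}: testing the difference of two hypothetical solutions with a suitable multiple of $(\hun,\huo)$ derived from the solution itself, exploiting the coercivity of $\Pnm D \PnmT$ on $\Yp$ (the two-scale analogue of the coercivity used in \eqref{eq:TestedWithJu}), and closing with a Gronwall argument on $\|\Theta\hun\|_{L^2(\Omega)}^2$. A standard sub-subsequence argument then upgrades the convergences in \eqref{eq:ConvergenceUe} to hold along the full sequence $\e \to 0$.
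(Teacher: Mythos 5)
Your overall strategy coincides with the paper's: use the compactness of Lemma~\ref{lem:Conv:ue}, pass to the limit in \eqref{eq:WeakForm:Trafo} by testing against oscillating test functions $\varphi_0 + \e\varphi_1\txe$, and invoke uniqueness of the limit problem to upgrade to full-sequence convergence. The passage-to-the-limit portion matches the paper essentially verbatim (including the detail that the $\partial_s\varphi_1$ contribution in the time-derivative term vanishes because $\Theta\hun$ is $s$-independent and $\varphi_1$ is $S$-periodic).

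Where you genuinely depart from the paper is in the treatment of uniqueness and of the bound $\partial_t(\Theta\hun)\in L^2(0,T;H^1(\Omega)')$. The paper obtains the latter by first choosing $\varphi_1=0$, $\varphi_0(0)=0$ in the limit identity and reading off the regularity; your appeal to ``weak lower semicontinuity from \eqref{eq:est:dt:Je-ue}'' is shakier because that bound lives in $H^1(\Oe)'$, a dual space that changes with $\e$, and transferring it to $H^1(\Omega)'$ is not automatic. The conclusion is correct but the paper's route is cleaner. More substantively, for uniqueness you propose a direct Gronwall argument on the coupled two-scale system \eqref{eq:ts-limit-eq:transformed}, which is feasible (the natural test pair is $(\varphi_0,\varphi_1)=(\chi_{(0,t)}\Theta\hun,\chi_{(0,t)}\Theta\huo)$, exploiting $y$-independence of $\Jn$ so the coercive part $\Theta\An D\PnmT=|\Yp|\Jn^2\Pnm D\PnmT$ comes out cleanly), but it requires care choosing both test components simultaneously. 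The paper instead sidesteps this entirely: it invokes the forward reference to Theorem~\ref{thm:HomEqTrafoCoord}, which shows that $u=\Theta\hun$ solves the standard parabolic problem \eqref{eq:WeakForm:Homogenised:SubCoord}, whose uniqueness is classical, and then $\hun$ and $\huo$ are recovered algebraically from $u$ via the cell-problem decomposition. This reduction is both shorter and structurally more informative than the direct energy estimate you sketch, though your approach would also close if carried out carefully.
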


\begin{proof}
	The convergences \eqref{eq:ConvergenceUe} are given by Lemma \ref{lem:Conv:ue}. It remains to identify $(\hun, \huo)$ as the solution of \eqref{eq:ts-limit-eq:transformed}.
	
	We test \eqref{eq:WeakForm:Trafo} with $\varphi\tx= \varphi_0\tx + \varphi_1\txe$ for $\varphi \in C^\infty([0,T];C^\infty(\overline{\Omega}))$ with $\varphi_0(T) = 0$ and $\varphi_1 \in D(0,T;C^\infty(\overline{\Omega}; C^\infty_\#(S \times Y)))$. We integrate the time derivative term by parts and insert the initial condition:
	\begin{align}
		\begin{aligned}\label{eq:WeakForm:Trafo-time-part-int}
			-\intTOe& \Je\tx \hue\tx \big(\partial_t\varphi_0\tx + \e \partial_t\varphi_1\txe + \partial_s\varphi_1\txe \big)\dxt
			\\
			&+
			\intOe\Je(0,x)\huein(,x) \varphi_0(0,x) 
			\dx
			\\
			&+
			\int\limits_{(0,T) \times \Oe} (\Ae D \PemT) \tx \nabla \hue\tx + \Ae\tx \partial_t \psie\tx \hue\tx 
			\\
			& \qquad \qquad \cdot \left(\nabla_x \varphi_0\tx + \e \nabla_x \varphi_1\txe +\nabla_y \varphi_1\txe \right) \dxt 
			\\
			=&
			\int\limits_{(0,T) \times \Oe} \Je\tx \hfe\tx \left(\varphi_0\tx +\e \varphi_1\txe \right)\dxt 
			\\
			&-
			\int\limits_{(0,T) \times \Ge} \Je\tx \|\PemT\tx \hat{\nu}\| \hge\tx \left(\varphi_0\tx +\e \varphi_1\txe \right) \dd \sigma_x \dt \, .
		\end{aligned}
	\end{align}
	Using the strong convergences of the coefficients and the data as well as the convergences of the unknowns given by \eqref{eq:ConvergenceUe}, we can pass to the limit in \eqref{eq:WeakForm:Trafo-time-part-int} and get
	\begin{align}
		\begin{aligned}\label{eq:proof:ident:ts-limit-transformed}
			&-\intTOSYp (\Jn \hun) \left(\partial_t \varphi_0\tx + \partial_s\varphi_1(t,x,s,y) \right)\dysxt
			\\
			&+
			\intOYp \Jn(0,x,0,y) \hunin(x) \varphi_0(0,x)+ \varphi_1(0,x,0,y) \dyx
			\\
			&+
			\intTOSYp \Big( (\An D \PnmT)\txsy(\nabla_x \hun \txs + \nabla_y \huo \txsy) 
			\\
			&\qquad \qquad +
			\An\txsy \partial_s \psin\txsy \Big)
			\cdot (\nabla_x \varphi_0\tx + \nabla_y \varphi_1\txsy) \dysxt
			\\
			=&\intTOSYp \Jn\txsy \hfn \txsy \varphi_0\tx \dysxt
			\\
			&+
			\intTOSG \Jn\| \PnmT \txsy \hat{\nu} \| \hgn \txsy \varphi_0\tx \dd \sigma_y \dsxt\,.
		\end{aligned}
	\end{align}
	Choosing $\varphi_1 = 0$ and $\varphi_0(0) = 0$ in \eqref{eq:proof:ident:ts-limit-transformed}, shows that $\partial_t (\Jn \hun) \in L^2(0,T; H^1(\Omega)')$. 
	We use again general test functions $\varphi_0$ and $\varphi_1$ as above and integrate the $t$-time derivatives in the first term of \eqref{eq:proof:ident:ts-limit-transformed} by parts. 
	For the $s$-derivative, we note that $(\Jn \hun)$ is independent of $s$. Moreover, $\varphi_1$ is $S$-periodic and, hence, this term is $0$. Substituting $\Theta \hun = |\Yp| \Jn = \intYp \Jn$ leads to \eqref{eq:ts-limit-eq:transformed}.
	
	By a density argument, \eqref{eq:ts-limit-eq:transformed} holds for all $\varphi_0 \in L^2(0,T;H^1(\Omega))$ and $\varphi_1 \in L^2((0,T) \times \Omega \times S;H^1(\Yp))$.
	
	Moreover, with Theorem \ref{thm:HomEqTrafoCoord} from below, we can identify $\hun$ with the unique solution of \eqref{eq:WeakForm:Homogenised:SubCoord}. Thus, the solution of \eqref{eq:ts-limit-eq:transformed} is unique and the convergence holds for the whole sequence. 
\end{proof}

\section{Separation of the micro- and macroscopic variables}\label{sec:SeparationVariables}
In this section we separate the micro- and macroscopic variables in \eqref{eq:ts-limit-eq:transformed}.
In a first step, we separate the spatial variables. This leads to \eqref{eq:PreHomogenised:SubCoord}, which still contains the microscopic time variable $s$. One can already see the advection term in \eqref{eq:PreHomogenised:SubCoord}, which arises from the transport which is induces by the pulsating microstructure.

In a second step, we separate also the micro- and macroscopic time variables in \eqref{eq:PreHomogenised:SubCoord}. However, the diffusive flux is proportional to $\nabla \hun$, which depends on the microscopic time variable. To circumvent this issue, we rewrite the equation in terms of $(\Theta \hun)$, where $\Theta$ is the local porosity. Since $(\Theta \hun)$ is independent of the microscopic time variable, this allows us to separate the micro- and macroscopic time variables. In order to rewrite the diffusive flux in terms of $\nabla (\Theta\hun)$, we introduce an second artificial transport term. The result is \eqref{eq:Homogenised:SubCoord}.

\subsection{Separation of the spatial variables}
The separation of the spatial macro- and microscopic variables leads to the following system:
\begin{subequations}\label{eq:PreHomogenised:SubCoord}
	\begin{align}
		\partial_t (\Theta \hun) - \div( \hat{D}^* \nabla \hun - \hat{V}^* \hun) &= \hat{F}^* + \hat{G}^*&& \textrm{in } (0,T) \times \Omega \times S \, ,
		\\
		(\hat{D}^* \nabla \hun - \hat{V}^* \hun) \cdot n &= 0 && \textrm{in } (0,T) \times \partial \Omega \times S \, ,
		\\
		\partial_s (\Theta \hun) &= 0 &&\textrm{in } (0,T) \times \Omega \times S \, ,
		\\
		y &\mapsto \hun\txsy &&Y\textrm{-periodic} \, ,
		\\
		\hun(0) &= \hun^{\init} &&\textrm{in } \Omega \,.
	\end{align}
\end{subequations}
where the porosity $\Theta$, the effective diffusion coefficient $\hat{D}^*$, the new advection velocity $\hat{V}^*$ and the bulk source terms $\hat{F}$ and $\hat{G}$ are given by
\begin{subequations}\label{eq:PreHomogenisedCoeffcients:SubCoord}
	\begin{align}
		&\Theta\txs \coloneqq \int\limits_{\Yp} \Jn\txs \dy = |\Yp| \Jn\txs \,,
		\\
		&\hat{D}_{ij}^*\txs \coloneqq \int\limits_{\Yp} (\An D \PnmT)\txsy (e_j + \nabla_y \hat{\zeta}_j\txsy) \cdot e_i \dy \,,
		\\\label{eq:PreHomogenisedCoeffcients:SubCoord:V}
		&\hat{V}^*\txs \coloneqq -\intYp (\Jn D \PnmT)\txsy \nabla_y \hat{\zeta}_0\txsy \dy \,,
		\\
		&\hat{F}\tx \coloneqq \intSYp \Jn\txsy \hfn\txsy \dys \,,
		\\
		&\hat{G}\tx \coloneqq \intSG \Jn\txsy \| \PnmT\txsy \hat{\nu} \txsy \|\hgn \txsy \dd \sigma_y \ds \,.
	\end{align}
\end{subequations}
The effective diffusion coefficient $\hat{D}^*$ and the advection velocity $\hat{V}^*$ depend on the solutions $\hat{\zeta}_i$ for $i \in \{1,\dots, n\}$ and $\hat{\zeta}_0$ of the following cell problems:
\begin{align}\label{eq:CellProblem:I:SubCoord}
	\begin{aligned}
		-\div( \An D \PnmT(e_j + \nabla_y \hat{\zeta}_j)) &= 0 &&\textrm{in } \Yp\,,
		\\
		\An D \PnmT(e_j + \nabla_y \hat{\zeta}_j) \cdot \hat{\nu} &= 0 &&\textrm{on } \Gamma \,,
		\\
		y &\mapsto \hat{\zeta}_j &&Y-\textrm{periodic}
	\end{aligned}
\end{align}
and 
\begin{align}\label{eq:CellProblem:0:SubCoord}
	\begin{aligned}
		-\div( \An (D \PnmT \nabla_y \hat{\zeta}_0 + \partial_s \psin)) &= 0 &&\textrm{in } \Yp \,,
		\\
		\An (D \PnmT \nabla_y \hat{\zeta}_0 + \partial_s \psin) \cdot \hat{\nu} &= 0 &&\textrm{on } \Gamma \,,
		\\
		y &\mapsto \hat{\zeta}_0 &&Y-\textrm{periodic}.
	\end{aligned}
\end{align}

The weak form of \eqref{eq:PreHomogenised:SubCoord} is given by: 
Find $\hun \in L^2((0,T) \times S; H^1(\Omega))$ with $\partial_t (\Theta \hun) \in L^2(0,T;H^1(\Omega)')$ such that
\begin{subequations}
	\begin{align}
		\label{eq:WeakForm:PreHomogenised:SubCoord}
		\begin{aligned}
			&\intTO \partial_t (\Theta \hun)\tx \varphi\tx \dxt 
			\\
			&+ \intTOS \hat{D}^*\txs \nabla \hun\txs - \hat{V}^*\txs \hun \txs \ds \cdot \nabla_x \varphi\tx \dxt 
			\\
			&=
			\intTO (\hat{F}\tx + \hat{G} \tx ) \,\varphi\tx \dxt \,,
		\end{aligned}
		\\
		\begin{aligned}
			&\intS \Theta \txs \hun \txs \partial_s \varphi_1(s) \ds= 0 \, , \hspace{5cm}
		\end{aligned}
		\\
		\begin{aligned}
			&(\Theta \hun)(0) = \Theta(0) \hunin \, \hspace{7cm}
		\end{aligned}
	\end{align}
\end{subequations}
for all $\varphi \in L^2(0,T;H^1(\Omega)$ and $\varphi_1 \in D((0,1))$.

The weak forms for the cell problems \eqref{eq:CellProblem:I:SubCoord} and \eqref{eq:CellProblem:0:SubCoord} are given by \eqref{eq:WeakForm:CellProblem:I:SubCoord} and \eqref{eq:WeakForm:CellProblem:0:SubCoord}, respectively:

For $j \in \{1, \dots, d\}$, find $\hat{\zeta}_j \in L^\infty((0,T) \times \Omega \times (0,1);H^1_\#(\Yp)/\R)$ such that
\begin{align}\label{eq:WeakForm:CellProblem:I:SubCoord}
	\intYp (\An D \PnmT)\txsy(e_j + \nabla_y \hat{\zeta}_j\txsy) \cdot \varphi(y) \dy &= 0 
\end{align}
for all $\varphi \in H^1_\#(\Yp))$ and a.e.~$\txs \in (0,T) \times \Omega \times S$.

Find $\hat{\zeta}_0 \in L^\infty((0,T) \times \Omega \times (0,1);H^1_\#(\Yp)/\R)$ such that
\begin{align}\label{eq:WeakForm:CellProblem:0:SubCoord}
	\intYp (\An\txsy(\partial_s \psin\txsy + D \PnmT \nabla_y \hat{\zeta}_0\txsy)) \cdot \varphi(y) \dy &= 0 
\end{align}
for all $\varphi \in H^1_\#(\Yp))$ and a.e.~$\txs \in (0,T) \times \Omega \times S$.

\subsection{Separation of the time variables}\label{subsec:SeparationTimeVariables}

In this section, we separate the micro- and macroscopic time variables by introducing the counter advection $W_\hom$.

To understand the origin and the meaning of the two advective transport velocities, we note that the two-scale limit $\un$ of $\ue$ can contain oscillations on the reference time oscillation interval $S=(0,1)$. Thus, one can not directly formulate the resulting advection--diffusion equation in terms of only $\un$ solely on the macroscopic time--space domain $(0,T) \times \Omega$ but has to use the macroscopic space, macro- and microscopic time domain $(0,T) \times \Omega \times S$ instead. This equation is given by \eqref{eq:PreHomogenised}. Compared to the two-scale limit, the weak limit $u = \intS\intYptxs \un \dy = \intS |\Yptxs| \un\txs \dy $ of $\ue$ averages over the time interval $S$ and compensates the oscillation of $\un$ by the oscillations of the pore volume $|\Yptxs|$.
Both quantities $\un$ and $u$ have physical relevant meanings. For a solute diffusion process $\un$ represents the concentration of the solute, while $u$ represents the effective local mass of a concentration, since it averages over the porosity. The homogenised equation naturally depends on both quantities, since the time-derivative is formulated in terms of $u$, while the diffusive flux is proportional to the concentration $\un$ with a Diffusion coefficient depending on the microstructure $\Yptxs$.
For a fixed periodic microstructure with reference pore space $\Yp$ and porosity $\Theta = |\Yp|$, this issue can be easily tackled by rewriting the effective diffusive flux by~$-D^* \nabla \un = -(\Theta^{-1} D^*) \nabla u$ for $u = \Theta \un$. In the case of macroscopically varying porosity this reformulation causes an artificial advective counterflow term which proportional to the gradient of the inverse of the porosity, i.e.~
\begin{align}\label{eq:ExplanationAdvectiveCounterflow}
	-D^* \nabla \un = -(\Theta^{-1} D^*) \nabla u - D^* \nabla (\Theta^{-1}) u \eqqcolon -D_\hom \nabla u +  W_\hom u \,.
\end{align}
In the case of an slowly evolving and not pulsating microstructure, this reformulation becomes not necessary in order to formulate the homogenised equations. However, in the case of a pulsating microstructure where also the porosity $\Theta\txs = |\Yptxs|$ is oscillating with respect to the times $s \in S$, only $u$ and not $\un$ are independent of the microscopic time interval. Thus, the reformulation \eqref{eq:ExplanationAdvectiveCounterflow} and the introduction of the artificial advective counter flow becomes highly useful.

In case the geometry of the microstructure is oscillating with respect to time while the porosity is only changing with respect to the macroscopic time but not oscillating with respect to the microscopic time, this introduction of the counter flow is not necessary and one can derive also the homogenised system \eqref{eq:PreHomogenised:NoPulsTheta}, instead.

We define
\begin{align*}
	u \coloneqq \Theta \hun \, , \qquad u^\init \coloneqq \Theta(0, \cdot, 0) \hunin \, .
\end{align*} 
Having the convergence $E_\e (\Je \hue) \to \Jn \hun$ given by \eqref{eq:ConvergenceUe:1}, we obtain in particular the strong two-scale convergence 
$E_\e (\Je \hue) \tss{2} \Jn \hun$. Together with the strong two--scale convergence of $\one{\Oe}$, we obtain $\widetilde{\Je \hue} = \one{\Oe}E_\e (\Je \hue) \tss{2} \one{\Yp} \Jn \hun$. Taking the $S \times Y$-integral over the two-scale limit, the two-scale convergence implies the weak convergence
\begin{align*}
	\widetilde{\Je \hue} \rightharpoonup \intS \intY \one{\Yp} \Jn \hun = \intY \one{\Yp} \Jn \hun= |\Yp|\Jn \hun = \Theta u_0 = u \,.
\end{align*}

We note that $u$ is independent of $s\in (0,T)$. 
The product rule gives
\begin{align*}
	\hat{D}^* \Theta^{-1} \nabla_x u + \hat{D}^* \nabla_x(\Theta^{-1})u 
	=\hat{D}^* \nabla_x (\Theta^{-1} u) =
	\hat{D}^* \nabla_x \hun 
	\,.
\end{align*}
Inserting this in \eqref{eq:PreHomogenised:SubCoord}, gives
\begin{subequations}\label{eq:Homogenised:SubCoord}
	\begin{align}
		\partial_t u -\div(\hat{D}_\hom \nabla u - (\hat{W}_\hom + \hat{V}_\hom) u ) &= \hat{F} + \hat{G} && \textrm{in }(0,T) \times \Omega \, ,
		\\
		(\hat{D}_\hom \nabla u - (\hat{W}_\hom + \hat{V}_\hom)u)\cdot n & = 0 && \textrm{on }(0,T) \times \partial \Omega \, ,
		\\
		u(0) &= u^{\init} && \textrm{in } \Omega \, ,
	\end{align}
\end{subequations}
where the substitute diffusion coefficient $\hat{D}_\hom$, the counter flow velocity $\hat{W}_\hom$ and the advection velocity $\hat{V}_\hom$ are given by 
\begin{subequations}
	\begin{align}\label{eq:HomogenisedCoefficientSubCoord}
		\begin{aligned}
			(\hat{D}_\hom&)_{ij}\tx \coloneqq \intS \Theta^{-1}\txs \hat{D}^*\txs \dy
			\\
			=&
			\intS \Theta^{-1}\txs \intYp (\An D \PnmT) \txsy (e_j + \nabla_y \hat{\zeta}_j\txsy) \cdot e_i \dys 
		\end{aligned}
		\\
		\begin{aligned}
			\hat{W}_\hom&\tx \coloneqq -\intS\hat{D}^*\txs \nabla_x (\Theta^{-1})\txs \ds
			\\
			=&
			-\intS \sum\limits_{j=1}^d \intYp (\An D \PnmT)\txsy (e_j + \nabla_y \hat{\zeta}_j\txsy) \dy \, \partial_{x_j} (\Theta^{-1})\txs \ds
		\end{aligned}
		\\
		\begin{aligned}
			\hat{V}_\hom\tx \coloneqq& \intS \hat{V}^*\txs \Theta^{-1}\txs \ds
			\\
			=&
			-\intS \Theta^{-1}\txs \intYp (\Jn D \PnmT)\txsy \nabla_y \hat{\zeta}_0\txsy \, . \hspace{3cm}
		\end{aligned}
	\end{align}
\end{subequations}
and $\hat{\zeta}_i$ for $i \in \{1, \dots, n\}$ is given by \eqref{eq:CellProblem:I:SubCoord} and $\hat{\zeta}_0$ by \eqref{eq:CellProblem:0:SubCoord}.

The weak form of \eqref{eq:Homogenised:SubCoord} is given by: Find $u \in L^2(0,T;H^1(\Omega)) \cap H^1(0,T;H^1(\Omega)')$ such that
\begin{subequations}\label{eq:WeakForm:Homogenised:SubCoord}
	\begin{align}\label{eq:WeakForm:Homogenised:SubCoord:1}
		\intTO \partial_t (u) \varphi + \hat{D}_\hom \nabla u \cdot \nabla \varphi - ((\hat{W}_\hom + \hat{V}_\hom) u ) \cdot \nabla \varphi 
		&=
		\intTO (F + G)\,\varphi \, ,
		\\\label{eq:WeakForm:Homogenised:SubCoord:2}
		u(0) &= u^\init \qquad \qquad \textrm{ in } L^2(\Omega) \hspace{3cm}
	\end{align}
\end{subequations}
for all $\varphi \in L^2(0,T;H^1(\Omega)$.

\begin{thm}\label{thm:HomEqTrafoCoord}
	Let $(\hun, \huo)$ be a solution of \eqref{eq:ts-limit-eq:transformed} and $u = \Theta \hun$. Then, $\un$ is the unique solution of \eqref{eq:WeakForm:PreHomogenised:SubCoord} and $u$ is the unique solution of \eqref{eq:WeakForm:Homogenised:SubCoord}.
\end{thm}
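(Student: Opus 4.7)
The plan is to decompose the two-scale problem into its microscopic and macroscopic parts by carefully choosing test functions in \eqref{eq:ts-limit-eq:transformed}. First I would test \eqref{eq:ts-limit-eq:transformed:1} with $\varphi_0 = 0$ and an arbitrary $\varphi_1 \in L^2((0,T) \times \Omega \times S; H^1_\#(Y))$. The time-derivative, initial-condition and source terms drop out, leaving for a.e.~$\txs$ a coercive variational problem on $H^1_\#(\Yp)/\R$ for $\huo\txs$ whose right-hand side depends linearly on $\nabla_x \hun\txs$ and $\hun\txs$. Standard linear corrector theory then produces the ansatz
\begin{equation*}
\huo\txsy = \sum_{j=1}^{d} \partial_{x_j}\hun\txs\, \hat{\zeta}_j\txsy + \hun\txs\, \hat{\zeta}_0\txsy,
\end{equation*}
where $\hat{\zeta}_j$ solves \eqref{eq:WeakForm:CellProblem:I:SubCoord} and $\hat{\zeta}_0$ solves \eqref{eq:WeakForm:CellProblem:0:SubCoord}, and this decomposition is unique up to an additive function depending only on $\txs$.

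Next I would test \eqref{eq:ts-limit-eq:transformed:1} with $\varphi_1 = 0$ and an arbitrary $\varphi_0 \in L^2(0,T;H^1(\Omega))$, insert the corrector ansatz and carry out the $y$-integration over $\Yp$. Matching the diffusive flux column by column against the definitions in \eqref{eq:PreHomogenisedCoeffcients:SubCoord} identifies the effective tensor $\hat{D}^*$ directly, while the drift contribution reduces to $\intYp \bigl(\An D\PnmT \nabla_y \hat{\zeta}_0 + \An \partial_s \psin\bigr) \dy$, which by the Piola identity $\div_y \An = 0$ combined with the weak form \eqref{eq:WeakForm:CellProblem:0:SubCoord} collapses to $-\hat{V}^*$. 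The source terms produce $\hat{F}$ and $\hat{G}$ by definition. Together with the constraint $\partial_s(\Theta \hun) = 0$ already encoded in the regularity $\Theta\hun \in L^2(0,T;H^1(\Omega))$ and with the initial datum \eqref{eq:ts-limit-eq:transformed:2}, this is exactly \eqref{eq:WeakForm:PreHomogenised:SubCoord}.

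For the claim on $u$, I would follow Section~\ref{subsec:SeparationTimeVariables}: substituting $\nabla_x \hun = \Theta^{-1} \nabla_x u + u \nabla_x(\Theta^{-1})$ in the flux of \eqref{eq:WeakForm:PreHomogenised:SubCoord}, integrating in $s \in S$ and grouping coefficients gives \eqref{eq:WeakForm:Homogenised:SubCoord} with $\hat{D}_\hom$, $\hat{W}_\hom$, $\hat{V}_\hom$ as in \eqref{eq:HomogenisedCoefficientSubCoord}. Uniqueness for \eqref{eq:WeakForm:Homogenised:SubCoord} will follow from a standard Grönwall-type energy estimate once coercivity of $\hat{D}_\hom$ is deduced from coercivity of $D$ via the cell problems; uniqueness for \eqref{eq:WeakForm:PreHomogenised:SubCoord} then transfers through the bijection $\hun = \Theta^{-1} u$, which is well defined since $\Theta \geq c_J|\Yp|>0$ and $\Theta \hun$ is $s$-independent. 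The main technical obstacle I anticipate is the clean identification of $\hat{V}^*$: the Jacobian-only form \eqref{eq:PreHomogenisedCoeffcients:SubCoord:V} is recovered only after carefully exploiting both the cell problem for $\hat{\zeta}_0$ and the Piola identity to reshape the microscopic drift $\An \partial_s \psin$ into the stated expression.
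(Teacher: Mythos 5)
Your high-level structure matches the paper: test with $\varphi_0 = 0$ to obtain the corrector decomposition $\huo = \sum_j \hat{\zeta}_j \partial_{x_j}\hun + \hat{\zeta}_0\hun$, then test with $\varphi_1 = 0$ and insert the correctors, then separate time variables by the substitution $\un = \Theta^{-1}u$, and close via coercivity and Gr\"onwall. However, the identification of the drift term --- which you yourself flag as the main obstacle --- is not correctly handled, and this is precisely where the paper's proof does genuine work.

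After inserting the corrector ansatz the raw drift is $\hat{U}^* := -\intYp \An \bigl(D\PnmT\nabla_y\hat{\zeta}_0 + \partial_s\psin\bigr)\dy$, while $\hat{V}^*$ in \eqref{eq:PreHomogenisedCoeffcients:SubCoord:V} is $-\intYp \Jn D\PnmT\nabla_y\hat{\zeta}_0\dy$. You claim that $\hat{U}^* = -\hat{V}^*$ follows from the Piola identity $\div_y\An=0$ together with the cell problem \eqref{eq:WeakForm:CellProblem:0:SubCoord}, but that pointwise (in $s$) identity is false in general. The paper instead argues in two separate steps: first it writes $\An\xi = \Jn\xi + \partial_y(y-\psin)\An\xi$ and, since $y\mapsto y-\psin$ is an admissible $Y$-periodic test function (Assumption~\ref{ass:psie}\ref{item:L3}), tests the cell problem for $\hat{\zeta}_0$ with each component $(y-\psin)_i$ to justify replacing $\An$ by $\Jn$; the Piola identity is not the tool here. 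This yields $\hat{U}^* = -\intYp\Jn\bigl(D\PnmT\nabla_y\hat{\zeta}_0 + \partial_s\psin\bigr)\dy$, which still contains the nonzero quantity $\Jn\intYp\partial_s\psin\dy$. That residual term does \emph{not} vanish at the level of the $\Yp$-integral; it disappears only after forming $\intS\hat{U}^*\hun\ds$, because $\Jn\hun$ is independent of $s$ and $\psin$ is $S$-periodic, so $\intSYp\partial_s\psin\dys\,(\Jn\hun)=\intYp\bigl(\psin(\cdot,1,\cdot)-\psin(\cdot,0,\cdot)\bigr)\dy\,(\Jn\hun)=0$ (see \eqref{eq:proof:Simplifaction:Advection:001}--\eqref{eq:proof:Simplifaction:Advection:002}). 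Your sketch suppresses this $s$-integration argument entirely, so as written the identification of the advection velocity does not go through.

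A second, smaller remark: the uniqueness claim for \eqref{eq:WeakForm:PreHomogenised:SubCoord} should be phrased carefully. What is unique is the pair $(\Theta\hun,\,\hun)$ subject to the constraint $\partial_s(\Theta\hun)=0$; passing between this and \eqref{eq:WeakForm:Homogenised:SubCoord} via the bijection $\hun = \Theta^{-1}u$ is correct, but this relies on $\Theta\geq c_J|\Yp|>0$ and on $\Theta$ being independent of $y$ (a consequence of Assumption~\ref{ass:psie}\ref{item:B3}), which you implicitly use and should make explicit.
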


\begin{proof}
	First, we choose $\varphi_0 = 0$ in \eqref{eq:ts-limit-eq:transformed}, which yields
	\begin{align}
		\begin{aligned}\label{eq:HomogenisationProof:Trafo:1}
			&\intTOSYp (\An D \PnmT)\txsy (\nabla_x \hun\txs + \nabla_y \huo\txsy)\big) \\
			& \qquad \qquad \qquad \cdot 
			\nabla_y \varphi_1\txsy \dysxt
			\\
			&+
			\intTOSYp \An \txsy \partial_s \psin \txsy \hun\txs \cdot
			\nabla_y \varphi_1(t,x,s,y) \dysxt = 0 \,
		\end{aligned}
	\end{align}
	for every $\varphi_1 \in L^2((0,T) \times \Omega \times S ;H^1_\#(\Yp))$.
	After separating the $x$- and $y$-variables we obtain
	\begin{align}\label{eq:hu1=hzeta_i}
		u_1\txsy = \sum\limits_{i=1}^d \hat{\zeta}_i\txsy \partial_{x_i} u_0\txs + \hat{\zeta}_0\txsy u_0\txs \, ,
	\end{align}
	where $\hat{\zeta}_i \in L^\infty((0,T) \times \Omega \times (0,1);H^1_\#(\Yp)/\R)$, for $i \in \{1,\dots, n\}$, are the unique solution of the cell problem \eqref{eq:CellProblem:I:SubCoord} and $\hat{\zeta}_0 \in L^\infty((0,T) \times \Omega \times S;H^1_\#(\Yp))$ is the unique solution of
	\eqref{eq:CellProblem:0:SubCoord}.
	
	Now, we insert \eqref{eq:hu1=hzeta_i} into \eqref{eq:ts-limit-eq:transformed} and set $\varphi_1 = 0$, which gives \eqref{eq:WeakForm:PreHomogenised:SubCoord}
	\begin{align}
		\begin{aligned}\label{eq:homogenised/two-scale:Ju:u}
			&\intTO |\Yp| (\Jn \hun)\tx \partial_t \varphi_0\tx \dxt
			\\
			&+\intTOS \Big(\hat{D}^*\txs \nabla_x u_0\txs - \hat{U}^*\txs \hun \txs\Big)\ds \cdot \nabla \varphi_0\tx \dxt 
			\\
			&=
			\intTO (\hat{F}\tx +\hat{G}\tx) \varphi_0 \tx \dxt
		\end{aligned}
	\end{align}
	where $\Theta$, $\hat{D}^*, \hat{F}, \hat{G}$ are given by \eqref{eq:PreHomogenisedCoeffcients:SubCoord} and
	\begin{align}\label{eq:HomogenisationProof:Trafo:2}
		\hat{U}^*\txs \coloneqq -\intYp \An\txsy \big(D \PnmT\txsy \nabla_y \hat{\zeta}_0\txsy + \partial_s \psin \txsy \dy \,,
	\end{align}
	In order to simplify $\intS \hat{U}^* \hun$ in \eqref{eq:homogenised/two-scale:Ju:u}, we rewrite for $\xi \in \R^d$,
	\begin{align*}
		\begin{aligned}
			\An\xi &= \Jn \Pnm \xi
			=
			\Jn \xi + ( \1 -\Pn) \Jn\Pnm \xi
			= 
			\Jn \xi + \partial_y (y -\psin) \An \xi
			\\
			&=
			\Jn \xi + \left(\begin{array}{c}
				\nabla_y ((y -\psin)_1) \cdot \An \xi
				\\
				\vdots
				\\
				\nabla_y ((y -\psin)_d) \cdot \An \xi
			\end{array}\right) \,.
		\end{aligned}
	\end{align*}
	We note that $(y \mapsto (y -\psin\txsy)) \in H^1_\#(\Yp)$ for a.e.~$\txs \in (0,T) \times \Omega \times S$. 
	Thus, the cell problem 	\eqref{eq:CellProblem:0:SubCoord} shows for every $i \in \{1, \dots, d\}$ that
	\begin{align*}
		&\intYp (\nabla_y (y- \psin\txsy)_i) 
		\\
		&\qquad \cdot \An\txsy \big(D \PnmT\txsy \nabla_y \hat{\zeta}_0\txsy + \partial_s \psin \txsy \big) \dy = 0 \, .
	\end{align*}
	Combining, the previous two equations allows us to rewrite
	\begin{align}\label{eq:HomogenisationProof:Trafo:3}
		\hat{U}^*\txs = -\intYp \Jn\txs \big(D \PnmT\txsy \nabla_y \hat{\zeta}_0\txsy + \partial_s \psin \txsy \dy \, .
	\end{align}
	and, consequently, we get 
	\begin{align}\label{eq:proof:Simplifaction:Advection:001}
		\begin{aligned}
			\intS\hat{U}^*\txs \hun\txs \ds 
			= &-\intSYp \Jn\txs \big(D \PnmT\txsy \nabla_y \hat{\zeta}_0\txsy \hun \dys
			\\
			&\qquad - \intSYp \partial_s \psin \txsy \dy\, \Jn\txs \hun\txs \ds \, .
		\end{aligned}
	\end{align}
	Employing the fact that $(\Jn \hun) = |\Yp|^{-1}(\Theta \hun)$ is independent of $s$ and $\psin$ is $S$-periodic, we get
	\begin{align}\label{eq:proof:Simplifaction:Advection:002}
		\begin{aligned}
			\intSYp & \partial_s \psin \txsy \dy \, \Jn\txs \hun\txs \ds 
			= 
			\intSYp \partial_s \psin \txsy \dys \, (\Jn\hun)\tx \\
			& = \intYp \psin(t,x,1,y)-\psin(t,x,0,y) \dy \, (\Jn\hun)\tx = 0 \, .
		\end{aligned}
	\end{align}
	Thus, the second summand on the right-hand side of \eqref{eq:proof:Simplifaction:Advection:001} vanishes. This simplifies \eqref{eq:homogenised/two-scale:Ju:u} to \eqref{eq:WeakForm:PreHomogenised:SubCoord} for 
	$\hat{V}^*$ given by \eqref{eq:PreHomogenisedCoeffcients:SubCoord:V}.

	The reformulation of \eqref{eq:WeakForm:PreHomogenised:SubCoord} to \eqref{eq:WeakForm:Homogenised:SubCoord} for $\hat{D}_\hom$, $\hat{W}_\hom$ and $\hat{V}_\hom$ given by \eqref{eq:HomogenisedCoefficientSubCoord} is shown above.
\end{proof}

Following the argumentation of the simplification of \eqref{eq:HomogenisationProof:Trafo:2} to \eqref{eq:HomogenisationProof:Trafo:3} in the proof of Theorem \ref{thm:HomEqTrafoCoord} but using \eqref{eq:HomogenisationProof:Trafo:1} instead of the cell problem 
\eqref{eq:CellProblem:I:SubCoord}, we get
\begin{align}\label{eq:Simplifiaction:Two-scale-Limit:Trafo}
	\intSYp \An \big( D \PnmT\nabla_y \huo + \partial_s \psi_0\hun \big) 
	\cdot \nabla_x \varphi_0 
	=
	\intSYp \big(\Jn D \PnmT\nabla_y \huo + \Jn \partial_s \psin \hun \big)
	\cdot \nabla_x \varphi_0 \,.
\end{align}
The last summand on the right-hand side of \eqref{eq:Simplifiaction:Two-scale-Limit:Trafo} vanishes by the argumentation as in \eqref{eq:proof:Simplifaction:Advection:001}--\eqref{eq:proof:Simplifaction:Advection:002} and we can simplify \eqref{eq:ts-limit-eq:transformed:1} to:
\begin{align}
	\begin{aligned}\label{eq:ts-limit-eq:transformed-Simplified:1} 
		&\intT \langle \partial_t (\Theta \hun), \varphi_0 \rangle_{H^1(\Omega)', H^1(\Omega)}
		+
		\intTOSYp \Jn D \PnmT \big( \nabla_x \hun + \nabla_y \huo \big) 
		\cdot \nabla_x \varphi_0 
		\\
		&+
		\intTOSYp \big( \An D \PnmT(\nabla_x \hun + \nabla_y \huo ) 
		+
		\An \partial_s \psin \hun\big)
		\cdot \nabla_y \varphi_1 
		\\
		=&
		\intTOSYp \Jn \hfn \varphi_0
		+
		\intTOSG \Jn \|\PnmT \hat{\nu}\| \hgn \, \varphi_0 \,.
	\end{aligned}
\end{align}

\section{Back-transformation of the homogenised equation}\label{sec:Back-Trafo}
We transform the above derive two-scale limit problem and both homogenised problems as well as the cell problems back to the actual two-scale domain. As result, we obtain transformation-independent equations and cell problems defined in the actual pulsating reference cells.

The following lemma transfers the two-scale convergence of $\hue$ in a two-scale convergence for $\ue$.
\begin{lemma}\label{lem:Trafo:Two-scale:Convergence}
	Let $p \in (1,\infty)$ and assume that $\psie$ and $\psin$ satisfy Assumption \ref{ass:psie}. Let $\ue$ be a sequence in $L^p(\OeT)$ and $\hue\tx = \ue(t,\psie\tx)$ be the corresponding transformed sequence in $L^p((0,T) \times \Oe)$.
	Let $\un \in L^p(\Omega_0^T)$ and $\hun \in L^2((0,T) \times \Omega \times S \times \Yp)$. Then
	\begin{align*}
		&\widetilde{\ue} \tsw{p} \widetilde{\un}&& \textrm{ if and only if } &&\widetilde{\hue} \tsw{p} \widetilde{\hun} \, ,
		\\
		&\widetilde{\ue} \tss{p} \widetilde{\un}&& \textrm{ if and only if } &&\widetilde{\hue} \tss{p} \widetilde{\hun} \, ,
	\end{align*}
	where $\hun(t,x,s,y) = \un(t,x,s,\psin\txsy)$.
	Moreover, $\nabla \ue$ is a sequence in $L^p(\OeT)$ if and only if $\nabla_x \hue$ is sequence in $L^p((0,T) \times \Oe)$ and it holds
	\begin{align*}
		&\widetilde{\nabla_x \ue} \tsw{p} \one{\Yptxs}(y) \nabla_x \un\txs + \widetilde{\nabla_y \uo}\txsy &&\textrm{ if and only if }
		\\
		&\widetilde{\hue} \tsw{p} \one{\Yp}(y)\nabla_x \hun\txs + \widetilde{\nabla_y \huo}\txsy
	\end{align*}
	for $\hun\txs = \un\txs$ and $\huo\txsy = \uo(t,x,s,\psin\txsy) + \nabla_x \un\txs \cdot (\psin\txsy -y)$\,.
\end{lemma}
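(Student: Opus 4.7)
The plan is to reduce everything to Lemma~\ref{lem:TwoScaleEquiv} (which already handles the equivalence for scalar sequences) by exploiting the chain rule and the strong two-scale convergence of the transformation coefficients. First, the scalar assertions about $\widetilde{u_\e}\leftrightarrow \widetilde{\hat u_\e}$ are \emph{immediate} from Lemma~\ref{lem:TwoScaleEquiv} once we observe that the relation $\hat u_0(t,x,s,y)=u_0(t,x,s,\psi_0\txsy)$ is exactly the ``untransformation'' prescription used there. The boundedness equivalence $\nabla u_\e\in L^p(\Omega_\e^T)\Leftrightarrow \nabla_x\hat u_\e\in L^p((0,T)\times\Omega_\e)$ follows from the chain rule
\[
\nabla_x\hat u_\e(t,x)=\Psi_\e^\top(t,x)\,(\nabla u_\e)(t,\psi_\e(t,x)),
\]
combined with the uniform essential bounds for $\Psi_\e^{\pm 1}$ and $J_\e^{\pm 1}$ from Lemma~\ref{lem:unifromBoundsPe} (Jacobi change of variables in the norm).

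For the gradient two-scale statement, I introduce the auxiliary transformed field $\widehat{\nabla u_\e}(t,x):=(\nabla u_\e)(t,\psi_\e(t,x))$, so that the chain rule gives
\[
\widehat{\nabla u_\e}=\Psi_\e^{-\top}\nabla_x\hat u_\e.
\]
Assume $\widetilde{\nabla_x\hat u_\e}\tsw{p}\mathbf 1_{Y^*}(\nabla_x\hat u_0+\nabla_y\hat u_1)$. Multiplying by the strongly two-scale convergent sequence $\Psi_\e^{-\top}\tss{p'}\Psi_0^{-\top}$ (Lemma~\ref{lem:Two-scale-conv:Coef}) and using the standard strong--weak two-scale product, we obtain
\[
\widetilde{\widehat{\nabla u_\e}}\tsw{p}\mathbf 1_{Y^*}(y)\,\Psi_0^{-\top}\txsy\bigl(\nabla_x\hat u_0\txs+\nabla_y\hat u_1\txsy\bigr).
\]
Applying Lemma~\ref{lem:TwoScaleEquiv} to the sequence $\nabla u_\e$ then converts this two-scale limit into a two-scale limit for $\widetilde{\nabla u_\e}$ on the actual cells $Y^*\txs$; the limit is the function $F\txsz$ defined for $z=\psi_0\txsy$ by $F(t,x,s,\psi_0\txsy)=\Psi_0^{-\top}\txsy(\nabla_x\hat u_0\txs+\nabla_y\hat u_1\txsy)$.

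The decisive step, and the main technical point, is to identify $F$ with $\mathbf 1_{Y^*\txs}(z)(\nabla_x u_0\txs+\nabla_z u_1\txsz)$ for the functions $(u_0,u_1)$ prescribed by the lemma. Setting $u_0=\hat u_0$ and differentiating the prescribed identity $\hat u_1\txsy=u_1(t,x,s,\psi_0\txsy)+\nabla_x u_0\txs\cdot(\psi_0\txsy-y)$ with respect to $y$ gives
\begin{align*}
\nabla_y\hat u_1\txsy
&=\Psi_0^{\top}\txsy\,\nabla_z u_1(t,x,s,\psi_0\txsy)+\bigl(\Psi_0^{\top}\txsy-\mathbbm 1\bigr)\nabla_x u_0\txs,
\end{align*}
so that
\[
\Psi_0^{-\top}\bigl(\nabla_x\hat u_0+\nabla_y\hat u_1\bigr)
=\nabla_x u_0+\nabla_z u_1(t,x,s,\psi_0\txsy),
\]
which is precisely $F(t,x,s,\psi_0\txsy)$. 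The reverse implication is symmetric: one inverts the chain rule via $\nabla_x\hat u_\e=\Psi_\e^{\top}\widehat{\nabla u_\e}$ and rereads the same algebraic identity with $\Psi_0^{\top}$. The only delicate piece, beyond Lemma~\ref{lem:TwoScaleEquiv} and the coefficient convergence, is checking that the correction term $\nabla_x u_0\cdot(\psi_0-y)$ lives in $L^p((0,T)\times\Omega\times S;H^1_\#(Y))$ (so that $\hat u_1$ inherits $Y$-periodicity); this uses the $Y$-periodicity of the displacement $\psi_0\txsy-y$ from Assumption~\ref{ass:psie}\ref{item:L3} and the regularity \ref{item:L1}, together with $\nabla_x u_0\in L^p$.
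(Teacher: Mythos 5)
Your proof is correct, but it takes a genuinely different route from the paper's. The paper's proof is essentially a citation: it points to \cite[Theorems 3.8, 3.14]{AA23} for the scalar weak and strong equivalences (exactly the content of Lemma~\ref{lem:TwoScaleEquiv}) and then asserts that the gradient statement ``can be deduced similarly to \cite[Theorem 3.10]{AA23}'' without further computation. You instead give a self-contained derivation of the gradient part, using only the scalar equivalence from Lemma~\ref{lem:TwoScaleEquiv} as a black box: you introduce the intermediate field $\widehat{\nabla u_\e}=\Psi_\e^{-\top}\nabla_x\hat u_\e$ via the chain rule, pass to the weak two-scale limit by multiplying with the strongly two-scale convergent $\Psi_\e^{-\top}\tss{q}\Psi_0^{-\top}$ (Lemma~\ref{lem:Two-scale-conv:Coef} plus the standard strong--weak two-scale product rule, which is valid here since $\Psi_\e^{-\top}$ is uniformly $L^\infty$-bounded), transfer this limit back to the actual cells via Lemma~\ref{lem:TwoScaleEquiv}, and then identify the resulting limit with $\nabla_x u_0+\nabla_z u_1$ through the algebraic identity
\begin{align*}
\Psi_0^{-\top}\bigl(\nabla_x\hat u_0+\nabla_y\hat u_1\bigr)
=\nabla_x u_0+(\nabla_z u_1)\circ\psi_0 ,
\end{align*}
which follows by differentiating the prescribed relation for $\hat u_1$; I verified this identity against the paper's index conventions for $\nabla$ and $\partial_x$, and it checks out. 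Your final observation about the $Y$-periodicity of the correction term $\nabla_x u_0\cdot(\psi_0-y)$ via Assumption~\ref{ass:psie}\ref{item:L3} is the right thing to flag, since without it the decomposition $\mathbf 1_{Y^*}(\nabla_x\hat u_0+\nabla_y\hat u_1)$ with $\hat u_1\in H^1_\#(Y^*)$ would not be well posed. The trade-off between the two approaches: the paper's is shorter and defers the real work to \cite{AA23}, while yours is effectively a proof of the result announced as \cite[Theorem 3.10]{AA23} specialised to the present setting, and so is more transparent to a reader who does not want to chase the external reference. You only spell out one direction and appeal to symmetry for the other, which is acceptable but would deserve a sentence noting that $\Psi_\e^{\top}\tss{q}\Psi_0^{\top}$ also holds (same Lemma~\ref{lem:Two-scale-conv:Coef}) so that the inverse chain rule $\nabla_x\hat u_\e=\Psi_\e^{\top}\widehat{\nabla u_\e}$ can be treated the same way.
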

\begin{proof}
	The result is shown in \cite{AA23} under weaker Assumptions. 
	There the transformations have no separate time direction as in our case. The equivalence of the weak convergence of the functions in Lemma \ref{lem:Trafo:Two-scale:Convergence} is given by \cite[Theorem 3.8]{AA23} applied on the diffeomorphisms $\tx \mapsto (t, \psie\tx)$ and the limit transformations $(s,y) \mapsto \psin(t,x,s,y)$ with parameter $\tx \in (0,T) \times \Omega$. The equivalence for the strong two-scale convergence is shown in \cite[Theorem 3.14]{AA23}.
	The second part of Lemma \ref{lem:Trafo:Two-scale:Convergence} can be deduced similarly to \cite[Theorem 3.10]{AA23}. 
\end{proof}

\subsection{Back-transformation of the two-scale limit problem}
In this section, we transform the two-scale limit problem \eqref{eq:ts-limit-eq} back to the domain $\Omega_0^T$.

To derive some transformation independent result, we use the reformulation of \eqref{eq:ts-limit-eq:transformed} by \eqref{eq:ts-limit-eq:transformed-Simplified:1}.
We note that 
\begin{align*}
	\PnmT\txsy &= (\nabla_y \psin)^{-1}\txsy = (\nabla_y (\psinm))(t,x,s, \psin\txsy)
	\\
	&=
	\1 + (\nabla_y (\psinm -y)) (t,x,s, \psin\txsy) \,.
\end{align*}
We rewrite the macroscopic diffusion term of \eqref{eq:ts-limit-eq:transformed-Simplified:1} and transform in back by means of $\psin$, which gives
\begin{align}\label{eq:MacroAdvectionBackTrafo}
	\begin{aligned}
		&	\intYp \Jn D \PnmT \big(\nabla_x \hun + \nabla_y \huo \big) 
		\cdot \nabla_x \varphi_0
		\\
		&	=
		\intYp \Jn\txs D \Big( \nabla_x \hun\txs + (\nabla_y ((\psinm -y)\cdot \nabla_x \hun) (t,x,s, \psin\txsy) 
		\\
		&\qquad \qquad \qquad \qquad+ \PnmT\txsy \nabla_y \huo \big) \Big) \cdot \nabla_x \varphi_0\txs
		\dy
		\\
		&	=
		\intYptxs D \nabla_x \un\txs + D \nabla_y \uo\txsy 
		\cdot \nabla_x \varphi_0\txs \dy
	\end{aligned}
\end{align}
for $\un = \hun$ and 
\begin{align}\label{eq:Identifcation:hun1=un1}
	\uo\txsy = \huo(t,x,s,\psinm\txsy)) + (\psinm \txsy-y)\cdot \nabla_x \hun\txs \,.
\end{align}

Before we transform the microscopic advection term of \eqref{eq:ts-limit-eq:transformed-Simplified:1}, we integrate the $\Yp$ integral by parts and identify $\partial_s \psin$ with $\hvGn$ on $\Gamma$, which gives
\begin{align}
	\begin{aligned}\label{eq:MicroAdvectionTerm:PartIntegration}
		\intYp \An \partial_s \psin \cdot \nabla_y \varphi_1 
		&= 
		-\intYp \div_y (\An \partial_s \psin) \varphi_1 + \intG \An \partial_s \psin \cdot \hat{\nu} \varphi_1
		\\
		&=
		-\intYp \partial_s \Jn \varphi_1 + \intG \An \hvGn \cdot \hat{\nu} \varphi_1
	\end{aligned}
\end{align}
We use the identity
\begin{align*}
	\|\PnT\txsy \hat{\nu}\txsy\|^{-1}\PnmT \txsy\hat{\nu}\txsy = \nu(t,x,s,\psin\txsy) \, ,
\end{align*}
in order to transform the boundary integral on the right-hand side of \eqref{eq:MicroAdvectionTerm:PartIntegration}, which gives
\begin{align}\label{eq:MicroAdvectionBackTrafo}
	\begin{aligned}
		&\intYp \An \partial_s \psin \cdot \nabla_y \varphi_1 
		=
		-(\partial_s \Jn ) \Jnm \intYp \Jn \varphi_1 + \intGtxs \Jn \hvGn \cdot \PnmT \hat{\nu} \varphi_1
		\\
		&=
		-(\partial_s \Jn) |\Yp| |\Yp|^{-1} \Jnm \intYptxs \varphi_1(t,x,s,\psinm\txsy) \dy+ \intGtxs \Jn \hvGn \cdot \PnmT \hat{\nu} \varphi_1
		\\
		&=
		-\partial_s \Theta \Theta^{-1} \intYptxs \varphi_1(t,x,s,\psinm\txsy) \dy
		\\
		& \qquad + \intG \Jn\txs (\|\PnmT\hat{\nu}\| \hvGn)
		\txsy \cdot \nu(t,x,s,\psinm\txsy) \varphi_1\txsy \dd \sigma_y
		\\
		&=
		-\partial_s \Theta \Theta^{-1} \intYptxs \varphi_1(t,x,s,\psinm\txsy) \dy
		\\
		& \qquad + \intGtxs \vGn\txsy
		\cdot \nu \varphi_1(t,x,s,\psinm\txsy) \dd \sigma_y \, .
	\end{aligned}
\end{align}

Having \eqref{eq:MacroAdvectionBackTrafo} and \eqref{eq:MicroAdvectionBackTrafo}, the remaining transformation of \eqref{eq:ts-limit-eq:transformed-Simplified:1} follows directly and we obtain th equivalent weak form:

Find $(\un, \uo) \in L^2((0,T) \times S;H^1(\Omega)) \times L^2((0,T) \times \Omega \times S; H^1_\#(\Yptxs))$, with 
$(\Theta \un) \in L^2(0,T;H^1(\Omega)) \cap H^1(0,T;H^1(\Omega)')$ such that
\begin{subequations}\label{eq:ts-limit-eq}
	\begin{align}
		\begin{aligned}\label{eq:ts-limit-eq:1}
			&\intT \langle \partial_t (\Theta \un), \varphi_0 \rangle_{H^1(\Omega)', H^1(\Omega)}
			+
			\intTOSYptxs D \big( \nabla_x \un + \nabla_y \uo \big) 
			\cdot \nabla_x \varphi_0 
			\\
			&+
			\intTOS \left(\intYptxs D (\nabla_x \un + \nabla_y \uo )
			\cdot \nabla_y \varphi_1 - \partial_s \Theta \Theta \un \varphi_1 + \un \intGtxs \vGn \cdot \nu \varphi_1 \right)
			\\
			=&
			\intTOSYptxs \fn \varphi_0
			+
			\intTOS\intGtxs \gn \varphi_0
		\end{aligned}
		\\
		\begin{aligned}
			\label{eq:ts-limit-eq:2}
			(\Theta \un)(0) = \Theta(0,\cdot,0)\unin \, \qquad \textrm{ in } L^2(\Omega) \hspace{4cm}
		\end{aligned}
	\end{align}
\end{subequations}
for all $(\varphi_0, \varphi_1) \in L^2(0,T;H^1(\Omega)) \times L^2((0,T) \times \Omega \times S;H^1_\#(\Yptxs))$.

The following lemma is a direct consequence of the above integral transformations.
\begin{lemma}
	The weak forms \eqref{eq:ts-limit-eq:transformed} and \eqref{eq:ts-limit-eq} are equivalent in the sense that $(\hun, \huo)$ solves \eqref{eq:ts-limit-eq:transformed} if and only if $(\un, \uo)$ solves \eqref{eq:ts-limit-eq:transformed}. The solutions can be identified by $\hun = \un$ and \eqref{eq:Identifcation:hun1=un1}.
\end{lemma}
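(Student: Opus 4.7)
The plan is to prove the equivalence by a direct change-of-variables argument, combining the integral transformations \eqref{eq:MacroAdvectionBackTrafo} and \eqref{eq:MicroAdvectionBackTrafo} already derived above with the analogous transformations for the remaining integrals. First I would fix a solution $(\hun, \huo)$ of \eqref{eq:ts-limit-eq:transformed}, set $\un\txs := \hun\txs$ (no $y$-transformation is needed since $\hun$ is independent of $y$), and define $\uo$ through \eqref{eq:Identifcation:hun1=un1}. Since $\psin(t,x,s,\cdot)$ is a $C^2$-diffeomorphism from $\overline{Y}$ onto $\overline{Y}$ that maps $\Yp$ onto $\Yptxs$ and preserves periodicity via Assumption~\ref{ass:psie}\ref{item:L2}--\ref{item:L4}, the map $\uo\mapsto\huo$ given by $\huo\txsy = \uo(t,x,s,\psin\txsy) + \nabla_x \hun\txs \cdot(\psin\txsy - y)$ is a bijection between $L^2((0,T)\times\Omega\times S; H^1_\#(\Yptxs))$ and $L^2((0,T)\times\Omega\times S; H^1_\#(\Yp))$. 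Similarly test functions $\varphi_1$ transform by pull-back $\varphi_1(t,x,s,y) = \tilde\varphi_1(t,x,s,\psin\txsy)$, giving a bijection between the corresponding test spaces; test functions $\varphi_0$ require no transformation.

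Next I would apply the transformations term by term. The time-derivative pairing in \eqref{eq:ts-limit-eq:transformed:1} is unchanged since $\hun = \un$ and $\Theta$ is the same in both formulations. The macroscopic diffusion term produces $\intTOSYptxs D(\nabla_x \un + \nabla_y \uo)\cdot\nabla_x\varphi_0$ by \eqref{eq:MacroAdvectionBackTrafo}. The microscopic diffusion term tested with $\varphi_1$ is handled by exactly the same computation as in \eqref{eq:MacroAdvectionBackTrafo}, using $\nabla_y \varphi_1\txsy = \PnT\txsy \nabla_y \tilde\varphi_1(t,x,s,\psin\txsy)$, which yields $\intYptxs D(\nabla_x\un+\nabla_y\uo)\cdot\nabla_y\tilde\varphi_1$. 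The microscopic advection term is exactly \eqref{eq:MicroAdvectionBackTrafo}, producing the $-\partial_s\Theta\,\Theta^{-1}\un\,\tilde\varphi_1$ contribution (after multiplication by $\hun$ and using $(\Theta\hun)$-independence of $s$) together with the boundary integral $\un\intGtxs \vGn\cdot\nu\,\tilde\varphi_1$. The bulk source $\intTOSYp \Jn \hfn\varphi_0$ transforms to $\intTOSYptxs \fn\varphi_0$ by the standard change of variables $y\mapsto\psin\txsy$ with Jacobian $\Jn$, and the surface source $\intTOSG \Jn\|\PnmT\hat\nu\|\hgn\varphi_0$ transforms to $\intTOS\intGtxs \gn\varphi_0$ using the surface area formula $\Jn\|\PnmT\hat\nu\|\dd\sigma_y = \dd\sigma_{\psin(y)}$. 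The initial condition $(\Theta\hun)(0) = \Theta(0,\cdot,0)\hunin$ becomes $(\Theta\un)(0) = \Theta(0,\cdot,0)\unin$ directly, since $\hunin = \unin$ on $\Omega$ (no $y$-dependence).

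The converse direction is obtained by reversing every change of variables via $\psinm$ rather than $\psin$; all transformations performed are bijective at the level of both solution spaces and test-function spaces, so no additional argument is needed.

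The main obstacle will be bookkeeping rather than conceptual: one must verify carefully that the correction term $(\psinm\txsy - y)\cdot\nabla_x\hun$ in \eqref{eq:Identifcation:hun1=un1} is exactly what is required so that the pull-back of $\nabla_y\huo$ by $\psinm$ produces $\nabla_y\uo$ plus a piece that, together with $\PnmT\nabla_x\hun$, assembles into $\nabla_x\un$ after applying $\PnT$; this is precisely the identity used in \eqref{eq:MacroAdvectionBackTrafo}. Once this chain-rule identity is pinned down, every remaining step is a routine application of the diffeomorphism change-of-variables formula under the regularity guaranteed by Assumption~\ref{ass:psie}.
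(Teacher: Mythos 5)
Your proposal is correct and takes essentially the same route as the paper, which states the lemma as a direct consequence of the change-of-variable computations \eqref{eq:MacroAdvectionBackTrafo} and \eqref{eq:MicroAdvectionBackTrafo} together with the standard bulk and surface transformations; you simply write out the term-by-term bookkeeping that the paper leaves implicit. The only imprecision worth noting is that the microscopic diffusion term is not handled by the \emph{literal} repetition of \eqref{eq:MacroAdvectionBackTrafo} (which carries $\Jn$) but by combining $\An = \Jn\Pnm$ with the chain rule $\nabla_y\varphi_1 = \PnT\,(\nabla_y\tilde\varphi_1)\circ\psin$ you already record, so that $\Pnm$ and $\PnT$ cancel and the $\Jn$-weighted Jacobian emerges as in the macroscopic case.
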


\subsection{Homogenised problems}
As in the untransformed setting, we separate the microscopic and macroscopic space variables first, which gives the homogenised problem \eqref{eq:PreHomogenised}. In order to separate the microscopic time variable, we reformulate \eqref{eq:Homogenised} again in terms of $u = \Theta \un = \Theta \hun$ by introducing a counter flow balancing the reformulation of the diffusive flux. The result is given by \eqref{eq:Homogenised}.

The coefficients of those homogenised problems can be identified with those of the corresponding equation before the transformation. Thus, they are the same equations and the weak form of \eqref{eq:PreHomogenised} and \eqref{eq:Homogenised} is the same as the weak form of \eqref{eq:PreHomogenised:SubCoord} and \eqref{eq:Homogenised:SubCoord} but without the $\hat{\cdot}$ on the unknowns, coefficients and data.

\subsubsection{Separation of the spatial variables}The separation of the spatial variables in \eqref{eq:ts-limit-eq} gives the homogenised equation
\begin{subequations}\label{eq:PreHomogenised}
	\begin{align}\label{eq:PreHomogenised:1}
		\partial_t (\Theta \hun) - \div( {D}^* \nabla \un - {V}^* \un) &= {F} + {G}&& \textrm{in } (0,T) \times \Omega \times S \, ,
		\\\label{eq:PreHomogenised:2}
		({D}^* \nabla \hun - {V}^* \un) \cdot n &= 0 && \textrm{in } (0,T) \times \partial \Omega \times S \, ,
		\\\label{eq:PreHomogenised:3}
		\partial_s (\Theta \un) &= 0 &&\textrm{in } (0,T) \times \Omega \times S \, ,
		\\\label{eq:PreHomogenised:4}
		\un(0) &= \un^{\init} &&\textrm{in } \Omega \,.
	\end{align}
\end{subequations}
where the porosity $\Theta$, the effective diffusion coefficient $D^*$, the advection velocity $V^*$ and the bulk source terms $F$ and $G$ are given by
\begin{subequations}\label{eq:PreHomogenisedCoeffcients}
	\begin{align}
		&\Theta\txs \coloneqq \intYptxs 1 \dy = |\Yptxs| \,,
		\\
		&D_{ij}^*\txs \coloneqq \int\limits_{\Yp} D (e_j + \nabla_y \zeta_j) \cdot e_i \,,
		\\\label{eq:PreHomogenisedCoeffcients:V}
		&V^*\txs \coloneqq \intYptxs -D \nabla_y \zeta_0 \,,
		\\
		&F\tx \coloneqq \intS \intYptxs \fn \,,
		\\
		&G\tx \coloneqq \intS \intGtxs \gn \,.
	\end{align}
\end{subequations}
The effective diffusion coefficient $D^*$ and the advection velocity $V^*$ depend on the solutions $\zeta_i$ for $i \in \{1,\dots, n\}$ and $\zeta_0$ of the following cell problems:
\begin{align}\label{eq:CellProblem:I}
	\begin{aligned}
		-\div( D (e_j + \nabla_y \zeta_j)) &= 0 &&\textrm{in } \Yptxs \,,
		\\
		D (e_j + \nabla_y \zeta_j) \cdot \nu &= 0 &&\textrm{on } \Gtxs \,,
		\\
		y &\mapsto \zeta_j &&Y-\textrm{periodic}
	\end{aligned}
\end{align}
and 
\begin{align}\label{eq:CellProblem:0}
	\begin{aligned}
		-\div( D \nabla_y \zeta_0)) &= \partial_s \Theta \Theta^{-1} &&\textrm{in } \Yptxs \,,
		\\
		(D \nabla_y \zeta_0 + \vGn) \cdot \hat{\nu} &= 0 &&\textrm{on } \Gtxs \,,
		\\
		y &\mapsto \hat{\zeta}_0 &&\textrm{Y-}periodic,
	\end{aligned}
\end{align}

We note that one can identify the solutions of \eqref{eq:CellProblem:I} and \eqref{eq:CellProblem:0} with the solutions of \eqref{eq:CellProblem:I:SubCoord} and \eqref{eq:CellProblem:0:SubCoord}, respectively, via
\begin{align*}
	&\hat{\zeta}_j \txsy = \zeta_j(t,x,s,\psin\txsy) + (\psin\txsy -y) \cdot e_j
	\\
	&\hat{\zeta}_0\txsy = \zeta_0(t,x,s,\psin\txsy) + (\psin\txsy -y) \cdot e_j
\end{align*}
and the effective coefficients and right-hand sides by
\begin{align*}
	\hat{D}^* = \hat{D} \, \qquad 	\hat{V}^* = V^* \, , \qquad \hat{F} = F \, , \qquad \hat{G} = G \, .
\end{align*}

\subsection{Separation of the time variables}
In order to formulate without microscopic time, we use the same substitution as in Section~\ref{subsec:SeparationTimeVariables}. As result, we obtain the homogenised limit problem \eqref{eq:Homogenised},
where the coefficients effective diffusion coefficient $D_\hom$, the velocity of the artificial counterflow $W_\hom$ and the advection velocity $V_\hom$ are given by 
\begin{subequations}\label{eq:HomogenisedCoefficient}
	\begin{align}
		{D}_\hom\tx \coloneqq& \intS \Theta^{-1}\txs {D}^*\txs \dy = \hat{D}_\hom\tx
		\\
		{W}_\hom\tx \coloneqq& -\intS{D}^*\txs \nabla_x (\Theta^{-1})\txs \ds = \hat{W}_\hom\tx
		\\
		{V}_\hom\tx \coloneqq& \intS {V}^*\txs \Theta^{-1}\txs \ds = \hat{V}_\hom\tx
	\end{align}
\end{subequations}

\begin{rem}
	In the case that the porosity is macroscopically constant, i.e.~ ${\nabla_x \Theta =0}$, the separation of the micro- and macroscopic time variables does not cause any artificial advective counter flow. 
	This can be also observed from the fact that ${W_{\hom} \tx = 0}$ if $\nabla_x \Theta = 0$.
	Consequently, one gets 
	\begin{align}
		\partial_t u -\div(D_\hom \nabla u + V_\hom u ) &= F + G && \textrm{in }(0,T) \times \Omega \, ,
		\\
		({D}_\hom \nabla u+ {V}_\hom)u)\cdot n & = 0 && \textrm{on }(0,T) \times \partial \Omega \, ,
		\\
		u(0) &= u^{\init} && \textrm{in } \Omega \, .
	\end{align}
\end{rem}

\begin{rem}
	In the case that $\Theta$ is not oscillating on the time interval $S$, i.e.~${\partial_s \Theta = 0}$, one can also avoid the introduction of the advective counter flow $V_\hom$. Instead, in this case \eqref{eq:PreHomogenised:3} implies that $\un$ is independent of $s \in S$ and one can separates the microscopic time variable in \eqref{eq:PreHomogenised} directly, which leads to 
	\begin{subequations}\label{eq:PreHomogenised:NoPulsTheta}
		\begin{align}\label{eq:PreHomogenised:NoPulsTheta:1}
			\partial_t (\Theta \un) - \div\Bigg(\intS D^*\ds \, \nabla \un - \intS V^* \ds \, \un \Bigg) &= {F} + {G}&& \textrm{in } (0,T) \times \Omega \, ,
			\\\label{eq:PreHomogenised:NoPulsTheta:2}
			\Bigg(\intS D^*\ds \, \nabla \un - \intS V^* \ds \, \un \Bigg) \cdot n &= 0 && \textrm{in } (0,T) \times \partial \Omega \, ,
			\\\label{eq:PreHomogenised:NoPulsTheta:3}
			\un(0) &= \un^{\init} &&\textrm{in } \Omega \,,
		\end{align}
	\end{subequations}
	where the effective diffusion coefficient ${D}^*$, the advection velocity ${V}^*$ as well as the porosity and the bulk source terms $F$ and $G$ are given as above.
\end{rem}

\section{Analysis of the advection velocity $V^*$}\label{sec:Con:V}
In order to understand the new advection velocity $V^*$, we investigate the cell problem \eqref{eq:CellProblem:0} and its solution. In particular, we want to understand for what kind of domain evolutions $V_\hom$ is zero and when the oscillations produce non trivial advection.

\subsection{Oscillating microstructure without producing advection}
Let us assume that the domain evolution undergoes an evolution in the time interval $[0,t_1]$ for a point $t_1 \in S$ and undergoes the same evolution in return on $[t_1 ,1]$ by a possible different speed.
\begin{lemma}\label{lem:ZeroAdvection}
	Let ${\tau : [0,t_1] \to [t_1,1]}$ for $t_1 \in S$ be an bijective Lipschitz continuous function such that
	\begin{align}\label{eq:EvolutionBackwards}
		\Yp(t,x, s) &= \Yp(x,s, \tau(s)) && \textrm{ for } s \in [0,t_1]\,,
		\\
		v_\Gamma(t,x, s) &= v_\Gamma(t,x, \tau(s)) \partial_s \tau(s)^{-1} && \textrm{ for } s \in (0,t_1)\, ,
	\end{align}
	Then, $V_\hom =0$ as well as $\int_S V^*\txs \ds = 0$\,.
\end{lemma}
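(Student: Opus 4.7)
The plan is to establish the pointwise identity $V^{*}(t,x,\tau(s))\,\partial_s\tau(s) = V^{*}(t,x,s)$ for a.e.~$s\in(0,t_1)$, from which both conclusions follow by a single change of variables. Splitting
\begin{align*}
\intS V^{*}(t,x,s)\,\ds = \int_0^{t_1} V^{*}(t,x,s)\,\ds + \int_{t_1}^1 V^{*}(t,x,s)\,\ds
\end{align*}
and substituting $s=\tau(\sigma)$ in the second integral (using $\tau(0)=1$, $\tau(t_1)=t_1$, so $\partial_s\tau<0$), one obtains $\int_{t_1}^1 V^{*}(t,x,s)\,\ds = -\int_0^{t_1} V^{*}(t,x,\tau(\sigma))\,\partial_s\tau(\sigma)\,\mathrm{d}\sigma$, which together with the pointwise identity cancels the first integral. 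Since the hypothesis also forces $\Theta(t,x,\tau(s))=\Theta(t,x,s)$, the same change of variable applied to $\int_S V^{*}\Theta^{-1}\,\ds$ yields $V_{\hom}=0$.

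To prove the pointwise identity, I would introduce the candidate $\tilde\zeta(s,y)\coloneqq \zeta_0(t,x,\tau(s),y)\,\partial_s\tau(s)$ on $\Yptxs = Y^{*}(t,x,\tau(s))$ and verify that it solves the cell problem \eqref{eq:CellProblem:0} at time $s$. Differentiating $\Theta(t,x,s) = \Theta(t,x,\tau(s))$ yields $\partial_s\Theta(t,x,\tau(s)) = \partial_s\Theta(t,x,s)/\partial_s\tau(s)$; multiplying the bulk equation $-\div_y(D\nabla_y\zeta_0(\tau(s))) = \partial_s\Theta(\tau(s))/\Theta(\tau(s))$ by $\partial_s\tau(s)$ therefore reproduces the correct source $\partial_s\Theta(s)/\Theta(s)$ at time $s$. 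On $\Gtxs = \Gamma(t,x,\tau(s))$, the rescaled boundary condition for $\zeta_0(\tau(s))$ reads $(D\nabla_y\tilde\zeta)\cdot\nu = -\partial_s\tau(s)\,v_\Gamma(t,x,\tau(s))\cdot\nu$, and the scaling hypothesis on $v_\Gamma$ is precisely the compatibility condition that rewrites this right-hand side as $-v_\Gamma(t,x,s)\cdot\nu$, giving exactly the boundary condition of \eqref{eq:CellProblem:0} at time $s$.

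By uniqueness (up to an additive, $y$-independent constant) of solutions of \eqref{eq:CellProblem:0}, it then follows that $\nabla_y\tilde\zeta(s,\cdot) = \nabla_y\zeta_0(t,x,s,\cdot)$ on $\Yptxs$. Substituting into the definition \eqref{eq:PreHomogenisedCoeffcients:V} of $V^{*}$ immediately gives the required identity $V^{*}(t,x,\tau(s))\,\partial_s\tau(s) = V^{*}(t,x,s)$.

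The main obstacle is the bookkeeping of the Jacobian $\partial_s\tau$ in both the bulk source and the normal-flux boundary condition of \eqref{eq:CellProblem:0} after the substitution $s\mapsto\tau(s)$: the prescribed scaling of $v_\Gamma$ is exactly the compatibility condition that makes the single rescaling factor $\partial_s\tau(s)$ work simultaneously for the divergence equation and the boundary condition, so that $\tilde\zeta$ really solves the same cell problem as $\zeta_0(t,x,s,\cdot)$. Beyond this, only the standing well-posedness of \eqref{eq:CellProblem:0} (up to constants) and the measurability required for the change-of-variable formula are used.
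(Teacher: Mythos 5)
Your proof is correct and follows essentially the same approach as the paper: split the time integral at $t_1$, change variables $s=\tau(\sigma)$ in the second half, and use the transformation identity for the cell solution $\zeta_0$ to cancel the two halves. What you add (and what the paper only asserts) is the explicit verification, via uniqueness of \eqref{eq:CellProblem:0} up to constants, that $\nabla_y\zeta_0(t,x,s,\cdot)=\partial_s\tau(s)\,\nabla_y\zeta_0(t,x,\tau(s),\cdot)$; this is the right level of rigour.

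One point worth recording: the scaling relation you actually use on the boundary, $v_\Gamma(t,x,s)=v_\Gamma(t,x,\tau(s))\,\partial_s\tau(s)$, carries $\partial_s\tau(s)$ without the inverse, whereas the lemma as stated (and equation \eqref{eq:TrafoSolutionCellProblem} in the paper's proof) write $\partial_s\tau(s)^{-1}$. Your version is the one that is both forced by differentiating $\Yptxs = Y^*(t,x,\tau(s))$ in $s$ and consistent with the chain-rule identity $\partial_s\Theta(t,x,\tau(s))=\partial_s\Theta(t,x,s)/\partial_s\tau(s)$ that makes the bulk equation close; the exponent $-1$ in the statement is a sign-level slip that happens to be invisible in the model case $\tau(s)=1-s$, where $\partial_s\tau=\partial_s\tau^{-1}=-1$. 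You silently corrected it, which is what makes the bookkeeping in the divergence equation and the Neumann condition come out with a single common factor.
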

For instance, if the evolution is symmetric with respect to $t_1 = 0.5$ one has
$\tau(s) = 1-s$, which leads to $\Yp(t,x, s) = \Yp(x,s, 1-s)$ for $s \in [0,t_1]$.
\begin{proof}
	From \eqref{eq:EvolutionBackwards}, one gets
	\begin{align*}
		\Theta\txs &= \Theta(t,x, \tau(s))\, ,
		\qquad
		\partial_s \Theta\txs = \partial_s \Theta(t,x, \tau(s)) \partial_s \tau(s)^{-1}&& \textrm{ for } s \in S\, ,
	\end{align*}
	Consequently, we can identify the solutions of the cell problems \eqref{eq:CellProblem:0} by
	\begin{align}\label{eq:TrafoSolutionCellProblem}
		\zeta_0(t,x, s,y ) &= \zeta_0(t,x,\tau(s),y) \partial_s \tau(s)^{-1} && \textrm{ for } s \in S\, .
	\end{align}
	We use this identification in order to compute $\int_S V^*\txs \ds$.
	We split the time integral at $t_1$ 
	\begin{align*}
		V_\hom\txs &= -\int\limits_{(0,t_1)} \Theta^{-1} \intYptxs D\zeta_0 
		- \int\limits_{(t_1,1)} \Theta^{-1} \intYptxs D \zeta_0\, .
	\end{align*}
	Transforming the second time integral by $\tau$ and using \eqref{eq:TrafoSolutionCellProblem} gives
	\begin{align*}
		\int\limits_{(t_1,1)} \Theta^{-1} \intYptxs D \zeta_0
		&	=
		\int\limits_{(0,t_1)} \Theta^{-1}(t,x,\tau(s)) \int\limits_{Y^*(t,x,\tau(s))} D\zeta_0(t,x,\tau(s),y) |\partial_s \tau(s)|\dy \ds
		\\
		&=
		-\int\limits_{(0,t_1)} \Theta^{-1}(t,x,s) \int\limits_{Y^*(t,x,s)} D\zeta_0(t,x,s,y) \dy \ds
	\end{align*}
	and, thus, $V_\hom = 0$. Analogously, we get $\int_S V^*\txs \ds$.
\end{proof}

The argumentation of the proof of Lemma \ref{lem:ZeroAdvection} can be generalised to more general situation. For instance, if $\Yptxs$ is given by an obstacle which moves periodically in $Y$ without rotating, one can deduce $V_\hom =0$.
However, as soon as there is some rotation of the obstacle or some other evolution one can get an non-trivial advection, which we see in the following.

\subsection{Oscillating microstructure producing advection}
We define the obstacle $\mathrm{S} = (- a,a) \times (- b, +b)$ for $0< a < b \leq 0.1$.
We divide the time interval $S$ in the subintervals $S_1 \coloneqq \left[0,\tfrac{1}{4}\right)$, $S_2\coloneqq \left[\tfrac{1}{4},\tfrac{2}{4}\right)$, $S_3 \coloneqq \left[\tfrac{2}{4},\tfrac{3}{4}\right)$, $S_4 \coloneqq \left[\tfrac{3}{4},\tfrac{4}{4}\right]$.

We consider the following motion of the obstacle $\mathrm{S}$, which is illustrated in Figure~\ref{fig:CellEvolution}. At the start, we assume that the centre $\mathrm{S}$ is positioned at $x_0 = \left(\tfrac{1}{4},\tfrac{1}{2}\right)^\top$
Then, the obstacle moves in the positive $e_1$ direction by $1/2$ during $S_1$. During the second time interval the obstacle rotates by an angle $\pi/2$ around its centre. In the third time interval it moves back in the negative $e_1$ direction by $1/2$. Finally, it rotates back by an angle of $\pi/2$.
\begin{figure}[ht]
	\centering
	\includegraphics[width=1\linewidth]{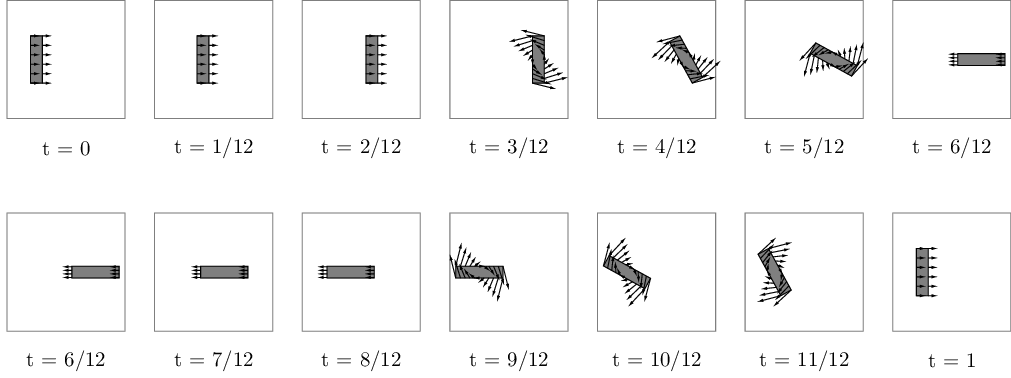}
	\caption{Illustration of an oscillating microstructure over the periodicity time interval $S$ with boundary velocity indicated by arrows. This geometry evolution leads to an advection velocity in the positive $e_1$ direction in the homogenised model}
	\label{fig:CellEvolution}
\end{figure}

We note that during this cell evolution the porosity $\Theta$ is constant. Thus, we have $V_\hom = \Theta(t,x) \int_S \int_{\Yptxs} D \zeta_0\txs \ds$.
In the following we show that this movement causes an advective velocity in the positive $e_1$ direction, i.e.~$\int_S V^*(t,x,s) = \lambda e_1$ for some $\lambda >0$ which depends on $a$ and $b$.

We note that up to a shift of the coordinate system by $\tfrac{1}{2}e_1$, we have
$Y^*(s) = Y^*(\tau(s))$ and $v_\Gamma(s) = -v_\Gamma(\tau(s))$ for $s \in \left(\tfrac{1}{4}, \tfrac{2}{4}\right)$ and $\tau(s) = 1- \left(s-\tfrac{1}{4}\right)$. Note that the shift of the coordinate system is not relevant for the computation of $V^*\txs$. Thus, arguing as in the proof of Lemma \ref{lem:ZeroAdvection}, we obtain
\begin{align*}
	\int\limits_{\left(\tfrac{1}{4}, \tfrac{2}{4}\right)}\intYptxs D \zeta_0 = -\int\limits_{\left(\tfrac{3}{4}, \tfrac{4}{4}\right)} \intYptxs D \zeta_0 \,
\end{align*}
and, thus,
\begin{align}\label{eq:EffectVelocity:Example-IntervallReduction}
	\int\limits_S V^* = -\int\limits_{\left(\tfrac{0}{4}, \tfrac{1}{4}\right)}\intYptxs D \zeta_0 - \int\limits_{\left(\tfrac{2}{4}, \tfrac{3}{4}\right)}\intYptxs D \zeta_0 \,.
\end{align}
For the remaining time intervals, we can insert $v_\Gamma = 2 e_i$ for $s \in\left(\tfrac{0}{4}, \tfrac{1}{4}\right)$ at the interface $\Gamma\txs$ and the cell problem can be simplified to
\begin{align*}
	\begin{aligned}
		-\div( D \nabla_y \zeta_0)) &= 0 &&\textrm{in } \Yptxs \,,
		\\
		(D \nabla_y \zeta_0 + 2 e_i) \cdot \hat{\nu} &= 0 &&\textrm{on } \Gtxs \,,
		\\
		y &\mapsto \hat{\zeta}_0 &&Y-\textrm{periodic},
	\end{aligned}
\end{align*}
One can proceed similarly for $s \in\left(\tfrac{2}{4}, \tfrac{3}{4}\right)$.

By the symmetry of the domain $\Yptxs$ with respect to $\{x_2 = 0.5\}$, one gets 
\begin{align}
	\begin{aligned}\label{eq:Expl-Comp-Coeffcient}
		\intYptxs D \nabla_y \zeta_0 &= -\lambda_1 e_i &&\textrm{for } s \in\left(\tfrac{0}{4}, \tfrac{1}{4}\right) \, ,
		\\
		\intYptxs D \nabla_y \zeta_0 &= \lambda_2 e_i&&\textrm{for } s\in\left(\tfrac{2}{4}, \tfrac{3}{4}\right) 
	\end{aligned}
\end{align} for some $\lambda_1\, ,\, \lambda_2 >0$.
Numerical computations, for fixed parameter $a,b$ with $a<b$ shows that $\lambda_1 > \lambda_2$. 
In order to convince ourselves without numerical computations that $\lambda_1 > \lambda_2$, we consider the case $a = \tfrac{1}{2}$, which is excluded since it leads to clogging, but it allows the explicit analytical computation of a solution. One gets $D\nabla_y\zeta = -2 e_1$ for $s \in\left(\tfrac{0}{4}, \tfrac{1}{4}\right)$ and $D\nabla_y\zeta = 0$ for $s \in\left(\tfrac{2}{4}, \tfrac{3}{4}\right)$. Inserting \eqref{eq:Expl-Comp-Coeffcient} in \eqref{eq:EffectVelocity:Example-IntervallReduction} shows
\begin{align*}
	\intS V^*\txs \ds = \lambda e_1
\end{align*} for $\lambda > 0$.

\bibliographystyle{plain}
\bibliography{pulsating}
\end{document}